\newtheorem{thm}{Theorem}
\newtheorem{prop}[thm]{Proposition}
\newtheorem{lm}[thm]{Lemma}
\newtheorem{ass}[thm]{Assumption}
\newcommand{\revtext}[1]{\textcolor{black}{#1}}
\newcommand{\R}{\mathbb{R}}
\newcommand{\finter}{f^{\mathrm{i}}}
\newcommand{\ginter}{g^{\mathrm{i}}}
\newcommand{\fpert}{f^{\mathrm{p}}}
\newcommand{\gpert}{g^{\mathrm{p}}}
\newcommand{\frel}{f^{\mathrm{r}}}
\newcommand{\grel}{g^{\mathrm{r}}}
\newcommand{\pref}{\theta_{\mathrm{ref}}}
\newcommand{\prefidx}[1]{\theta_{\mathrm{ref}_{#1}}}
\newcommand{\palt}{\theta_{\mathrm{alt}}}
\newcommand{\C}{i \in \{1,\dots,C\}}
\newcommand{\fvlei}{f_{\mathrm{vle,\mathit{i}}}}
\newcommand{\fhold}{f_{\mathrm{holdup}}}
\newcommand{\fhl}{f_{\mathrm{hl}}}
\newcommand{\fhv}{f_{\mathrm{hv}}}
\newcommand{\hli}{h_{\mathrm{l},\mathit{i}}}
\newcommand{\hvi}{h_{\mathrm{v},\mathit{i}}}
\newcommand{\hl}{\bm{h}_{\mathrm{l}}}
\newcommand{\hv}{\bm{h}_{\mathrm{v}}}
\newcommand{\hvapi}{h_{\mathrm{vap},i}}
\newcommand{\nl}{n_{\mathrm{l}}}
\newcommand{\nv}{n_{\mathrm{v}}}
\newcommand{\napp}{n^{\mathrm{app}}}
\newcommand{\xapp}{\bm{x}^{\mathrm{app}}}
\newcommand{\nappinit}{n^{\mathrm{app},0}}
\newcommand{\xappinit}{\bm{x}^{\mathrm{app},0}}
\newcommand{\xist}{\xi^{\mathrm{st}}}
\newcommand{\etast}{\eta^{\mathrm{st}}}
\newcommand{\xiin}{\xi^{\mathrm{in}}}
\newcommand{\etain}{\eta^{\mathrm{in}}}
\newcommand{\tpre}{t_{\mathrm{pre}}}
\newcommand{\tend}{t_{\mathrm{end}}}
\newcommand{\rhol}{\rho_{\mathrm{l}}}
\newcommand{\rhov}{\rho_{\mathrm{v}}}
\newcommand{\cli}{c_{\mathrm{l},\mathit{i}}}
\newcommand{\Tcond}{T^{\mathrm{cond}}}
\newcommand{\xcond}{\bm{x}^{\mathrm{cond}}}
\newcommand{\Psati}{P_{\mathrm{sat},\mathit{i}}}
\newcommand{\aux}{\mathrm{aux}}
\newcommand{\ydef}{\mathrm{ydef}}
\newcommand{\enthdef}{\mathrm{edef}}
\newcommand{\xsum}{\mathrm{xsum}}
\newcommand{\ysum}{\mathrm{ysum}}
\newcommand{\hold}{\mathrm{hold}}
\newcommand{\slack}{\mathrm{slack}}
\newcommand{\aebal}{\mathrm{aebal}}
\newcommand{\devmeangamma}[1]{\Delta^{\mathrm{m}}_{\gamma_{#1}}}
\newcommand{\devmeanT}{\Delta^{\mathrm{m}}_T}
\newcommand{\devworstgamma}[1]{\Delta^{\mathrm{w}}_{\gamma_{#1}}}
\newcommand{\devworstT}{\Delta^{\mathrm{w}}_T}
\newcommand{\devaltx}[3]{\Delta_{x_{#1}^{#2,#3}}}
\newcommand{\devaltgamma}[3]{\Delta_{\gamma_{#1}^{#2, #3}}}
\begin{document}

\begin{frontmatter}

\title{An equation-based batch distillation simulation to evaluate the effect of multiplicities in thermodynamic activity coefficients}

\author[ITWM-address]{Jennifer Werner}
\ead{jennifer.werner@itwm.fraunhofer.de}
\author[ITWM-address]{Jochen Schmid\corref{mycorrespondingauthor}}
\cortext[mycorrespondingauthor]{Corresponding author}
\ead{jochen.schmid@itwm.fraunhofer.de}
\author[CMU-address]{Lorenz T. Biegler}
\ead{lb01@andrew.cmu.edu}
\author[ITWM-address]{Michael Bortz}
\ead{michael.bortz@itwm.fraunhofer.de}
\address[ITWM-address]{Fraunhofer Institute for Industrial Mathematics (ITWM), Kaiserslautern, Germany}
\address[CMU-address]{Chemical Engineering Department, Carnegie Mellon University, Pittsburgh, Pennsylvania, USA}

\begin{abstract}
In this paper, we investigate the influence of multiplicities in activity coefficients on batch distillation processes. In order to do so, we develop a rigorous simulation of batch distillation processes based on the MESH equations. In particular, we propose a novel index reduction method to transform the original index-$2$ system into a well-posed differential-algebraic system of index $1$. With the help of this simulation, we then explore whether the alternative NRTL parameters, which yield indistinguishable activity coefficients and VLE diagrams when compared to the reference data, can produce distinguishable profiles in dynamic simulations. As it turns out, this can  happen in general and we explain the reasons behind the dynamic distinguishability.
\end{abstract}

\begin{keyword}
batch distillation \sep
index reduction \sep
infinite-reflux initialization \sep 
non-random two-liquid model \sep 
vapor-liquid-equilibrium \sep 
multiplicities \sep 
activity coefficients
\end{keyword}



\end{frontmatter}

\section{Introduction}

It is well-known that the parameters of activity coefficient models are not unique (see for example \revtext{\cite{miyahara1970evaluation, silverman1977number, gau2000reliable, tassios1979number, vasiliu2021multiple}}). In fact, for some mixtures, the non-random two-liquid (NRTL) model \revtext{\cite{renon1968local, gmehling2019chemical}} has up to five solutions, as is shown in \cite{werner2023multiplicities}. \revtext{It is also shown in \cite{werner2023multiplicities}} that the resulting alternative NRTL parameters have little to no effect on the vapor-liquid equilibria (VLE). A natural question is what influence these alternative NRTL parameters have on certain chemical-engineering processes based on vapor-liquid equilibria, such as a batch distillation process. Answering this question is the goal of this work. In order to achieve this, we develop a rigorous simulation for batch distillation processes based on a novel index reduction technique.

As is well-known, the rigorous modeling of a batch distillation process relies on dynamic models described by a set of coupled differential and algebraic equations (DAE).
The first rigorous, multi-component batch distillation model based on the assumption of equilibrium stages was developed by \cite{meadows1963multicomponent}. In \cite{distefano1968mathematical}, the model was extended and a computer-based method for solving the equations was developed, which was further improved in \cite{boston1981foundations}. The modeling, simulation, and optimization of batch distillation processes is intensively studied in the literature \revtext{\cite{Doherty1978, VanDongen1985, cervantes1998, cervantes2000, CERVANTES200041, biegler2002, ragunathan2004, eckert2008mathematical, lopez2016rigorous, bortz2019estimating, Mohring2022, qian2023nonlinear}} and the established models are discussed in many textbooks, see for example \cite{seader2006separation, biegler2010nonlinear, biegler1997systematic, king2013separation, diwekar2011batch}.
In most models, the dynamics of the process is described  by the MESH equations, which comprise mass balances, thermodynamic equilibrium conditions, summation conditions over the phase concentrations, and heat balances on each stage of the considered distillation column (see \cite{seader2006separation, biegler2010nonlinear}). It is well-known that these equations constitute a differential-algebraic system of index $2$ \revtext{\cite{gani1992modelling, peng2003dynamic, lopez2016rigorous, qian2023nonlinear}} which, without suitable transformations, cannot be solved by most standard DAE solvers.

Since, on the other hand, index-$1$ systems can be reliably solved by most standard solvers \revtext{\cite{hairer1991ii}}, it is important to transform the MESH equations to a differential-algebraic system of index $1$. A general principle to achieve this \revtext{\cite{kunkel2006differential, bachmann1990methods}} is to differentiate (a subset of) algebraic equations, substitute  (a subset of) differential equations, and delete the corresponding differential equations to obtain a square, index-$1$ DAE system. In most papers on batch distillation simulations~\revtext{\cite{cervantes1998, cervantes2000, CERVANTES200041, biegler2002, ragunathan2004, eckert2008mathematical, qian2023nonlinear}}, the index reduction is achieved by differentiating the vapor summation equation. In the present paper, we propose an index reduction method based on differentiating the liquid summation equation instead. A similar index reduction has also been indicated in~\cite{lopez2016rigorous}, but an important singularity issue seems to have been overlooked there. In the approach suggested here, we avoid singularity issues by suitable perturbations and we rigorously establish the index-$1$ property of the reduced differential-algebraic system we propose. Apart from its conceptual simplicity and transparency, a major advantage of the proposed index reduction approach is that it does not require the differentiation of any vapor-liquid equilibrium model. In the more traditional approaches, by contrast, the partial derivatives of the equilibrium model are needed to even formulate the reduced system equations. In particular, exchanging the vapor-liquid equilibrium model is considerably more difficult for those approaches. 

Another important issue in the solution of differential-algebraic equations in general and of the MESH equations in particular is the initialization with feasible starting values and the computation of consistent initial values. In the present paper, we use an infinite-reflux initialization inspired by \cite{fletcher2000initialising} combined with a boundary-value problem formulation for the transition from infinite refluxes at a pre-initial time instant to the prescribed finite reflux ratio value at the initial time. 
We implemented our index-reduced simulation model and the initialization methods in python using the packages pyomo and pyomo.dae \cite{bynum2021pyomo, hart2011pyomo, Nicholson2018}. 

With this simulation at hand, we then investigate the question of whether the alternative NRTL parameters, which lead to stationarily indistinguishable activity coefficients and VLE diagrams, can yield distinguishable outputs in the dynamic batch distillation. In fact, we show in the study results that there can be significant differences in the dynamic profiles of the liquid concentrations and the activity coefficients when alternative NRTL parameters are used. We discuss these differences in detail and furthermore analyze why these differences occur.

We organize the article as follows. In Section \ref{sec:simulation} we introduce our simulation in detail. Sections~\ref{sec:setting} and~\ref{sec:system-equations} formally define the DAE system describing the batch distillation model. In Section \ref{sec:index-reduction} we introduce our index reduction method, while in Section \ref{sec:initial-value-problem} we detail our initialization method for the index-reduced DAE system. In Section \ref{sec:study}, in turn, we investigate the influence of alternative, stationarily indistinguishable, NRTL parameters on the solutions of the batch distillation model. We present three different examples and give an explanation on the cause of these effects.

\section{Simulation of batch distillation columns}
\label{sec:simulation}

\revtext{
In this section, we begin by recording the setting and the differential-algebraic system equations for our batch distillation simulation (Sections~\ref{sec:setting} and \ref{sec:system-equations}). We then introduce our index reduction approach (Section~\ref{sec:index-reduction}) and our overall solution strategy for the initial-value problem for the index-reduced system (Section~\ref{sec:initial-value-problem}).
}

\subsection{Setting}
\label{sec:setting}

We consider batch distillation processes in distillation columns with $S \ge 2$ stages for multi-component mixtures consisting of $C \ge 2$ components. Stage $1$ is the pot, which is heated by means of a reboiler, and stage $S$ is the head, which is connected to a condenser. 
And from the condenser, in turn, the produced distillate is partly refluxed to the head of the column and partly effluxed into fraction containers. A schematic of the batch distillation columns considered here is depicted in Figure~\ref{fig:distillation_column_schematic}. 
As usual in stage-wise batch distillation modeling, we assume that thermodynamic equilibrium holds on all stages and that  neighboring stages are connected by liquid downstreams and vapor upstreams. \revtext{Also, the composition of each liquid downstream is the same as that of the liquid phase on the stage above, and the composition of each vapor upstream is the same as that of the vapor phase on the stage below.} Specifically, we adopt the following assumptions throughout this paper. 

\begin{ass} \label{ass:setting}
We have
\begin{itemize}
    \item[(i)] a zero vapor holdup on all stages at all times, 
    \item[(ii)] exactly two phases in equilibrium with each other, namely one vapor and one liquid phase, on all stages at all times (no liquid-liquid phase splitting),
    \item[(iii)] the same pressure on all stages (no pressure drop),   
    \item[(iv)] total condensation in the condenser at all times.
\end{itemize}
\end{ass}

In keeping with the zero vapor holdup assumption (Assumption~\ref{ass:setting}~(i)), we do not have to distinguish between the total molar holdup $n^j$ and the liquid molar holdup $\nl^j$ on the stages -- they are assumed to be the same 
\begin{align} \label{eq:zero-vapor-holdup}
n^j = \nl^j + \nv^j = \nl^j \qquad (j \in \{1,\dots,S\}).
\end{align}
Assumption~\ref{ass:setting}~(ii) means that we confine ourselves to vapor-liquid equilibria and do not model vapor-liquid-liquid equilibria for simplicity. In particular, we only need one liquid composition vector $\bm{x}^j$ and one vapor composition vector $\bm{y}^j$ on all stages. 
Assumption~\ref{ass:setting}~(iii), in turn, means that we do not model pressure drop in the column but instead assume one single column pressure
\begin{align}
P = P^j \qquad (j \in \{1,\dots,S\})
\end{align}
on all stages. And finally, the total condensation assumption (Assumption~\ref{ass:setting}~(iv)) means that the vapor upstream from the top-most stage $S$ is completely condensed. In particular, the composition $\xcond$ in the condenser (and thus the composition of the distillate stream) is equal to the vapor composition on stage $S$:
\begin{align}
\xcond = \bm{y}^S.
\end{align}
As a consequence of the above assumptions, the state of the considered distillation process is completely described by the 
variables 
\begin{align} \label{eq:state-variables}
n^j, \quad \bm{x}^j, \quad \bm{y}^j, \quad H^j, \quad T^j, \quad L^j, \quad V^j 
\end{align}
(namely, liquid moles, liquid composition, vapor composition, liquid enthalpy, temperature on stage $j$, liquid downstream from stage $j+1$ to stage $j$, and vapor upstream from stage $j$ to stage $j+1$). In order to influence the trajectories of the state variables, one can prescribe the values of four control variables (as functions of time),
namely the column pressure $P$, the reboiler heat duty $Q$, the condenser temperature $\Tcond$, and the efflux ratio $\epsilon$. Similar to the reflux ratio $R := F/E$, the efflux ratio 
\begin{align}
\epsilon := E/V^S
\end{align}
is defined as the percentage of the vapor stream $V^S$ that is withdrawn from the column. In these relations, 
\begin{align}
E = \epsilon V^S
\qquad \text{and} \qquad
F = L^S = (1-\epsilon) V^S
\end{align}
denote the flowrates of the efflux and reflux streams, respectively. In particular, efflux and reflux ratio are related as follows:
\begin{align} \label{eq:efflux-and-reflux-ratio}
R = \frac{1-\epsilon}{\epsilon} = 1/\epsilon - 1 \in [0,\infty] 
\qquad \text{and} \qquad
\epsilon = \frac{1}{1+R} \in [0,1].
\end{align}

\begin{figure}
	\centering
	\includegraphics[width=0.7\textwidth]{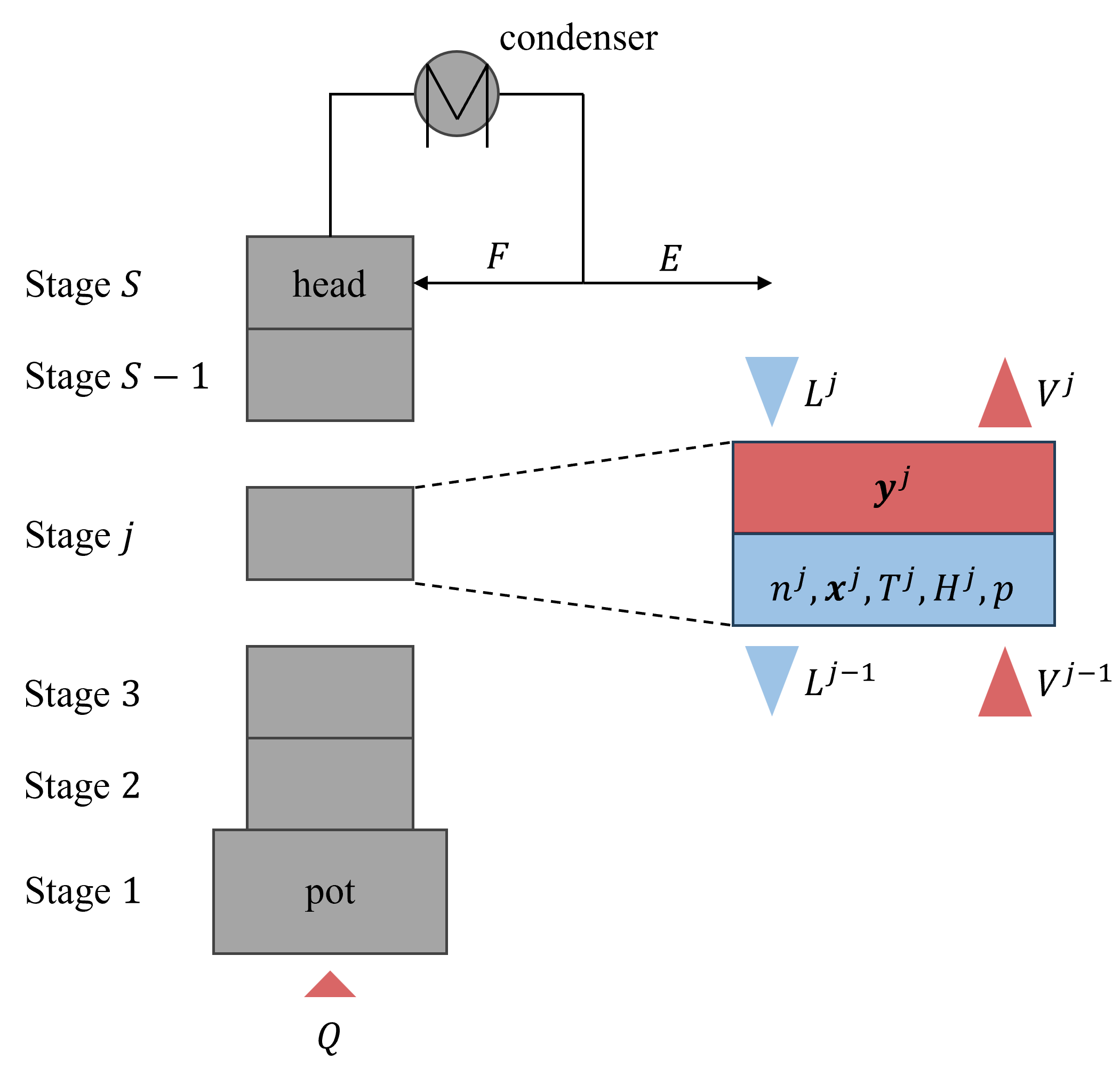}
	\caption{Schematic of the considered batch distillation column}
	\label{fig:distillation_column_schematic}
\end{figure}

\subsection{System equations} 
\label{sec:system-equations}

As usual in stage-wise distillation column modeling, we model the evolution of the batch distillation process 
by means of the mole balances, the enthalpy balances, the thermodynamic equilibrium conditions, and the liquid and vapor summation conditions on every equilibrium stage of the distillation column. In the setting described above, these balance and equilibrium and summation equations can be cast in the form~\eqref{eq:tot-mole,tot-and-modif-comp-mole-balance}-\eqref{eq:holdup,tot-and-modif-comp-mole-balance} below (Proposition~\ref{prop:equivalence-of-basic-and-modified-basic-system}). Specifically, under the assumptions made above, the total mole balances around the column stages take the form
\begin{subequations} \label{eq:tot-mole,tot-and-modif-comp-mole-balance}
	\begin{align}
		\dot{n}^1 & = L^1 - V^1  \label{eq:tot-mole-1,tot-and-modif-comp-mole-balance} \\ 
		\dot{n}^j & = L^j - V^j - L^{j-1} + V^{j-1} \label{eq:tot-mole-j,tot-and-modif-comp-mole-balance} \\
		\dot{n}^S &= -\epsilon V^S - L^{S-1} + V^{S-1}, \label{eq:tot-mole-S,tot-and-modif-comp-mole-balance}   
    \end{align}
\end{subequations}
the component mole balances can be cast in the form 
\begin{subequations} \label{eq:comp-mole,tot-and-modif-comp-mole-balance}
    \begin{align}
    		\dot{x}_i^1 &= \big( L^1 (x_i^2 - x_i^1) -V^1 (y_i^1 - x_i^1) \big)/n^1 \qquad (i\in \{1,\dots,C\})  \label{eq:comp-mole-1,tot-and-modif-comp-mole-balance} \\
        \dot{x}_i^j &= \big( L^j (x_i^{j+1} - x_i^j) - V^j (y_i^j - x_i^j) + V^{j-1} (y_i^{j-1} - x_i^j) \big)/n^j \qquad (i\in \{1,\dots,C\}) \label{eq:comp-mole-j,tot-and-modif-comp-mole-balance} \\
        \dot{x}_i^S &= \big( -\epsilon V^S (y_i^S - x_i^S) + V^{S-1} (y_i^{S-1} - x_i^S) \big)/n^S, \qquad (i\in \{1,\dots,C\}), \label{eq:comp-mole-S,tot-and-modif-comp-mole-balance}
    \end{align}
\end{subequations}
and the enthalpy balances take the form
\begin{subequations} \label{eq:enth,tot-and-modif-comp-mole-balance}
    \begin{align}
    		\dot{H}^1 & = L^1 \fhl(T^2,\bm{x}^2) - V^1 \fhv(T^1,\bm{y}^1) + Q \label{eq:enth-1,tot-and-modif-comp-mole-balance} \\
    		\dot{H}^j & = L^j \fhl(T^{j+1},\bm{x}^{j+1}) - V^j \fhv(T^j,\bm{y}^j) - L^{j-1} \fhl(T^j,\bm{x}^j) + V^{j-1}\fhv(T^{j-1},\bm{y}^{j-1}) \label{eq:enth-j,tot-and-modif-comp-mole-balance} \\
        \dot{H}^S &= (1-\epsilon)V^S \fhl(\Tcond,\bm{y}^S) - V^S \fhv(T^S,\bm{y}^S) \nonumber\\
        &\quad - L^{S-1} \fhl(T^S,\bm{x}^S) + V^{S-1}\fhv(T^{S-1},\bm{y}^{S-1}). \label{eq:enth-S,tot-and-modif-comp-mole-balance} 
    \end{align}
\end{subequations}
Additionally, the summation conditions and the equilibrium- and enthalpy-defining equations take the form
\begin{align}
		\sum_{i=1}^C x_i^j &= 1 \qquad (j \in \{1,\dots,S\}) \label{eq:x-sum,tot-and-modif-comp-mole-balance} \\
        y_i^j &= \fvlei(P,T^j,\bm{x}^j) \qquad (i \in \{1,\dots,C\}, j \in \{1,\dots,S\}) \label{eq:y-def,tot-and-modif-comp-mole-balance} \\
        H^j &= n^j \fhl(T^j,\bm{x}^j) \qquad (j \in \{1,\dots,S\}) \label{eq:enth-def,tot-and-modif-comp-mole-balance}
\end{align}
for every stage $j \in \{1,\dots,S\}$. 
And finally, we have the holdup equations
\begin{align} \label{eq:holdup,tot-and-modif-comp-mole-balance}
		n^j & = \fhold(L^{j-1}), 
\end{align}
which relate the liquid holdup on stage $j \in \{2,\dots,S\}$ to the liquid downstream from that stage.
\revtext{
In the above equations, the models $\fvlei$, $\fhv$, $\fhl$ and $\fhold$ describe, respectively, the vapor-liquid equilibria on the column stages, the vapor and liquid molar enthalpies of the streams in the column, and the liquid holdups on the column stages. All we need, for our solution approach to work, is that these thermodynamic models are explictly defined and continuously differentiable.
\begin{ass} \label{ass:submodels}
We have explicit and continuously differentiable models $\fvlei$, $\fhv$, $\fhl$, and $\fhold$ modeling, respectively,  
\begin{itemize}
\item[(i)] the $i$th vapor mole fraction of a mixture at vapor-liquid equilibrium as a function of $(P,T,\bm{x})$
\item[(ii)] the vapor and liquid molar enthalpies of a stream as a function of $(T,\bm{y})$ and $(T,\bm{x})$, respectively 
\item[(iii)] the liquid holdup on a column stage as a function of the liquid downstream $L$ from that stage.
\end{itemize}
\end{ass}
\noindent In our simulation examples, we define $\fvlei$ in terms of Antoine's and Raoult's extended laws and the non-random two-liquid model (\ref{app:vle}). Also, $\fhl$ and $\fhv$ we define in terms of the liquid heat capacities of the pure components and of the Clausius-Clapeyron approximation (\ref{app:molar_liquid_vapor_enthalpies}). And for $\fhold$, in turn, we use a simple proportionality relation here (\ref{app:holdup}). 
}

It should be noticed that the system equations~\eqref{eq:tot-mole,tot-and-modif-comp-mole-balance}-\eqref{eq:holdup,tot-and-modif-comp-mole-balance} do not contain a vapor summation condition explicitly, but only the liquid summation condition~\eqref{eq:x-sum,tot-and-modif-comp-mole-balance}. 
It is well-known~\cite{seader2006separation} and straightforward to verify that the vapor summation condition
\begin{align}  \label{eq:y-sum,tot-and-modif-comp-mole-balance}
	\sum_{i=1}^C y_i^j &= 1 \qquad (j \in \{1,\dots,S\})
\end{align}
automatically follows from the other equations, namely from the component mole balance equations~\eqref{eq:comp-mole,tot-and-modif-comp-mole-balance} in conjunction with the total mole balance and the liquid summation equations~\eqref{eq:tot-mole,tot-and-modif-comp-mole-balance} and~\eqref{eq:x-sum,tot-and-modif-comp-mole-balance} (Proposition~\ref{prop:equivalence-of-basic-and-modified-basic-system}).  

It should also be noticed that the system equations~\eqref{eq:tot-mole,tot-and-modif-comp-mole-balance}-\eqref{eq:holdup,tot-and-modif-comp-mole-balance} are differential-algebraic equations of a particularly simple form. Specifically, they consist of ordinary differential equations~\eqref{eq:tot-mole,tot-and-modif-comp-mole-balance}-\eqref{eq:enth,tot-and-modif-comp-mole-balance} for the differential variables 
\begin{align} \label{eq:diff-variables,tot-and-modif-comp-mole-balance}
n^j, \quad \bm{x}^j, \quad H^j
\qquad (j \in \{1,\dots,S\})
\end{align}
and of algebraic equations~\eqref{eq:x-sum,tot-and-modif-comp-mole-balance}-\eqref{eq:holdup,tot-and-modif-comp-mole-balance} for the algebraic variables 
\begin{align} \label{eq:alg-variables,tot-and-modif-comp-mole-balance}
V^j,\quad \bm{y}^j, \quad T^j \qquad (j \in \{1,\dots,S\}) 
\qquad \text{and} \qquad
L^j \qquad (j \in \{1,\dots,S-1\}).
\end{align}
In short, the system equations~\eqref{eq:tot-mole,tot-and-modif-comp-mole-balance}-\eqref{eq:holdup,tot-and-modif-comp-mole-balance} are differential-algebraic equations of the following semi-explicit form:
\begin{align}
\dot{\xi} &= f(\xi,\eta, u) \label{eq:ode} \\
0 &= g(\xi,\eta, u) \label{eq:alg}
\end{align}
where $\xi$ denotes the vector of differential variables and $\eta$ denotes the vector of algebraic variables. 
(As usual, a state variable is called a differential variable of a differential-algebraic system iff it shows up with a time derivative somewhere in the differential-algebraic system. A state variable is called algebraic iff it is not differential.) \revtext{Also, 
\begin{align}
u := (\epsilon, P, Q, \Tcond)
\end{align}
denotes the vector of control variables.} 
If the Jacobian $D_\eta g(\xi,\eta,u)$ of the algebraic equations w.r.t.~the algebraic variables was invertible along solutions $t \mapsto (\xi(t),\eta(t))$ of~\eqref{eq:ode}-\eqref{eq:alg}, then the system~\eqref{eq:ode}-\eqref{eq:alg} would be of (differentiation) index $1$ and could therefore be solved efficiently and stably by many standard solvers \revtext{\cite{hairer1991ii}}. (Indeed, if the matrix $D_\eta g(\xi(t),\eta(t),u(t))$ was invertible for every $t$ in the solution interval $I$, then we could reduce the system~\eqref{eq:ode}-\eqref{eq:alg} to an ordinary differential equation for the entire state vector $\zeta = (\xi,\eta)$ simply by differentiating the algebraic equations~\eqref{eq:alg} once along the solution $t \mapsto (\xi(t),\eta(t))$ and by then exploiting the assumed invertibility.) Yet, for the batch distillation equations~\eqref{eq:tot-mole,tot-and-modif-comp-mole-balance}-\eqref{eq:holdup,tot-and-modif-comp-mole-balance}, the Jacobian $D_\eta g(\xi,\eta,u)$ of the algebraic equations w.r.t.~the algebraic variables is glaringly non-invertible \revtext{(Figure~\ref{fig:incidence-matrix-not-index-reduced-for-S=3}). In fact, this Jacobian has} 
\begin{itemize}
\item \revtext{$S$ zero rows because the liquid summation equations~\eqref{eq:x-sum,tot-and-modif-comp-mole-balance} do not depend on any of the algebraic variables~\eqref{eq:alg-variables,tot-and-modif-comp-mole-balance}, and}
\item \revtext{$S$ zero columns because the algebraic vapor stream variables do not enter any of the algebraic equations~\eqref{eq:x-sum,tot-and-modif-comp-mole-balance}-\eqref{eq:holdup,tot-and-modif-comp-mole-balance}.}
\end{itemize}
Consequently, the batch distillation system equations~\eqref{eq:tot-mole,tot-and-modif-comp-mole-balance}-\eqref{eq:holdup,tot-and-modif-comp-mole-balance} are of index $2$ and not amenable to standard solvers \revtext{\cite{hairer1991ii}}.

\begin{figure}
	\centering
	\includegraphics[width=0.5\textwidth]{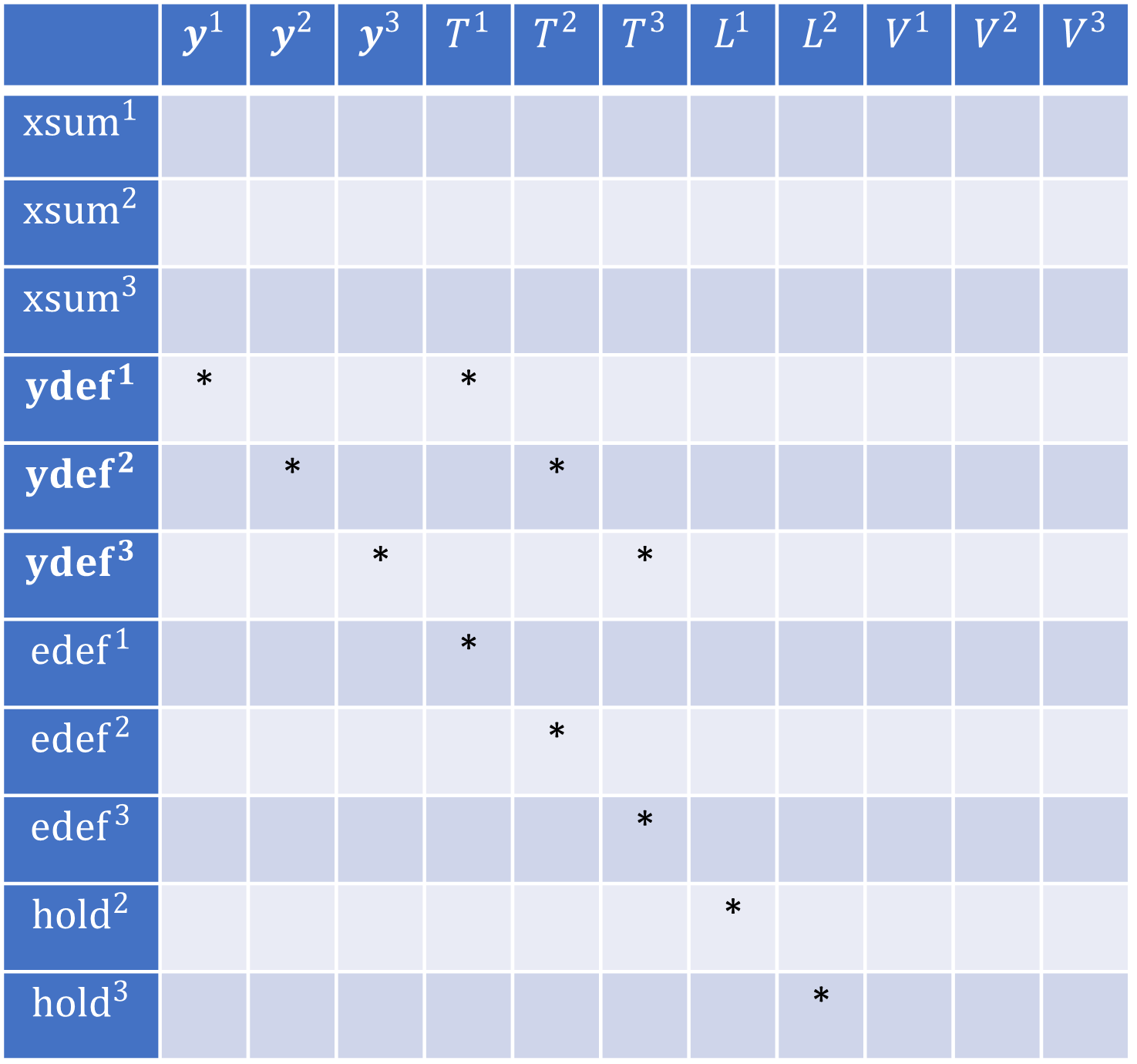}
	\caption{Structure of the Jacobian $D_\eta g(\zeta)$ of the algebraic equations of~\eqref{eq:tot-mole,tot-and-modif-comp-mole-balance}-\eqref{eq:holdup,tot-and-modif-comp-mole-balance} w.r.t.~the algebraic variables in the special case of $S=3$ stages. An empty cell stands for a zero entry or block of that matrix, while a cell with an asterisk stands for a generally non-zero entry or block}
	\label{fig:incidence-matrix-not-index-reduced-for-S=3}
\end{figure}

\subsection{Index reduction}
\label{sec:index-reduction}

In order to stably and efficiently solve the index-$2$ differential-algebraic system equations~\eqref{eq:tot-mole,tot-and-modif-comp-mole-balance}-\eqref{eq:holdup,tot-and-modif-comp-mole-balance}, it is important to turn them into a semi-explicit system of index $1$, that is, a system of the form~\eqref{eq:ode}-\eqref{eq:alg} such that the Jacobian $D_\eta g(\xi,\eta,u)$ becomes invertible along solutions of the system equations \revtext{\cite{hairer1991ii}}. A general possible procedure to achieve this is to differentiate a suitable subset of the algebraic equations and to substitute a subset of the differential equations \revtext{\cite{kunkel2006differential, bachmann1990methods}}. In this manner, one obtains new algebraic equations and new algebraic variables and, when chosen appropriately, these can then remedy the singularity of the original index-$2$ system.

In the literature on batch distillation simulations, there essentially exist two approaches to reduce the index to $1$, which are based on differentiating the vapor summation equations or, respectively,  the liquid summation equations. \revtext{To the best of our knowledge, the first approach was initially introduced in \cite{cervantes1998}. It has since been frequently used in the literature~\cite{cervantes2000, CERVANTES200041, biegler2002, ragunathan2004, eckert2008mathematical, qian2023nonlinear}, whereas} the second approach seems to be much less used. In fact, we found only one reference~\cite{lopez2016rigorous} where this second approach is indicated. In that paper, however, an important singularity issue seems to have been overlooked. At least, this singularity issue is not addressed there at all and, as it stands, the model proposed there is actually still of index $2$ (by the very same arguments as for Proposition~\ref{prop:singularity-of-unperturbed-system} below).      

In the present paper, we propose an index reduction method that is based on differentiating the liquid summation equations as well, but steers clear of the singularity issue in~\cite{lopez2016rigorous}. Specifically, our approach proceeds as follows. 
\revtext{In order to get rid of the zero-row problem (first bullet point at the end of Section~\ref{sec:system-equations}), we drop the component mole balance equations~\eqref{eq:comp-mole,tot-and-modif-comp-mole-balance} for component $C$. As a consequence, the liquid mole fractions $x_C^1, \dots, x_C^S$ are no longer differential variables, but they become algebraic variables and the derivatives of the liquid summation equations~\eqref{eq:x-sum,tot-and-modif-comp-mole-balance} w.r.t.~these new algebraic variables become non-zero. In particular, this fixes the zero-row problem in Figure~\ref{fig:incidence-matrix-not-index-reduced-for-S=3}. In return for the new algebraic variables $x_C^1, \dots, x_C^S$, however, we have to add $S$ new algebraic equations  because otherwise the Jacobian of the algebraic equations w.r.t.~the algebraic variables is no longer a square matrix. In our approach, we obtain these new algebraic equations by differentiating the liquid summation equations~\eqref{eq:x-sum,tot-and-modif-comp-mole-balance} and by inserting, into the resulting equations, the right-hand sides of the component mole balance equations~\eqref{eq:comp-mole,tot-and-modif-comp-mole-balance}.} 
In this manner, we arrive at $S$ new algebraic equations, namely
\begin{subequations} \label{eq:aux-alg-eq,x-sum-differentiation}
	\begin{align}
		\sum_{i=1}^C \big( L^1 (x_i^2 - x_i^1) -V^1 (y_i^1 - x_i^1)\big)/n^1 &= 0 \label{eq:aux-alg-eq-1,x-sum-differentiation} \\
		\sum_{i=1}^C \big( L^j (x_i^{j+1} - x_i^j) - V^j (y_i^j - x_i^j) + V^{j-1} (y_i^{j-1} - x_i^j) \big)/n^j &= 0
\label{eq:aux-alg-eq-j,x-sum-differentiation} \\
		\sum_{i=1}^C \big( \epsilon V^S (x_i^S - y_i^S) + V^{S-1} (y_i^{S-1} - x_i^S) \big)/n^S &= 0 \label{eq:aux-alg-eq-S,x-sum-differentiation}
	\end{align}
\end{subequations}
Summarizing, we arrive at an intermediary system of equations which differs from the original system~\eqref{eq:tot-mole,tot-and-modif-comp-mole-balance}-\eqref{eq:holdup,tot-and-modif-comp-mole-balance} only in that the component mole balance equations are now imposed only for the first $C-1$ components and that it features the new algebraic equations~\eqref{eq:aux-alg-eq,x-sum-differentiation}. \revtext{In short, the intermediary system can be written as a semi-explicit differential-algebraic system
\begin{align}
\dot{\xi} &= \finter(\xi,\eta,u) \label{eq:ode,intermediary}\\
0 &= \ginter(\xi,\eta,u) \label{eq:alg,intermediary}
\end{align}
with differential variables $\xi$ given by~\eqref{eq:diff-variables,tot-and-modif-comp-mole-balance} except $x_C^1, \dots, x_C^S$ and with algebraic variables $\eta$ given by~\eqref{eq:alg-variables,tot-and-modif-comp-mole-balance} plus $x_C^1, \dots, x_C^S$.}  
It is straightforward to see that this intermediary system is equivalent to the original system (Proposition~\ref{prop:equivalence-of-modified-basic-and-unperturbed-system}). 
\revtext{Also, the vapor stream variables $V^1, \dots, V^S$ now explicitly enter the new algebraic equations~\eqref{eq:aux-alg-eq,x-sum-differentiation}, so that the zero-column problem (second bullet point at the end of Section~\ref{sec:system-equations}) is addressed at least at first glance. At a closer look, however, the zero-column problem persists along solutions (Proposition~\ref{prop:singularity-of-unperturbed-system}) and thus} the intermediary system is still singular along solutions. \revtext{We therefore have to regularize the intermediary system, and we propose two approaches to do so: a perturbation-based approach using suitable perturbation parameters (Section~\ref{sec:perturbation-parameter-approach}) and a relaxation-based approach using suitable slack variables (Section~\ref{sec:slack-variable-approach}). In our implementation, both regularization approaches come into play: the first one in the simulation phase and the second one in the initialization phase (Section~\ref{sec:initial-value-problem}).} 

\subsubsection{A perturbation-based approach}
\label{sec:perturbation-parameter-approach}

\revtext{In our first regularization approach, we suitably perturb some of the equations of the intermediary system, using suitable perturbation parameters $\delta_i^j \in [0,\infty)$ for $i \in \{1,\dots,C\}$ and $j \in \{1,\dots,S\}$. Specifically, we perturb the component mole balance equations and the new algebraic equations of the intermediary system~\eqref{eq:ode,intermediary}-\eqref{eq:alg,intermediary} in a physically motivated fashion (\ref{app:physical-motivation}), based on} a suitable relaxation of our zero vapor holdup assumption (Assumption~\ref{ass:setting}~(i)). In this manner, we arrive at the following perturbed system~\eqref{eq:tot-mole,x-sum-differentiation-perturbed}-\eqref{eq:aux-alg-eq,x-sum-differentiation-perturbed}. It consists of the unperturbed total mole balances 
\begin{subequations} \label{eq:tot-mole,x-sum-differentiation-perturbed}
	\begin{align}
		\dot{n}^1 & = L^1 - V^1  \label{eq:tot-mole-1,x-sum-differentiation-perturbed} \\ 
		\dot{n}^j & = L^j - V^j - L^{j-1} + V^{j-1} \label{eq:tot-mole-j,x-sum-differentiation-perturbed} \\
		\dot{n}^S &= -\epsilon V^S - L^{S-1} + V^{S-1}, \label{eq:tot-mole-S,x-sum-differentiation-perturbed}     
    \end{align}
\end{subequations}
the following perturbed component mole balances 
\begin{subequations} \label{eq:comp-mole,x-sum-differentiation-perturbed}
    \begin{align}
		\dot{x}_i^1 & = \big( L^1 (x_i^2 - x_i^1 - \delta_i^1) -V^1 (y_i^1 - x_i^1 - \delta_i^1)\big)/n^1 \qquad (i\in \{1,\dots,C-1\})  \label{eq:comp-mole-1,x-sum-differentiation-perturbed} \\        
        \dot{x}_i^j & = \big( L^j (x_i^{j+1} - x_i^j - \delta_i^j) - V^j (y_i^j - x_i^j - \delta_i^j) + V^{j-1} (y_i^{j-1} - x_i^j - \delta_i^j) \big)/n^j \qquad (i\in \{1,\dots,C-1\}) \label{eq:comp-mole-j,x-sum-differentiation-perturbed} \\
        \dot{x}_i^S &= \big( \epsilon V^S (x_i^S + \delta_i^S - y_i^S) + V^{S-1} (y_i^{S-1} - x_i^S - \delta_i^S) \big)/n^S \qquad (i\in \{1,\dots,C-1\}), \label{eq:comp-mole-S,x-sum-differentiation-perturbed}
    \end{align}
\end{subequations}
and the unperturbed enthalpy balances
\begin{subequations} \label{eq:enth,x-sum-differentiation-perturbed}
    \begin{align} 
    		\dot{H}^1 & = L^1 \fhl(T^2,\bm{x}^2) - V^1 \fhv(T^1,\bm{y}^1) + Q \label{eq:enth-1,x-sum-differentiation-perturbed} \\
    		\dot{H}^j & = L^j \fhl(T^{j+1},\bm{x}^{j+1}) - V^j \fhv(T^j,\bm{y}^j) - L^{j-1} \fhl(T^j,\bm{x}^j) + V^{j-1}\fhv(T^{j-1},\bm{y}^{j-1}) \label{eq:enth-j,x-sum-differentiation-perturbed} \\
        \dot{H}^S &= (1-\epsilon)V^S \fhl(\Tcond,\bm{y}^S) - V^S \fhv(T^S,\bm{y}^S) \nonumber\\
        &\quad - L^{S-1} \fhl(T^S,\bm{x}^S) + V^{S-1}\fhv(T^{S-1},\bm{y}^{S-1}). \label{eq:enth-S,x-sum-differentiation-perturbed} 
    \end{align}
\end{subequations}
Additionally, it includes the liquid summation conditions, the equilibrium- and enthalpy-defining equations, and the holdup equations 
\begin{align}
		x_C^j &= 1 - \sum_{i=1}^{C-1} x_i^j \qquad (j \in \{1,\dots,S\}) \label{eq:x-sum,x-sum-differentiation-perturbed} \\
        y_i^j &= \fvlei(P,T^j,\bm{x}^j) \qquad (i \in \{1,\dots,C\}, j \in \{1,\dots,S\}) \label{eq:y-def,x-sum-differentiation-perturbed} \\
        H^j &= n^j \fhl(T^j,\bm{x}^j) \qquad (j \in \{1,\dots,S\}) \label{eq:enth-def,x-sum-differentiation-perturbed} \\
        n^j & = \fhold(L^{j-1}) \qquad (j \in \{2,\dots,S\}) \label{eq:holdup-j,x-sum-differentiation-perturbed}
\end{align} 
in unperturbed form. And finally, the perturbed system features the new algebraic equations~\eqref{eq:aux-alg-eq,x-sum-differentiation} introduced in the intermediary system in the following perturbed form:
\begin{subequations} \label{eq:aux-alg-eq,x-sum-differentiation-perturbed}
	\begin{align}
		\sum_{i=1}^C \big( L^1 (x_i^2 - x_i^1 - \delta_i^1) -V^1 (y_i^1 - x_i^1 - \delta_i^1)\big)/n^1 &= 0 \label{eq:aux-alg-eq-1,x-sum-differentiation-perturbed} \\
		\sum_{i=1}^C \big( L^j (x_i^{j+1} - x_i^j - \delta_i^j) - V^j (y_i^j - x_i^j - \delta_i^j) + V^{j-1} (y_i^{j-1} - x_i^j - \delta_i^j) \big)/n^j &= 0
\label{eq:aux-alg-eq-j,x-sum-differentiation-perturbed} \\
		\sum_{i=1}^C \big( \epsilon V^S (x_i^S + \delta_i^S - y_i^S) + V^{S-1} (y_i^{S-1} - x_i^S - \delta_i^S) \big)/n^S &= 0. \label{eq:aux-alg-eq-S,x-sum-differentiation-perturbed}
	\end{align}
\end{subequations}
\revtext{Clearly, this perturbed system~\eqref{eq:tot-mole,x-sum-differentiation-perturbed}-\eqref{eq:aux-alg-eq,x-sum-differentiation-perturbed} is again a semi-explicit differential-algebraic system
\begin{align}
\dot{\xi} &= \fpert_\delta(\xi,\eta,u) \label{eq:ode,perturbed}\\
0 &= \gpert_\delta(\xi,\eta,u) \label{eq:alg,perturbed}
\end{align}
whose differential variables $\xi$ and algebraic variables $\eta$ are the same as for the intermediary system~\eqref{eq:ode,intermediary}-\eqref{eq:alg,intermediary}. Also, the perturbed system reduces to the intermediary system in the special case where the perturbation parameters $\delta = (\delta_i^j)$ are chosen to be all zero.}
As shown in Proposition~\ref{prop:regularity-of-perturbed-system}, our perturbed system~\eqref{eq:tot-mole,x-sum-differentiation-perturbed}-\eqref{eq:aux-alg-eq,x-sum-differentiation-perturbed} is indeed non-singular along its solution trajectories if, for instance, the perturbation parameters $\delta_i^j$ are chosen to be all positive and independent of the stage: 
\begin{align}
    \delta_i^1 = \dotsb = \delta_i^S > 0 \qquad (i \in \{1,\dots,C\})
\end{align}
or, more generally, to be increasing with the stage index $j$ as in~\eqref{eq:perturbations-increasingly-ordered}. 
Consequently, under these assumptions, the perturbed system is of index $1$ and can thus be stably and efficiently solved with standard solvers. It should be noticed, however, that the perturbed system is no longer exactly equivalent to the original system equations~\eqref{eq:tot-mole,tot-and-modif-comp-mole-balance}-\eqref{eq:holdup,tot-and-modif-comp-mole-balance}. In fact, along the solutions of the perturbed system, the vapor summation equations~\eqref{eq:y-sum,tot-and-modif-comp-mole-balance} are violated by an amount proportional to the perturbation parameter. Specifically, the vapor summation equations along solutions become~\eqref{eq:ysum-perturbed}.  
As long as the perturbation parameters are chosen to be small, however, the violation of the vapor summation equation is negligible. 

\subsubsection{A relaxation-based approach} 
\label{sec:slack-variable-approach}

\revtext{In our second regularization approach, we suitably relax the vapor summation equations implicit in the intermediary system, 
using suitable slack variables. Specifically, we add to the intermediary system~\eqref{eq:ode,intermediary}-\eqref{eq:alg,intermediary} the following relaxed vapor summation equations
\begin{align} \label{eq:ysum,relaxed}
\sum_{i=1}^C y_i^j = 1 + s^j \qquad (j \in \{1,\dots,S\})
\end{align}
containing $S$ additional algebraic slack variables $s^1, \dots, s^S \in \R$. In this manner, we arrive at a relaxed system consisting of the equations of the intermediary system and the additional algebraic equations~\eqref{eq:ysum,relaxed}. Clearly, this relaxed system is again a semi-explicit differential-algebraic system
\begin{align}
\dot{\xi} &= \frel(\xi,\eta,s,u) \label{eq:ode,relaxed}\\
0 &= \grel(\xi,\eta,s,u) \label{eq:alg,relaxed}
\end{align} 
whose differential variables $\xi$ are the same as for the intermediary system and whose algebraic variables $(\eta,s)$ are those of the intermediary system plus the newly introduced slack variables $s = (s^1,\dots,s^S)$. As shown in Proposition~\ref{prop:regularity-of-relaxed-system}, our relaxed system~\eqref{eq:ode,relaxed}-\eqref{eq:alg,relaxed} is indeed non-singular along  its solution trajectories if the slack variables do not vanish
\begin{align}
s^1(t), \dots, s^S(t) \ne 0
\end{align}
for any $t$ in the solution interval. Consequently, under these assumptions, the relaxed system is of index $1$ and can thus be stably and efficiently solved with standard solvers. 
}

\subsubsection{Comparison with the traditional index-reduction approach} 
\label{sec:comparison-with-traditional-approach}

\revtext{
Compared to the traditional index-reduction approach from the literature (\ref{app:index-reduction-y-sum-differentiation}), our approach offers several remarkable advantages.
}

\revtext{
A clear practical advantage of our approach is that the resulting perturbed and relaxed system equations~\eqref{eq:ode,perturbed}-\eqref{eq:alg,perturbed} and \eqref{eq:ode,relaxed}-\eqref{eq:alg,relaxed} are both  considerably simpler than the alternative system equations~\eqref{eq:tot-mole,y-sum-differentiation}-\eqref{eq:vlesum,y-differentiation} resulting from the traditional approach. Just compare the enthalpy balance equations~\eqref{eq:enth,x-sum-differentiation-perturbed} of our perturbed and relaxed systems with the fairly complicated algebraicized enthalpy balance equations~\eqref{eq:enth,y-sum-differentiation} of the alternative system. In particular, these algebraicized enthalpy balances contain partial derivatives of the  thermodynamic submodels $\fvlei$ and $\fhl$ via~\eqref{eq:a(P,T,x)-def} and~\eqref{eq:b(P,T,x)-def}, whereas our equations only contain the submodels themselves. As a consequence, exchanging these submodels 
is considerably simpler in our approach. 
}

\revtext{
A clear theoretical advantage of our approach is that the conditions under which the resulting perturbed and relaxed system equations~\eqref{eq:ode,perturbed}-\eqref{eq:alg,perturbed} and \eqref{eq:ode,relaxed}-\eqref{eq:alg,relaxed} are non-singular, are very transparent and natural. See the remarks after Proposition~\ref{prop:regularity-of-perturbed-system}. In contrast, the conditions under which the alternative system equations~\eqref{eq:tot-mole,y-sum-differentiation}-\eqref{eq:vlesum,y-differentiation} are non-singular, 
are fairly complicated and intransparent. And, as a consequence, it is not so clear when exactly the traditional index-reduction approach really results in an index-reduced system. See the remarks after Proposition~\ref{prop:regularity-of-alternative-system}. 
}

\revtext{
An apparent disadvantage of our approach is that the perturbed and relaxed system equations~\eqref{eq:ode,perturbed}-\eqref{eq:alg,perturbed} and \eqref{eq:ode,relaxed}-\eqref{eq:alg,relaxed} are not exactly equivalent to the original system equations because the vapor summation equations are slightly violated. As has already been pointed out above, though, this violation is negligible when the perturbation parameters are chosen small, for example, as in~\eqref{eq:perturbation-parameter-choice}. See \ref{app:satisfaction-of-vapor-summation-equation}. 
}

\subsection{Initial-value problem} \label{sec:initial-value-problem}

\revtext{
In this section, we explain how we solve the initial-value problem for the perturbed system~\eqref{eq:ode,perturbed}-\eqref{eq:alg,perturbed} with initial conditions on the total apparatus holdup $\napp$ and on the total apparatus composition $\xapp$, which are defined by
\begin{align}
\napp := n^1 + \dotsb + n^S
\qquad \text{and} \qquad
\xapp := (n^1 \bm{x}^1 + \dotsb + n^S \bm{x}^S)/\napp.
\end{align}
Specifically, we explain our solution strategy for the differential-algebraic initial-value problem
\begin{align}
\dot{\xi} &= \fpert_\delta(\xi,\eta,u) \label{eq:ivp-ode,perturbed}\\
0 &= \gpert_\delta(\xi,\eta,u) \label{eq:ivp-alg,perturbed}
\end{align}
on the time horizon $[0,\tend]$ with initial conditions
\begin{align} \label{eq:ivp-initial-conditions,apparatus}
\napp(0) = \nappinit \qquad \text{and} \qquad \xapp(0) = \xappinit.
\end{align}
In particular, we explain our initialization strategy, 
that is, our strategy of computing good initial guesses for the state variables $\xi$ and $\eta$ of this initial-value problem~\eqref{eq:ivp-ode,perturbed}-\eqref{eq:ivp-initial-conditions,apparatus}. As is well-known~\cite{ascher1998computer, brenan1995numerical, biegler2010nonlinear, may2022optimal, grossmann2005optimal}, good initial guesses are decisive for the robust numerical solution of differential-algebraic initial-value problems. 
In essence, our initialization strategy consists of two parts: a stationary initialization with infinite reflux (Section~\ref{sec:stationary-initialization}) inspired by \cite{fletcher2000initialising} and an instationary initialization with finite reflux (Section~\ref{sec:instationary-initialization}). As is clear from~\eqref{eq:efflux-and-reflux-ratio}, infinite reflux $R = \infty$ is equivalent to zero efflux $\epsilon = 0$. 
} 

\subsubsection{Stationary initialization with infinite reflux}
\label{sec:stationary-initialization}

\color{black}
In our stationary initialization, we compute 
a stationary solution of the intermediary unperturbed system~\eqref{eq:ode,intermediary}-\eqref{eq:alg,intermediary} with initial condition~\eqref{eq:ivp-initial-conditions,apparatus} and with constant control input
\begin{align} \label{eq:u-const-u_0}
u \equiv u_0 := (0,P_0,Q_0,\Tcond_0)
\end{align}
featuring a zero efflux (infinite reflux) value and the values $(P_0,Q_0,\Tcond_0) := (P(0),Q(0),\Tcond(0))$ of the other control inputs prescribed at the beginning $0$ of the simulation phase. In other words, we compute a solution of the stationary system
\begin{gather}
0 = \fpert_0(\xi,\eta,u_0) \label{eq:ode,stationary-unperturbed}\\
0 = \gpert_0(\xi,\eta,u_0) \label{eq:alg,stationary-unperturbed}\\
\napp(0) = \nappinit \qquad \text{and} \qquad \xapp(0) = \xappinit \label{eq:initial-conditions,apparatus,stationary-unperturbed}
\end{gather}
with the constant infinite-reflux control input $u_0$ defined above. In order to do so, we use two loops:
\begin{itemize}
\item In an inner loop, we compute, for every given pot composition candidate $\bm{x}^1$, a solution to the stationary system~\eqref{eq:ode,stationary-unperturbed}-\eqref{eq:alg,stationary-unperturbed} without the extra conditions~\eqref{eq:initial-conditions,apparatus,stationary-unperturbed}.
\item In an outer loop, we iteratively adapt the pot composition candidate $\bm{x}^1$ such that the solution to~\eqref{eq:ode,stationary-unperturbed}-\eqref{eq:alg,stationary-unperturbed} computed in the inner loop also satisfies the extra conditions~\eqref{eq:initial-conditions,apparatus,stationary-unperturbed}.  
\end{itemize}
In detail, the inner and outer loops are described in Algorithms~\ref{alg:inner-loop} and~\ref{alg:outer-loop}. See also \ref{app:infinite-reflux-initialization}. Algorithm~\ref{alg:inner-loop} is well-defined as soon as all the denominators
\begin{align} \label{eq:inner-algo-well-defined}
\fhl(T^{j+1},\bm{y}^j) - \fhv(T^j,\bm{y}^j) \ne 0
\end{align}
in the computation formulas for the vapor streams are non-zero. As explained after Proposition~\ref{prop:output-of-inner-algo-is-stationary-solution}, this is the case under very natural conditions and thus poses no real restrictions. 
\color{black}

\begin{algorithm}
\caption{Inner loop of the stationary initialization} \label{alg:inner-loop}
\revtext{
\textbf{Input:} $\bm{x}^1$ and $P_0, Q_0, \Tcond_0 =: T^{S+1}$
} 
\begin{algorithmic}
\For{$j \in \{1,\dots,S\}$}
\State Compute bubble-point temperature $T^j$ by solving $\sum_{i=1}^C \fvlei(P_0,T^j,\bm{x}^j) = 1$
\State $y_i^j \gets \fvlei(P_0,T^j,\bm{x}^j)$ for all $i \in \{1,\dots,C\}$
\State $\bm{x}^{j+1} \gets \bm{y}^j$
\EndFor
\For{$j \in \{1,\dots,S\}$}
\State Compute the flow rates $V^j$ and $L^j$ by solving the enthalpy balance equation
\If{$j = 1$}
\State \revtext{$V^1 \gets -\frac{Q_0}{\fhl(T^2,\bm{y}^1) - \fhv(T^1,\bm{y}^1)}$}
\Else
\State \revtext{$V^j \gets V^{j-1} \frac{\fhl(T^j,\bm{y}^{j-1}) - \fhv(T^{j-1},\bm{y}^{j-1})}{\fhl(T^{j+1},\bm{y}^j) - \fhv(T^j,\bm{y}^j)}$}
\EndIf
\State \revtext{$L^j \gets V^j$}
\EndFor
\For{$j \in \{2,\dots,S\}$}
\State Compute the stage holdup $n^j$ and the liquid enthalpy $H^j$ 
\State $n^j \gets \fhold(L^{j-1})$
\State \revtext{$H^j \gets n^j \fhl(T^j,\bm{x}^j)$}
\EndFor
\end{algorithmic}
\revtext{
\textbf{Output:} a solution to~\eqref{eq:ode,stationary-unperturbed}-\eqref{eq:alg,stationary-unperturbed} 
}
\end{algorithm}

\color{black}
\noindent Algorithm~\ref{alg:outer-loop} computes a solution to~\eqref{eq:ode,stationary-unperturbed}-\eqref{eq:initial-conditions,apparatus,stationary-unperturbed} by solving the constrained optimization problem
\begin{align} \label{eq:outer-algo}
\min_{\bm{x}^1 \in \mathcal{X}} \big| \xapp(\bm{x}^1) - \xappinit \big|^2 \quad \text{s.t.} \quad n^1(\bm{x}^1) \ge 0.
\end{align}
In this problem, $\mathcal{X} := \{ \bm{x} \in [0,1]^C: \sum_{i=1}^C x_i = 1\}$ is the set of admissible composition vectors and the quantities $n^1(\bm{x}^1)$ and $\xapp(\bm{x}^1)$ are defined as follows: 
\begin{align} \label{eq:n^1-and-xapp-as-functions-of-x^1}
n^1(\bm{x}^1) := \nappinit - \sum_{j=2}^S n^j(\bm{x}^1)
\qquad \text{and} \qquad
\xapp(\bm{x}^1) := \sum_{j=1}^S n^j(\bm{x}^1) \bm{x}^j(\bm{x}^1) / \nappinit,
\end{align} 
where $\bm{x}^j(\bm{x}^1)$ and $n^j(\bm{x}^1)$ for $j \in \{2,\dots,S\}$ denote the results of Algorithm~\ref{alg:inner-loop} with input $\bm{x}^1$. We implemented Algorithm~\ref{alg:outer-loop} in python using the constrained optimization solver COBYLA \cite{powell1994direct} within the package NLopt \cite{NLopt}.
\color{black}

\begin{algorithm}
\caption{Outer loop of the stationary initialization} \label{alg:outer-loop}
\revtext{
\textbf{Input:} $\nappinit$, $\xappinit$, $P_0, Q_0, \Tcond_0$, an initial pot composition candidate $\bm{x}_0^1$ 
}
\begin{algorithmic}
\State $\text{tol} \gets 10^{-10}$
\State $\text{res} \gets \infty$
\State \revtext{$\text{pen} \gets 10^2$}
\State $\bm{x}^1 \gets \bm{x}^1_0$
\While{$\text{res} > \text{tol}$}
\State Compute stage holdups $n^j = n^j\left(\bm{x}^1\right)$ for $j \in \{2,\dots,S\}$ with Algorithm~\ref{alg:inner-loop}
\State $n^1 \gets \nappinit - \sum_{j=2}^S n^j$
\State Compute apparatus composition $\xapp = \xapp(\bm{x}^1)$ with Algorithm~\ref{alg:inner-loop} using~\eqref{eq:n^1-and-xapp-as-functions-of-x^1} 
\If{$n^1 \ge 0$}
\State $\text{res} \gets \big| \xapp(\bm{x}^1) - \xappinit \big|^2$
\State \revtext{Update $\bm{x}^1$ within $\mathcal{X}$ using COBYLA}	
\ElsIf{$n^1 < 0$}
\State $\text{res} \gets \big| \xapp(\bm{x}^1) - \xappinit \big|^2 + \text{pen}$
\State \revtext{Update $\bm{x}^1$ within $\mathcal{X}$ using COBYLA}
\EndIf
\EndWhile
\end{algorithmic}
\revtext{
\textbf{Output:} a solution to~\eqref{eq:ode,stationary-unperturbed}-\eqref{eq:initial-conditions,apparatus,stationary-unperturbed}
}
\end{algorithm}

\subsubsection{Instationary initialization with finite reflux}
\label{sec:instationary-initialization}

\revtext{
In our instationary initialization, we compute initial guesses for the state variables $\xi$ and $\eta$ of the initial-value problem~\eqref{eq:ivp-ode,perturbed}-\eqref{eq:ivp-initial-conditions,apparatus} to be solved. As these initial guesses, however, we cannot simply take the result $(\xist,\etast)$ of the stationary initialization (Section~\ref{sec:stationary-initialization}). After all, $(\xist,\etast)$ is a solution to the unperturbed system with infinite reflux, but the initial-value problem to be solved involves the perturbed system with finite reflux. In order to smoothly go 
from the unperturbed system with infinite reflux to the perturbed system with finite reflux, we use the slack variables of our relaxed system~\eqref{eq:ode,relaxed}-\eqref{eq:alg,relaxed}. Specifically, we solve a suitable boundary-value problem for the relaxed system, namely 
\begin{align}
\dot{\xi} &= \frel(\xi,\eta,s,\epsilon,\hat{u}) \label{eq:bvp-ode,relaxed}\\
0 &= \grel(\xi,\eta,s,\epsilon,\hat{u}) \label{eq:bvp-alg,relaxed}
\end{align} 
on an initialization time horizon $[\tpre,0]$ with the following boundary conditions on the efflux and on the apparatus holdup and composition:
\begin{gather}
\epsilon(\tpre) = 0 \qquad \text{and} \qquad \epsilon(0) = \epsilon_0 \label{eq:bvp-boundary-condition,efflux} \\
\napp(0) = \nappinit \qquad \text{and} \qquad \xapp(0) = \xappinit. \label{eq:bvp-boundary-condition,apparatus}
\end{gather} 
Similar to the stationary initialization, $\epsilon_0 := \epsilon(0)$ and 
\begin{align}
\hat{u} \equiv \hat{u}_0 := (P_0,Q_0,\Tcond_0) := (P(0),Q(0),\Tcond(0))
\end{align}
denote the values of the control input prescribed at the beginning of the simulation phase $[0,\tend]$. It should be noticed that in the boundary-value problem~\eqref{eq:bvp-ode,relaxed}-\eqref{eq:bvp-boundary-condition,apparatus}, the efflux ratio $\epsilon$ is a free state variable and not a prescribed control input like in~\eqref{eq:ivp-ode,perturbed}-\eqref{eq:ivp-initial-conditions,apparatus}. 
As initial guesses for the state variables $\xi$ and $\eta$ 
at time $\tpre$, we use the results $\xist$ and $\etast$ of the stationary initialization (while as an initial guess for the state variable $s$, we take $0$, of course). 
We solve the boundary-value problem~\eqref{eq:bvp-ode,relaxed}-\eqref{eq:bvp-boundary-condition,apparatus} using the packages pyomo and pyomo.dae \cite{bynum2021pyomo, hart2011pyomo, Nicholson2018} in conjunction with knitro~\cite{byrd2006k}. In particular, this means that the boundary-value problem is treated as the constraints of a dynamic optimization problem with a dummy objective function $0$. In our simulation examples (Section~\ref{sec:study}), we set tight bounds on the slack variables as additional constraints. 
}

\subsubsection{Solving the initial-value problem}
\label{sec:solving-the-ivp}

\revtext{
In order to solve the initial-value problem~\eqref{eq:ivp-ode,perturbed}-\eqref{eq:ivp-initial-conditions,apparatus}, we use the results $\xiin$ and $\etain$ of the instationary initialization (Section~\ref{sec:instationary-initialization}) in a suitable manner. Specifically, instead of~\eqref{eq:ivp-ode,perturbed}-\eqref{eq:ivp-initial-conditions,apparatus} with its apparatus-based initial conditions, we consider the auxiliary initial-value problem
\begin{align}
\dot{\xi} &= \fpert_\delta(\xi,\eta,u) \label{eq:ivp-ode,perturbed,auxiliary}\\
0 &= \gpert_\delta(\xi,\eta,u) \label{eq:ivp-alg,perturbed,auxiliary}
\end{align}
with initial conditions directly on the differential state variables $\xi$, that is,
\begin{align} \label{eq:ivp-initial-conditions,auxiliary}
\xi(0) = \xi^0. 
\end{align}
As the initial value $\xi^0$ for the differential variables, we take 
the value $\xi^0 := \xiin(0)$ of the solution to the boundary-value problem~\eqref{eq:bvp-ode,relaxed}-\eqref{eq:bvp-boundary-condition,apparatus} at time $0$. Similarly, we take the value $\etain(0)$ as an initial guess for the algebraic variables of~\eqref{eq:ivp-ode,perturbed,auxiliary}-\eqref{eq:ivp-alg,perturbed,auxiliary} at time $0$. In view of 
the boundary condition~\eqref{eq:bvp-boundary-condition,apparatus}, it is clear that every solution of the auxiliary initial-value problem~\eqref{eq:ivp-ode,perturbed,auxiliary}-\eqref{eq:ivp-initial-conditions,auxiliary} is also a solution to the original problem~\eqref{eq:ivp-ode,perturbed}-\eqref{eq:ivp-initial-conditions,apparatus} with its apparatus-based initial conditions. It should be noticed that, from a practical point of view, only the apparatus-based initial conditions are of relevance -- just because the initial apparatus conditions are known in real world, while the initial stage conditions (stage holdups, compositions, and enthalpies) making up the differential state variables $\xi$ are usually not. 
We solve the auxiliary initial-value problem~\eqref{eq:ivp-ode,perturbed,auxiliary}-\eqref{eq:ivp-initial-conditions,auxiliary} using the packages pyomo and pyomo.dae \cite{bynum2021pyomo, hart2011pyomo, Nicholson2018} in conjunction with ipopt~\cite{wachter2006implementation}. In our implementation, we make a uniform choice for the perturbation parameters $\delta = (\delta_i^j)$ of~\eqref{eq:ivp-ode,perturbed,auxiliary}-\eqref{eq:ivp-initial-conditions,auxiliary}, namely
\begin{align} \label{eq:perturbation-parameter-choice}
\delta_i^j := 10^{-6} \qquad (i \in \{1,\dots,C\}, j \in \{1,\dots,S\}). 
\end{align}
Alternatively, and a bit more systematically, one could also choose the perturbation parameter values $\delta_i^j$ with the help of the results $\etain$ and $s^{\mathrm{in}}$ of the instationary initialization. (Specifically, one could choose the $\delta_i^j$ such that the vapor summation equations are continuous at the transition from the initialization phase to the simulation phase, that is, such that $1+s^{\mathrm{in}}(0)$ is equal to the right-hand side of~\eqref{eq:ysum-perturbed} with the stream variable values $L^j$ and $V^j$ taken from $\etain(0)$.) At least in our simulation examples (Section~\ref{sec:study}), however, the simple uniform choice~\eqref{eq:perturbation-parameter-choice} works just as well. 
}

\section{Case study} \label{sec:study}

In this section, we discuss the effects of multiplicities in activity coefficients on batch distillation simulations. Specifically, we investigate to what extent stationarily indistinguishable activity coefficient parameter sets become distinguishable when used in a dynamic simulation. We first consider the vapor-liquid equilibrium and activity coefficient diagrams for three mixtures, and then proceed to present the dynamic results for these mixtures, meaning the batch distillation simulation results. We then conclude our study by discussing the effects of different activity coefficient parameter sets on the batch distillation simulation results. 

\subsection{Stationary results} \label{sec:stationary_results}

\revtext{As we will see in the following}, alternative NRTL parameters, resulting from the multiplicities, can lead to significantly different simulation results of a batch distillation process. To highlight these findings, we therefore consider a ternary, a quaternary mixture, and a mixture with five components, all including both zeotropic and azeotropic submixtures.
The first example mixture is acetone (1), methanol (2), and butanol (3); the second example mixture is acetone (1), methanol (2), butanol (3), and chloroform (4); and the third example mixture is 2-butanone (1), acetic acid (2), ethanol (3), ethyl acetate (4), toluene (5).

Applying the parameter estimation method and the solver used in~\cite{werner2023multiplicities}, we obtain (practically) indistinguishable NRTL parameter estimates that differ only within an objective value of $10^{-5}$ (see \eqref{eq:NRTL-parameter-estimation-problem}) for some of the binary submixtures of the three mixtures considered here, see Table~\ref{tab:nrtl_parameters}.

\begin{table}[h]
    \centering
        \begin{tabular}{|c|c|c|c|c|c|c|}
            \hline
            Submixture & type & $a_{ij}$ & $b_{ij}$ & $a_{ji}$ & $b_{ji}$ & parameters \\
            \hline
            \multicolumn{7}{|c|}{\textbf{Example 1}} \\
            \hline
            (1)-(2) & azeotropic & $0.0$ & $101.89$ & $0.0$ & $114.131$ & $\pref^{\{1,2\}}$ \\
            (1)-(2) & azeotropic & $-24.908$ & $8304.053$ & $25.36$ & $-8236.672$ & $\palt^{\{1,2\}}$\\ 
            (1)-(3) & zeotropic & $-8.888$ & $3077.28$ & $10.298$ & $-3326.5$ & $\pref^{\{1,3\}}$ \\ 
            (1)-(3) & zeotropic & $0.302$ & $-73.879$ & $-1.546$ & $730.087$ & $\palt^{\{1,3\}}$ \\
            (2)-(3) & zeotropic & $2.22$ & $-337.71$ & $-1.516$ & $242.624$ & $\pref^{\{2,3\}}$ \\
            (2)-(3) & zeotropic & $-4.747$ & $2061.016$ & $2.307$ & $-1074.57$ & $\palt^{\{2,3\}}$ \\
            \hline
            \multicolumn{7}{|c|}{\textbf{Example 2}} \\
            \hline
            (1)-(4) & azeotropic & $0.965$ & $-590.026$ & $0.538$ & $-106.422$ & $\pref^{\{1,4\}}$ \\
            (1)-(4) & azeotropic & $1.371$ & $-1021.811$ & $-1.420$ & $1143.750$ & $\palt^{\{1,4\}}$ \\
            (2)-(4) & azeotropic & $0.0$ & $-71.903$ & $0.0$ & $690.066$ & $\pref^{\{2,4\}}$ \\
            (3)-(4) & zeotropic & $0.921$ & $-410.590$ & $-4.426$ & $1899.050$ & $\pref^{\{3,4\}}$ \\
            \hline
            \multicolumn{7}{|c|}{\textbf{Example 3}} \\
            \hline
            (1)-(2) & zeotropic & $0.0$ & $544.662$ & $0.0$ & $-284.699$ & $\pref^{\{1,2\}}$ \\
            (1)-(3) & azeotropic & $0.7593$ & $-132.99$ & $-1.5609$ & $654.555$ & $\pref^{\{1,3\}}$ \\
            (1)-(4) & azeotropic & $0.0$ & $-105.570$ & $0.0$ & $173.945$ & $\pref^{\{1,4\}}$ \\
            (1)-(5) & zeotropic & $2.0126$ & $-804.3$ & $-2.7474$ & $1185.26$ & $\pref^{\{1,5\}}$ \\
            (2)-(3) & zeotropic & $0.0$ & $-252.482$ & $0.0$ & $225.476$ & $\pref^{\{2,3\}}$ \\
            (2)-(3) & zeotropic & $-2.74$ & $1572.853$ & $0.89$ & $-760.324$ & $\palt^{\{2,3\}}$ \\
            (2)-(4) & zeotropic & $0.0$ & $-235.279$ & $0.0$ & $515.821$ & $\pref^{\{2,4\}}$ \\
            (2)-(5) & azeotropic & $0.29395$ & $0.0$ & $1.60112$ & $0.0$ & $\pref^{\{2,5\}}$ \\
            (3)-(4) & azeotropic & $-1.151$ & $ 524.424$ & $ -0.243$ & $282.956$ & $\pref^{\{3,4\}}$\\
            (3)-(5) & azeotropic & $1.1459$ & $-113.466$ & $-1.7221$ & $992.737$ & $\pref^{\{3,5\}}$ \\
            (4)-(5) & zeotropic & $-3.544$ & $1438.45$ & $0.298$ & $-160.310$ & $\pref^{\{4,5\}}$\\
            \hline
        \end{tabular}
    \caption{NRTL parameters for examples 1-3; note that example 2 also includes the components 1, 2, and 3 from example 1. The reference NRTL parameters for examples 1-3 were obtained through Aspen}
    \label{tab:nrtl_parameters}
\end{table}

In essence, the parameter estimation method from~\cite{werner2023multiplicities} proceeds in the following steps for every binary submixture $\{i,j\} \subset \{1,\dots, C\}$ of the considered $C$-component mixture: 
\begin{itemize}
    \item A reference parameter value $\pref^{\{i,j\}} = (a_{ij}^{\mathrm{ref}}, b_{ij}^{\mathrm{ref}}, a_{ji}^{\mathrm{ref}}, b_{ji}^{\mathrm{ref}})$ is taken from the literature
    \item The binary mixture is considered at $n+1$ different compositions $\bm{x}^{(0)}, \dots, \bm{x}^{(n)}$, namely $\bm{x}^{(l)} := x_i^{(l)} \bm{e}_i + x_j^{(l)} \bm{e}_j := l/n \cdot \bm{e}_i + (1-l/n) \cdot  \bm{e}_j$ for $l \in \{0,\dots, n\}$
    \item The bubble-point temperature $T^{(l)} := T^{\pref^{\{i,j\}}}(\bm{x}^{(l)})$ of the binary mixture with composition $\bm{x}^{(l)}$ is computed using the reference parameter $\pref^{\{i,j\}}$. In order to do so, the vapor summation equation
    \begin{align}
        1 = \sum_{k=1}^C y_k^{\pref^{\{i,j\}}}(P,T,\bm{x}^{(l)}) = y_i^{\pref^{\{i,j\}}}(P,T,\bm{x}^{(l)}) + y_j^{\pref^{\{\i,j\}}}(P,T,\bm{x}^{(l)})
    \end{align}
    is solved for $T$, where $y_k^{\pref^{\{i,j\}}}(P,T,x) := \frac{P_{\text{sat}, k}(T)}{P} \gamma_k^{\pref^{\{i,j\}}}(T, \bm{x}) x_k$ as in~\eqref{eq:Raoult}.
    \item With the solution method described in~\cite{werner2023multiplicities}, the least-squares parameter estimation problem 
    \begin{align} \label{eq:NRTL-parameter-estimation-problem}
        \min_{\theta^{\{i,j\}} \in \Theta} \frac{1}{n} \sum_{l=0}^n \sum_{k \in \{i,j\}} \big( \ln \gamma_k^{\theta^{\{i,j\}}}(T^{(l)}, \bm{x}^{(l)}) - \ln \gamma_k^{\pref^{\{i,j\}}}(T^{(l)}, \bm{x}^{(l)})  \big)^2 
    \end{align}
    is solved for the pair-interaction parameters $\theta^{\{i,j\}} = (a_{ij}, b_{ij}, a_{ji}, b_{ji})$ from a reasonable parameter set $\Theta$ (while the non-randomness parameter $\alpha_{ij} := 0.3$ is always fixed to $0.3$ as recommended in~\cite{renon1968local}). In this case study, we set $n=100$ for all binary submixtures. 
\end{itemize}
Clearly, the reference parameter value $\pref^{\{i,j\}}$ is one solution to the parameter estimation problem~\eqref{eq:NRTL-parameter-estimation-problem}, but for some submixtures $\{i,j\}$ of some mixtures, there are more solutions than this trivial solution. In these cases, the parameter estimation problem~\eqref{eq:NRTL-parameter-estimation-problem} has multiple solutions or, in other words, these solutions are (practically) indistinguishable parameter estimates. Applying the method sketched above to the $3$-, $4$-, and $5$-component systems considered here, we obtain practically indistinguishable pair-interaction parameters $\pref^{\{i,j\}}$ and $\palt^{\{i,j\}}$ for the binary submixture(s) 
\begin{itemize}
    \item $\{1,2\}$, $\{1,3\}$, $\{2,3\}$ in the case of the $3$-component system
    \item $\{1,4\}$ in the case of the $4$-component system
    \item $\{2,3\}$ in the case of the $5$-component system.
\end{itemize}
See Table~\ref{tab:nrtl_parameters} for these indistinguishable reference and alternative parameter sets $\pref^{\{i,j\}}$ and $\palt^{\{i,j\}}$, respectively. Specifically, the sum of squared errors for the alternative parameter sets $\palt^{\{i,j\}}$ is less than $10^{-5}$ in all considered examples. 
It is clear from these practically zero sums of squared errors and from the relatively large number $n+1$ of different compositions taken into account in~\eqref{eq:NRTL-parameter-estimation-problem} that for the binary submixtures with indistinguishable interaction parameters, the binary (logarithmic) activity coefficient curves
\begin{align} \label{eq:ln-gamma-curves}
    \mathcal{X}_{ij} \ni \bm{x} \mapsto \ln\gamma_k^{\pref^{\{i,j\}}}(T^{\pref^{\{i,j\}}}(\bm{x}),\bm{x})
    \qquad \text{and} \qquad
    \mathcal{X}_{ij} \ni \bm{x} \mapsto \ln\gamma_k^{\palt^{\{i,j\}}}(T^{\palt^{\{i,j\}}}(\bm{x}),\bm{x})
\end{align}
are practically indistinguishable as well for $k \in \{i,j\}$. In fact, this is precisely what the bottom diagrams of Figures~\ref{fig:stationary_diagrams} and~\ref{fig:stationary_diagrams2} demonstrate. In the above relation, 
\begin{align}
\mathcal{X}_{ij} := \{t \bm{e}_i + (1-t) \bm{e}_j: t \in [0,1]\}
\end{align}
denotes the subset of all $C$-component composition vectors that actually involve only component $i$ and component $j$. \revtext{Here $\bm{e}_i$ denotes the unit vector in $\mathbb{R}^C$ whose entries are all zero except the $i$th entry which is one.} So, in the $3$-component special case, $\mathcal{X}_{ij}$ is just one side of the ternary composition triangle (simplex). Apart from the indistinguishability of the logarithmic activity coefficient curves, we also find that the bubble-point curves 
\begin{align} \label{eq:bubble-point-temperature-curves}
    \mathcal{X}_{ij} \ni \bm{x} \mapsto T^{\pref^{\{i,j\}}}(\bm{x})
    \qquad \text{and} \qquad
    \mathcal{X}_{ij} \ni \bm{x} \mapsto T^{\palt^{\{i,j\}}}(\bm{x})
\end{align}
and the vapor composition curves
\begin{align}
    \mathcal{X}_{ij} \ni \bm{x} \mapsto \bm{y}^{\pref^{\{i,j\}}}(\bm{x})
    \qquad \text{and} \qquad
    \mathcal{X}_{ij} \ni \bm{x} \mapsto \bm{y}^{\palt^{\{i,j\}}}(\bm{x})
\end{align}
based on the reference or alternative parameter sets $\pref^{\{i,j\}}$ or $\palt^{\{i,j\}}$, respectively, are practically indistinguishable as well. And hence the same holds true for the dew-point curves, see the top diagrams of Figures~\ref{fig:stationary_diagrams} and~\ref{fig:stationary_diagrams2}. In fact, as the figures show, the bubble- and dew-point curves are even less distinguishable than the activity coefficient curves. This is remarkable since the bubble- and dew-point temperatures are not (explicitly) taken into account in the parameter estimation problem~\eqref{eq:NRTL-parameter-estimation-problem}, whereas the activity coefficients obviously are.

\begin{figure}[H]
	\centering
	\makebox[\textwidth]{\includegraphics[width=\textwidth]{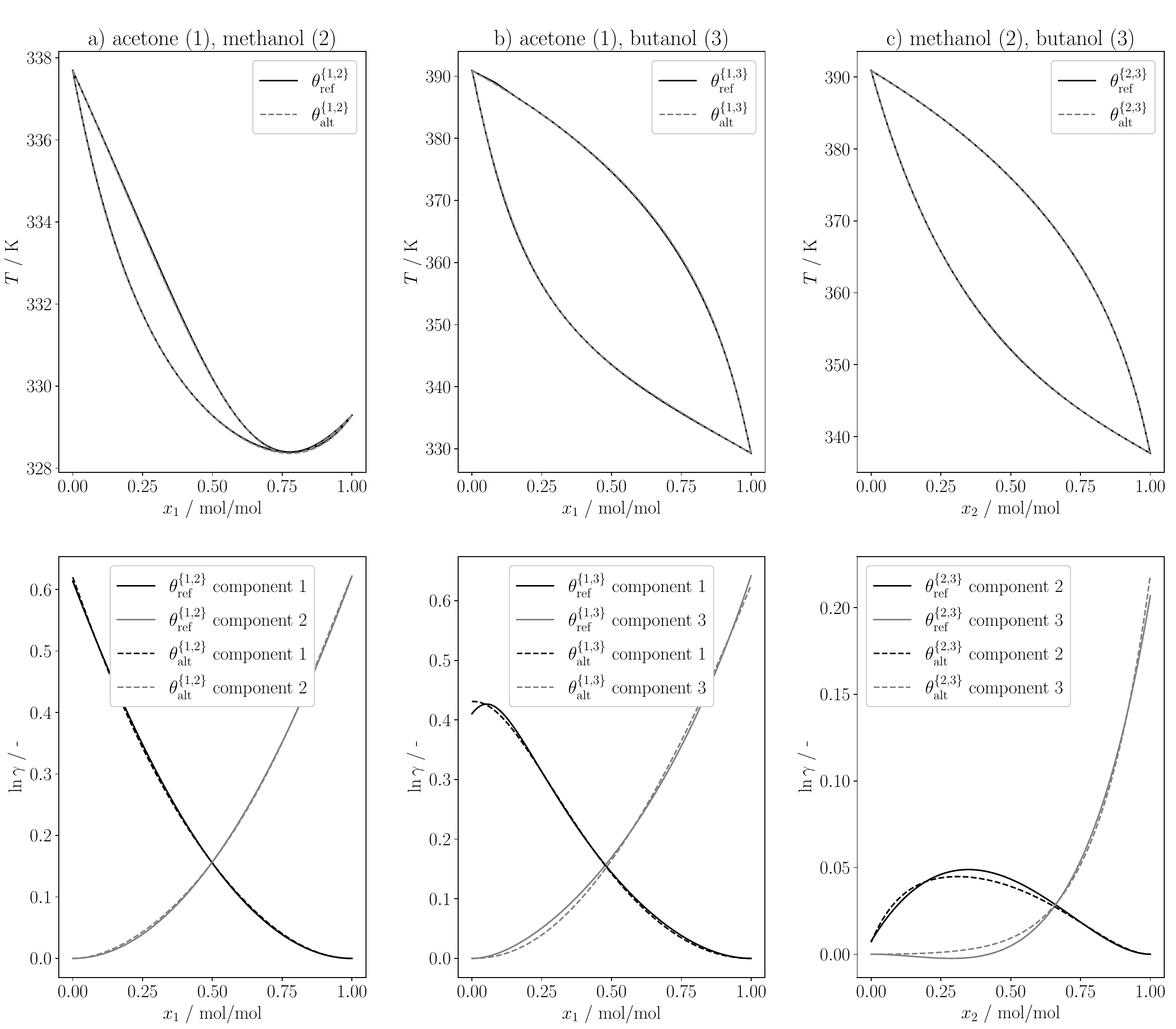}}
	\caption{VLE diagrams~\eqref{eq:bubble-point-temperature-curves} and logarithmic activity coefficient diagrams ~\eqref{eq:ln-gamma-curves} of binary submixtures a) (1)-(2), b) (1)-(3), and c) (2)-(3) of mixture 1; comparison between reference and alternative NRTL parameters}
	\label{fig:stationary_diagrams}
\end{figure}

\begin{figure}[H]
	\centering
	\makebox[\textwidth]{\includegraphics[width=\textwidth]{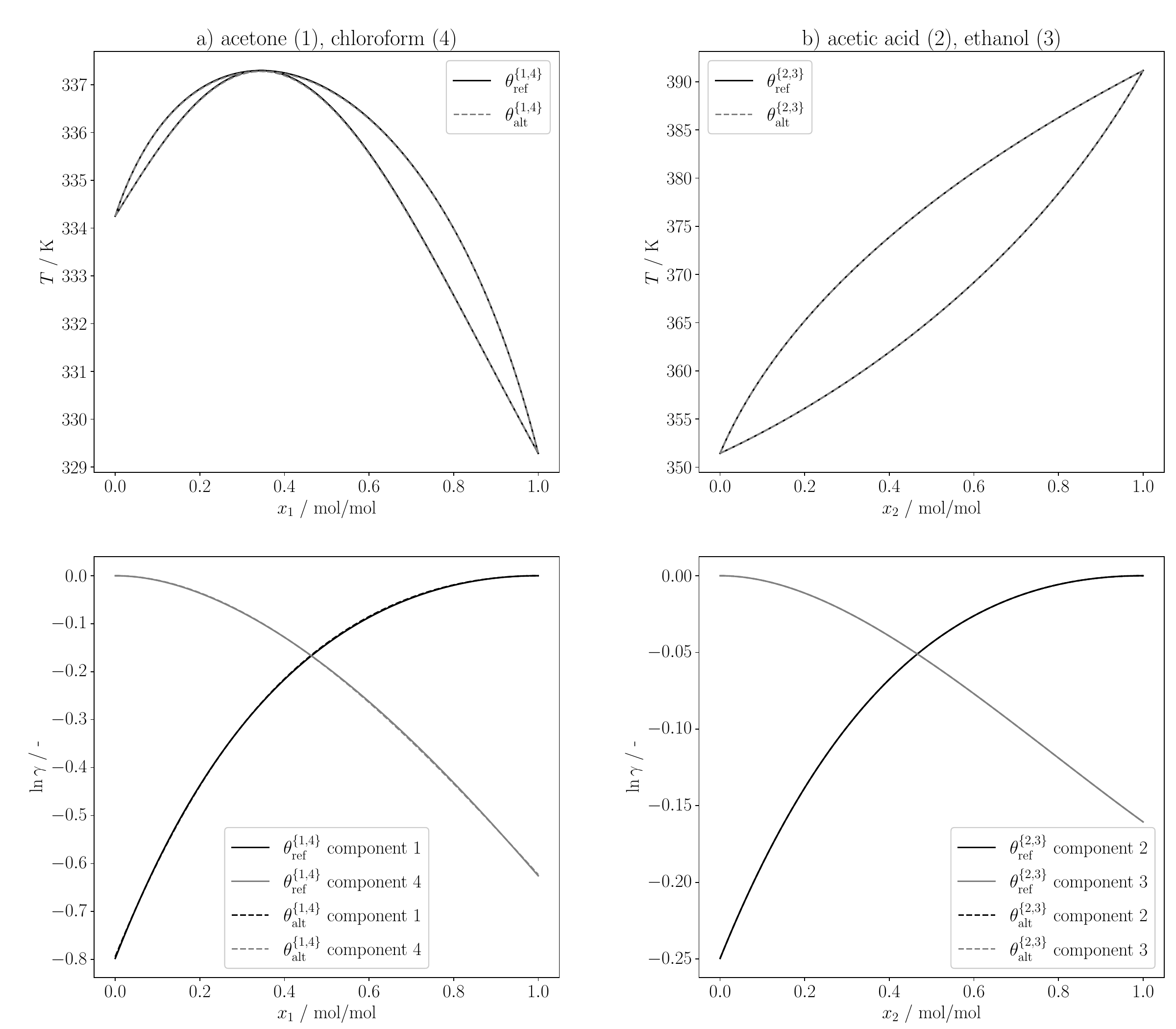}}
	\caption{VLE diagrams~\eqref{eq:bubble-point-temperature-curves} and logarithmic activity coefficient diagrams~\eqref{eq:ln-gamma-curves} of binary submixture a) (1)-(4) of mixture 2, and b) (2)-(3) of mixture 3; comparison between reference and alternative NRTL parameters}
	\label{fig:stationary_diagrams2}
\end{figure}

\subsection{Dynamic results} \label{sec:dynamic_results}

In this subsection we discuss the batch distillation simulation results for the three different examples described in the previous subsection using alternative NRTL parameters $\palt^{\{i,j\}}$ for some of the submixtures.

All simulations shown in this section are solved using knitro \cite{byrd2006k} during the initialization boundary problem (see Section \ref{sec:instationary-initialization}) on a negative time span $[-20, 0]$ s, and ipopt \cite{wachter2006implementation} during the simulation phase (see Section~\ref{sec:solving-the-ivp}). After initialization, the simulation is solved on the time horizon of $[0, 10000]$ s, which is divided into smaller sub-intervals of \SI{20}{s}. On each of those sub-intervals we use ipopt to solve the DAE system using the orthogonal collocation method \cite{cuthrell1989simultaneous} with the Lagrange-Radau discretization scheme, $12$ finite elements and $3$ collocation points. We furthermore set the ipopt option parameters mu\_init and tol to $10^{-3}$ and $10^{-5}$, respectively, and choose appropriate scaling factors for the variables and equations in our DAE system. Assuming the current sub-interval to be $[t_i, t_{i+1}]$, we initialize the variables at all discrete time steps\footnote{i. e. the union of all collocation points and the beginning and end of the finite element intervals} of the next sub-interval with the values of the current sub-interval at time $t_{i+1}$. The simulation stops as soon as the liquid molar holdup  $n^1$ in the pot is smaller than \SI{0.75}{\mole}, or one of the liquid mole fractions $x_i^j$ on one of the stages is smaller than \SI{e-12}{\mole\per\mole}.
The number of stages is set to 10 for all performed simulations. The control input values and initial values we use throughout the different simulations in this study are listed in Table \ref{tab:sim_parameters_example1}.

\begin{table}[h]
    \centering
        \begin{tabular}{|c|c|c|}
            \hline
            Control input / initial value & Unit & Value \\
            \hline
            pressure $P$ & Pa & $101330$ \\
            heat $Q$ & W & $2000$ \\ 
            efflux ratio $\epsilon$ & - & $0.5$ \\
            condenser temperature $\Tcond$ & K & $298.15$ \\
            initial total moles $\nappinit$ mixture 1 & mol & $20.717$ \\
            initial composition $\xappinit$ mixture 1 & [mol/mol, mol/mol, mol/mol] & $(0.3, 0.5, 0.2)$ \\
            initial total moles $\nappinit$ mixture 2 & mol & $13.145$ \\
            initial composition $\xappinit$ mixture 2 & [mol/mol, ..., mol/mol] & $(0.2, 0.2, 0.3, 0.3)$ \\
            initial total moles $\nappinit$ mixture 3 & mol & $14.390$ \\
            initial composition $\xappinit$ mixture 3 & [mol/mol, ..., mol/mol] & $(0.4, 0.3, 0.1, 0.1, 0.1)$ \\
            \hline
        \end{tabular}
    \caption{parameters and initial values used for the simulations performed in this study}
    \label{tab:sim_parameters_example1}
\end{table}

\begin{table}[h]
	\centering
	\begin{tabular}{|c|c|c|c|c|c|}
		\hline
		Deviation & $i=1$ & $i=2$ & $i=3$ & $i=4$ & $i=5$\\
		\hline
		$\devaltx{i}{1}{\mathrm{alt}_1}$ & 0.0061 & 0.0055 & 0.0026 & - & -\\
		$\devaltx{i}{1}{\mathrm{alt}'_1}$ & 0.0075 & 0.0066 & 0.0021 & - & - \\
		$\devaltx{i}{10}{\mathrm{alt}_1}$ & 0.0228 & 0.0380 & 0.0374 & - & - \\
		$\devaltx{i}{10}{\mathrm{alt}'_1}$ & 0.0230 & 0.0513 & 0.0495 & - & - \\
		$\devaltgamma{i}{1}{\mathrm{alt}_1}$ & 0.3377 & 0.0113 & 0.0311 & - & - \\
		$\devaltgamma{i}{1}{\mathrm{alt}'_1}$ & 0.3430 & 0.02217 & 0.0307 & - & - \\
		$\devaltgamma{i}{10}{\mathrm{alt}_1}$ & 0.5601 & 0.0180 & 0.0247 & - & - \\
		$\devaltgamma{i}{10}{\mathrm{alt}'_1}$ & 0.5746 & 0.0209 & 0.1452 & - & - \\
		\hline
		$\devaltx{i}{1}{\mathrm{alt}_2}$ & 0.0031 & 0.0043 & 0.0012 & 0.0073 & - \\
		$\devaltx{i}{10}{\mathrm{alt}_2}$ & 0.0182 & 0.0517 & 0.0680 & 0.0584 & - \\
		$\devaltgamma{i}{1}{\mathrm{alt}_2}$ & 0.1328 & 0.0718 & 0.0232 & 0.0253 & - \\
		$\devaltgamma{i}{10}{\mathrm{alt}_2}$ & 0.0920 & 0.7378 & 0.2001 & 0.0659 & - \\
		\hline
		$\devaltx{i}{1}{\mathrm{alt}_3}$ & 0.0128 & 0.0084 & 0.0183 & 0.0022 & 0.0086 \\
		$\devaltx{i}{10}{\mathrm{alt}_3}$ & 0.1015 & 0.1590 & 0.1210 & 0.0199 & 0.1529 \\
		$\devaltgamma{i}{1}{\mathrm{alt}_3}$ & 0.0081 & 0.0152 & 0.3107 & 0.0051 & 0.0468 \\
		$\devaltgamma{i}{10}{\mathrm{alt}_3}$ & 0.0877 & 0.2750 & 0.4756 & 0.0955 & 0.6392 \\
		\hline		
	\end{tabular}
	\caption{Metrics $\devaltx{i}{j}{\mathrm{alt}}$, and $\devaltgamma{i}{j}{\mathrm{alt}}$ for simulations $\text{alt}_1$, $\text{alt}'_1$, $\text{alt}_2$, and $\text{alt}_3$ compared to the respective reference simulations $\text{ref}_1$, $\text{ref}_2$, and $\text{ref}_3$}
	\label{tab:delta_dynamic_results}
\end{table}

\subsubsection{A three-component mixture} \label{sec:dynamic_results_example1}
The first example including the components acetone (1), methanol (2), and butanol (3) compares three simulations with the following NRTL parameter settings (see Table \ref{tab:nrtl_parameters} for the different pair-wise NRTL parameters):

\begin{itemize}
    \item Simulation $\text{ref}_1$: $\prefidx{1} = \left(\pref^{\{1,2\}}, \pref^{\{1,3\}}, \pref^{\{2,3\}} \right)$,
    \item Simulation $\text{alt}_1$: $\theta_{\text{alt}_1} = \left( \palt^{\{1,2\}}, \pref^{\{1,3\}}, \pref^{\{2,3\}} \right)$,
    \item Simulation $\text{alt}'_1$: $\theta_{\text{alt}'_1} = \left( \palt^{\{1,2\}}, \palt^{\{1,3\}}, \palt^{\{2,3\}} \right)$.
\end{itemize}

\noindent The results are shown in Figure \ref{fig:dynamic_diagram_example1}. The liquid composition trajectories 
\begin{align} \label{eq:liquid-composition-trajectories}
t \mapsto \bm{x}^{j,\theta}(t)
\end{align}
in the pot and head stages $j = 1$ and $j = 10$ are shown in \revtext{Figure \ref{fig:dynamic_diagram_example1} a) and b)}, while the corresponding activity coefficients 
\begin{align} \label{eq:activitiy-coefficient-trajectories}
t \mapsto \gamma_i^j(t) := \gamma_i^{\theta}(T^j(\bm{x}^{j,\theta}(t)), \bm{x}^{j,\theta}(t))
\qquad (j \in \{1,10\})
\end{align}
are plotted in \revtext{Figure \ref{fig:dynamic_diagram_example1} c) and d)}.
The results are plotted as a line, dashed line, and dotted line for simulations $\text{ref}_1$, $\text{alt}_1$, and $\text{alt}'_1$, respectively. The vapor mole fractions in the pot and head, the temperatures on the different stages, and the liquid holdup are shown in \ref{app:additional_results_mixture1} in Figures \ref{fig:dynamic_diagram_example1_vapor} and \ref{fig:dynamic_diagram_example1_temp_n}.

\revtext{We furthermore define the worst deviation from the reference simulation regarding the liquid composition trajectories \eqref{eq:liquid-composition-trajectories} for component $i$  on stage $j$ as}
	
\begin{align}
	\revtext{\devaltx{i}{j}{\mathrm{alt}} = \max_{t\in [0,t_\mathrm{end}]} | x_i^{j, \theta_{\text{alt}}}(t) - x_i^{j, \theta_{\text{ref}}}(t) |,  \qquad (\C, j \in \{1,10\}),}
\end{align}
\revtext{and regarding the activity coefficient trajectories \eqref{eq:activitiy-coefficient-trajectories} for component $i$ on stage $j$ as}

\begin{align}
	\revtext{\devaltgamma{i}{j}{\mathrm{alt}} = \max_{t \in [0,t_\mathrm{end}]} | \gamma_i^{j, \theta_{\text{alt}}}(t) - \gamma_i^{j, \theta_{\text{ref}}}(t) |,}
\end{align}
\revtext{with $j \in \{1,10\}$, $\C$, and with $\mathrm{alt} \in \{\mathrm{alt}_1, \mathrm{alt}'_1 \}$.}

There are significant differences in the activity coefficient trajectories visible in Figure \ref{fig:dynamic_diagram_example1}. Especially starting from \SI{300}{\second}, $\gamma_1^1$ decreases rapidly for simulations $\mathrm{alt}_1$ and $\mathrm{alt}'_1$, while this behavior is not visible in simulation 1. The same effect can be observed in the head stage, where there is a sharp decrease of the $\gamma^{10}_1$ at the same time, while $x^{10}_2$ and $x_3^{10}$ decrease and increase rapidly. \revtext{The worst deviation from the reference simulation $\text{ref}_1$ is maximal for component 1 of simulation $\mathrm{alt}'_1$ with $\devaltgamma{1}{1}{\mathrm{alt}'_1} = 0.3430$ and $\devaltgamma{1}{10}{\mathrm{alt}'_1} = 0.5746$.} \revtext{Regarding the liquid mole fractions, the worst deviation from the reference simulation on stage 1 is maximal for component 1 with $\devaltx{1}{1}{\mathrm{alt}'_1}=0.0075$. On stage $10$ the error is approximately three times higher with $\devaltx{1}{10}{\mathrm{alt}'_1} = 0.0230$.}

We also performed individual simulations, where we used the alternative NRTL parameters only for the submixtures (1)-(3), and (2)-(3), but comparing this to reference simulation $\text{ref}_1$, there was not a large visible difference between those simulations. Therefore, we can conclude for this example that the alternative NRTL parameters $\palt^{\{1,3\}}$, and $\palt^{\{2,3\}}$ on their own do not have a large effect on the simulation, whereas the alternative NRTL parameters $\palt^{\{1,2\}}$ have quite a large effect on the activity coeffiecient trajectories \revtext{with a maximum worst deviation of $\devaltgamma{1}{10}{\mathrm{alt}_1} = 0.5601$}, but \revtext{the effect of the sharp decrease in the activity coefficient trajectories of simulations $\mathrm{alt}_1$, and $\mathrm{alt}'_1$ is not reflected} on the other trajectories \revtext{with a maximum worst deviation of $\devaltx{2}{10}{\mathrm{alt}'_1}=0.0380$}.  Combining the different alternative NRTL parameters in simulation $\mathrm{alt}'_1$ intensifies the effect for this example as demonstrated in Figure \ref{fig:dynamic_diagram_example1} \revtext{and leads to maximum worst deviations of $\devaltgamma{1}{10}{\mathrm{alt}'_1} = 0.5746$, and $\devaltx{2}{10}{\mathrm{alt}'_1} = 0.0513$}. \revtext{Further worst deviations for simulations $\mathrm{alt}_1$, and $\mathrm{alt}'_1$ are listed in Table \ref{tab:delta_dynamic_results}.}

\begin{figure}[H]
	\centering
	\makebox[\textwidth]{\includegraphics[width=\textwidth]{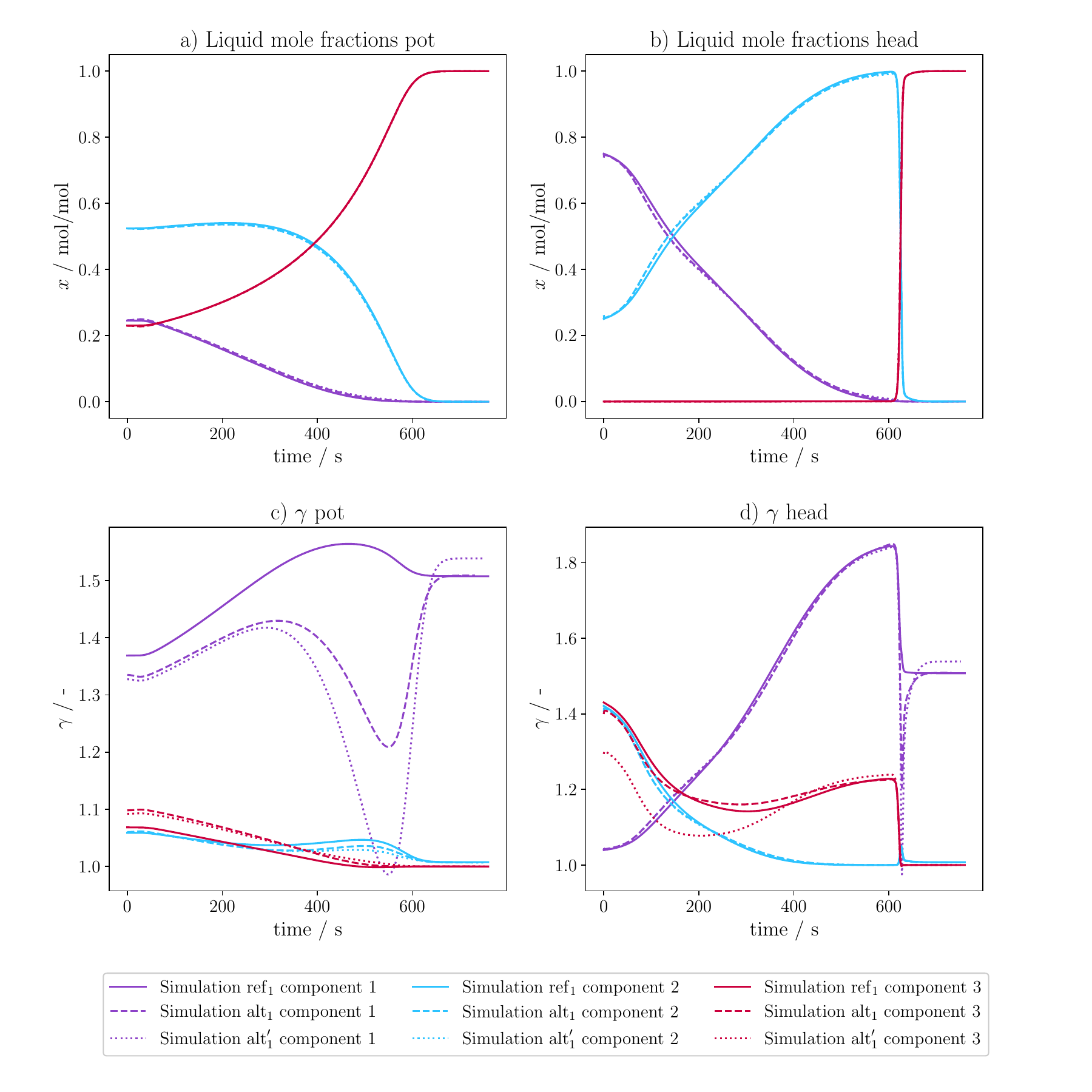}}
	\caption{Simulation results for example 1 acetone (1), methanol (2), and butanol (3); liquid composition trajectories~\eqref{eq:liquid-composition-trajectories} for pot and head stage are shown in a) and b), and activity coefficient trajectories~\eqref{eq:activitiy-coefficient-trajectories} for pot and head stage are shown in c) and d)}
	\label{fig:dynamic_diagram_example1}
\end{figure}

\subsubsection{A four-component mixture} \label{sec:dynamic_results_example2}
Example 2 includes the components acetone (1), methanol (2), butanol (3), and chloroform (4). We compare two different simulations with the following NRTL parameter settings (see Table \ref{tab:nrtl_parameters} for the NRTL parameters):

\begin{itemize}
    \item Simulation $\text{ref}_2$: $\prefidx{2} = \left(\pref^{\{1,2\}}, \pref^{\{1,3\}}, \pref^{\{1,4\}}, \pref^{\{2,3\}}, \pref^{\{2,4\}}, \pref^{\{3,4\}} \right)$,
    \item Simulation $\text{alt}_2$: $\theta_{\text{alt}_2} = \left( \palt^{\{1,2\}}, \palt^{\{1,3\}}, \palt^{\{1,4\}}, \palt^{\{2,3\}}, \pref^{\{2,4\}}, \pref^{\{3,4\}} \right)$.
\end{itemize}

The results are shown in Figure \ref{fig:dynamic_diagram_example2}. The liquid mole fractions $\bm{x}$ in the pot and head of the column \revtext{are shown in Figure \ref{fig:dynamic_diagram_example2} a) and b)}, \revtext{while Figure \ref{fig:dynamic_diagram_example2} c) and d)} shows the activity coefficient trajectories in the pot and head.

Again, we performed individual simulations, where we only changed the alternative NRTL parameter sets $\palt^{\{1,2\}}$, $\palt^{\{1,3\}}$, $\palt^{\{1,4\}}$, and $\palt^{\{2,3\}}$, respectively. In this case, the largest differences can be observed for $\palt^{\{1,2\}}$, and $\palt^{\{1,4\}}$; these submixtures ((1)-(2), and (1)-(4)) are both azeotropic. Combining multiple alternative NRTL parameter sets does not intensify the effects on the activity coefficient trajectories, as it does in the previous example. The effect on the activity coefficient trajectories is \revtext{in the same range} as \revtext{for} the previous example \revtext{with a maximum worst deviation of $\devaltgamma{2}{10}{\mathrm{alt}_2} = 0.7378$}, \revtext{as are the effects on the liquid mole fractions. The maximum worst deviation of stage $1$ can be observed for component $4$ with $\devaltx{4}{1}{\mathrm{alt}_2} = 0.0073$. Again, the worst deviation is one order of magnitude higher for component $4$ on stage $10$ with $\devaltx{4}{10}{\mathrm{alt}_2} = 0.0584$.} Additional results (vapor mole fractions, temperatures and liquid molar holdup) are shown in the  \ref{app:additional_restults_mixture2} in Figures \ref{fig:dynamic_diagram_example2_vapor} and \ref{fig:dynamic_diagram_example2_temp_n}\revtext{, and additional worst deviations for simulation $\mathrm{alt}_2$ are listed in Table \ref{tab:delta_dynamic_results}}.

\begin{figure}[H]
	\centering
	\makebox[\textwidth]{\includegraphics[width=\textwidth]{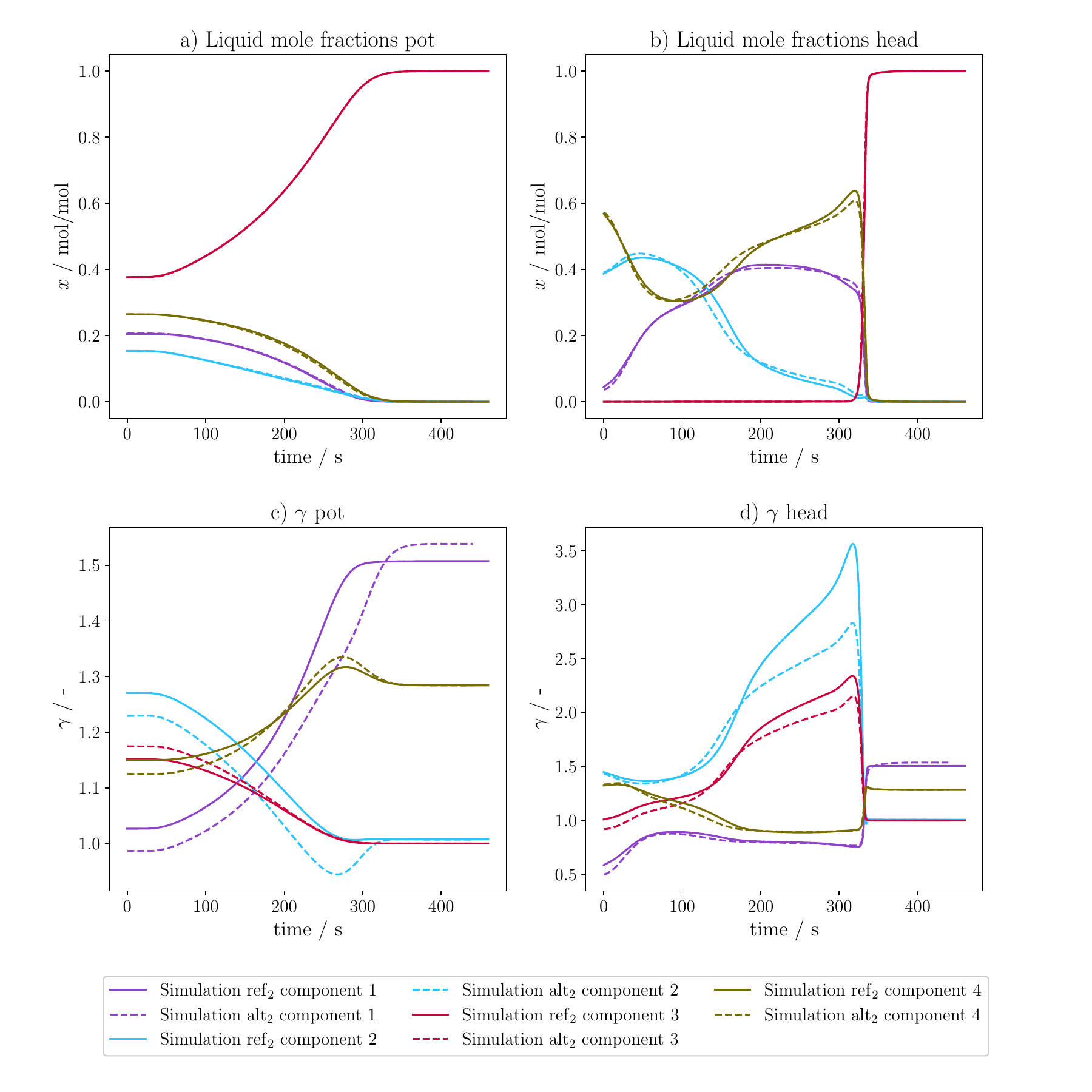}}
	\caption{Simulation results for example 2 acetone (1), methanol (2), butanol (3), and chloroform (4); liquid composition trajectories~\eqref{eq:liquid-composition-trajectories} for pot and head stage are shown in a) and b), and activity coefficient trajectories~\eqref{eq:activitiy-coefficient-trajectories} for pot and head stage are shown in c) and d)}
	\label{fig:dynamic_diagram_example2}
\end{figure}

\subsubsection{A five-component mixture} \label{sec:dynamic_results_example3}
The third example including the components 2-butanone (1), acetic acid (2), ethanol (3), ethyl acetate (4), and toluene (5) compares two different simulations with the following NRTL parameter settings:

\begin{itemize}
    \item Simulation $\text{ref}_3$: $\prefidx{3} = \left(\pref^{\{1,2\}}, \pref^{\{1,3\}}, \pref^{\{1,4\}}, \pref^{\{1,5\}}, \pref^{\{2,3\}}, \pref^{\{2,4\}}, \pref^{\{2,5\}}, \pref^{\{3,4\}}, \pref^{\{3,5\}}, \pref^{\{4,5\}} \right)$,
    \item Simulation $\text{alt}_3$: $\theta_{\text{alt}_3} = \left(\pref^{\{1,2\}}, \pref^{\{1,3\}}, \pref^{\{1,4\}}, \pref^{\{1,5\}}, \palt^{\{2,3\}}, \pref^{\{2,4\}}, \pref^{\{2,5\}}, \pref^{\{3,4\}}, \pref^{\{3,5\}}, \pref^{\{4,5\}} \right)$
\end{itemize}

The results are shown in Figure \ref{fig:dynamic_diagram_example3}, where we can again observe significant differences between the liquid mole fraction and activity coefficient trajectories \revtext{with maximum worse deviations of $\devaltx{2}{10}{\mathrm{alt}_3} = 0.1590$. Compared to the worst deviations of the liquid mole fractions on stage $10$ of the previous examples, this error is one order of magnitude higher. The worst deviation of the activity coefficient trajectories is maximal for component $3$ with $\devaltgamma{3}{10}{\mathrm{alt}_3} = 0.4756$}. Further results are shown in \ref{app:additional_results_mixture3} in Figures \ref{fig:dynamic_diagram_example3_vapor} and \ref{fig:dynamic_diagram_example3_temp_n}\revtext{, and further worst deviations for simulation $\mathrm{alt}_3$ are listed in Table \ref{tab:delta_dynamic_results}.}

\begin{figure}[H]
	\centering
	\makebox[\textwidth]{\includegraphics[width=\textwidth]{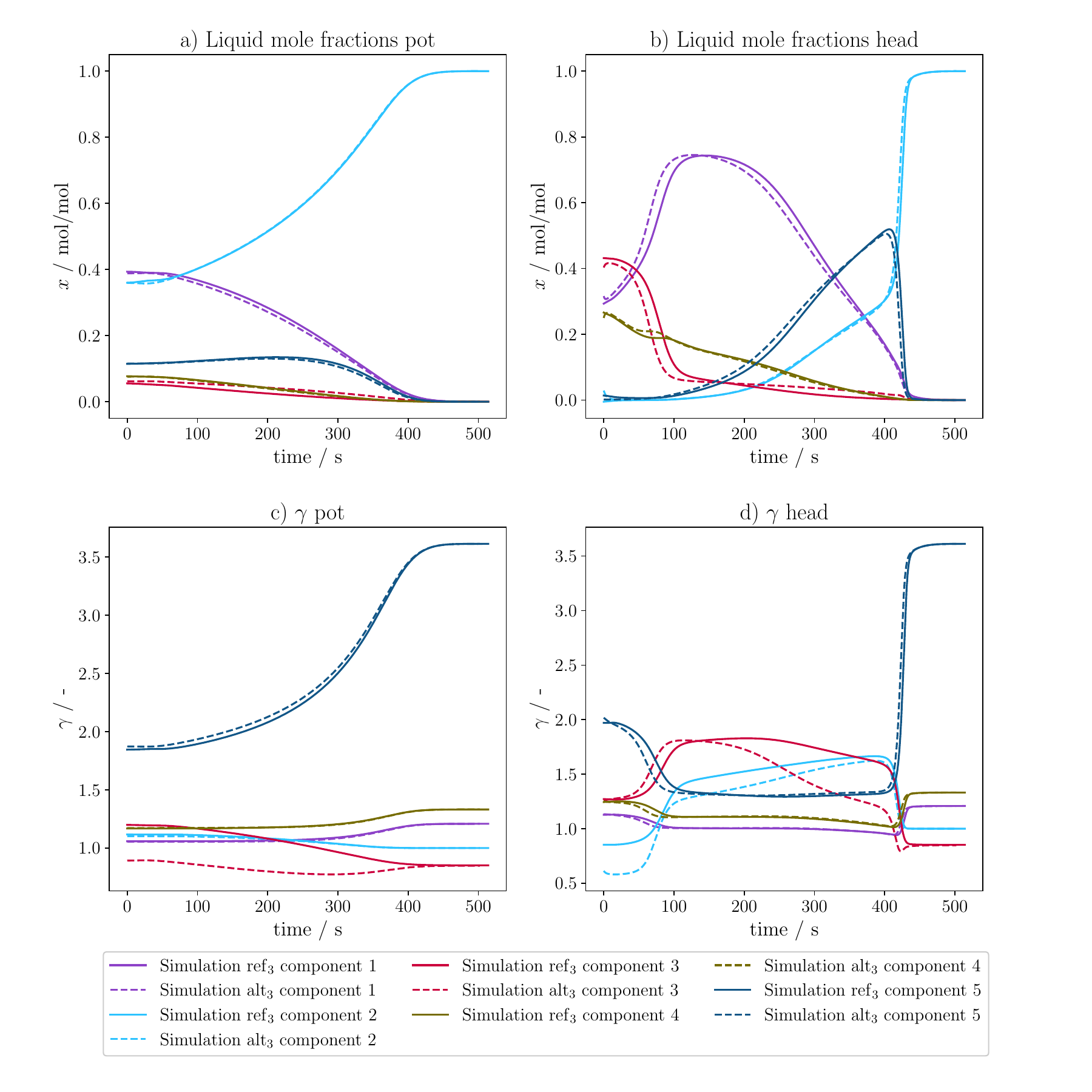}}
	\caption{Simulation results for example 3 2-butanone (1), acetic acid (2), ethanol (3), ethyl acetate (4), and toluene (5); liquid composition trajectories~\eqref{eq:liquid-composition-trajectories} for pot and head stage are shown in a) and b), and activity coefficient trajectories~\eqref{eq:activitiy-coefficient-trajectories} for pot and head stage are shown in c) and d)}
	\label{fig:dynamic_diagram_example3}
\end{figure}

\subsection{Discussion} \label{sec:study_discussion}
As demonstrated in Section \ref{sec:stationary_results}, the various NRTL parameter sets corresponding to the respective binary submixtures yield qualitatively the same activity coefficient and vapor-liquid equilibrium diagrams.

In Section \ref{sec:dynamic_results}, we observe that the application of different NRTL parameter sets in the batch distillation simulations introduced in Section \ref{sec:simulation} can lead to significant variations in the resulting trajectories. Especially the activity coefficients show large variations for some components. As the liquid mole fractions, vapor mole fractions, temperatures and activity coefficients are all linked through the extended Raoult law, and these in turn are linked to all other variables by the DAE system governing the batch distillation, these differences are observable in all related variables.

The results of our study indicate that these variations occur only in mixtures consisting of more than two components. Simulations of binary mixtures, regardless of the NRTL parameter set that was employed, consistently produce the same results within the squared error \eqref{eq:NRTL-parameter-estimation-problem} that is discussed in Section \ref{sec:stationary_results}\footnote{Simulations of the five binary submixtures with alternative NRTL parameters are shown in \ref{app:additional_results_binary}}.

The extent of these effects is influenced by the specific components included. The first and second examples (see Sections \ref{sec:dynamic_results_example1} and \ref{sec:dynamic_results_example2}) exhibit notable differences in the activity coefficient trajectories, while the differences in the other trajectories (liquid/vapor mole fractions, temperatures) are relatively minor. Conversely, the third example (see Section \ref{sec:dynamic_results_example3}) shows smaller variations in the activity coefficient trajectories, yet the differences in other trajectories remain significant. Overall, both the first and second examples show the largest discrepancies when alternative NRTL parameters for azeotropic submixtures are applied, whereas the third example (see Section \ref{sec:dynamic_results_example3}) shows considerable differences associated with the alternative NRTL parameter for a zeotropic submixture. \revtext{The worst deviations between the alternative and respective reference simulations are listed in Table \ref{tab:delta_dynamic_results}.}

The observed differences in the trajectories can be explained by the fact that $\palt^{\{i,j\}}$ was only determined by the activity coefficients of the binary submixture ($i$)-($j$) within the multi-component mixtures.
\revtext{As we demonstrated in Section \ref{sec:stationary_results}, the activity coefficient of component $k \in \{i,j\}$ depending on the bubble-point temperature $T^{\pref^{\{i,j\}}}(\bm{x})$ and the binary mixture composition $\bm{x}$ is approximately the same as the activity coefficient of component $k$ depending on $T^{\palt^{\{i,j\}}}(\bm{x})$ and $\bm{x}$. In short,}
\begin{equation} \label{eq:gamma_ref_equals_gamma_alt}
    \gamma_k^{\pref^{\{i,j\}}} \left( T^{\pref^{\{i,j\}}}(\bm{x}), \bm{x} \right) \approx \gamma_k^{\palt^{\{i,j\}}} \left( T^{\palt^{\{i,j\}}}(\bm{x}), \bm{x} \right) \qquad (\bm{x} \in \mathcal{X}_{ij}),
\end{equation}
\noindent for $k \in \{i,j\}$. \revtext{Note that this equation holds true for all binary compositions on the simplex edge $\mathcal{X}_{ij}$ when $k$ is either $i$ or $j$. It does not necessarily hold true for all other compositions in the ternary simplex.}


This approach does not include the interactions among components in a multi-component mixture. Consequently, we observe minimal differences in the stationary results of this study (see Section \ref{sec:stationary_results}), while significantly larger discrepancies arise in the dynamic results (refer to Section \ref{sec:dynamic_results}). This can be \revtext{further} demonstrated by analyzing the ternary diagram of the multi-component mixture 1 with the components acetone (1),  methanol (2), and butanol (3) in Figure \ref{fig:ternary_plots_example1}. In the dynamic results of this mixture presented in Section \ref{sec:dynamic_results_example1}, we observe the most significant differences in the activity coefficient of component (1) \revtext{for simulation $\mathrm{alt}'_1$ with a worst deviation of $\devaltgamma{1}{10}{\mathrm{alt}'_1} = 0.5746$}. To further investigate this, we sample liquid concentrations $\bm{x}$ from the ternary mixture, solve the VLE using $\pref$, and $\theta_{\text{alt}_1}$, and generate a heatmap for $i=1$ of the metric
\begin{equation}
    \bm{\Delta}_{\gamma_i} = \left(\left|\gamma_i^{\pref}\big((T^{\pref}(\bm{x}^l), \bm{x}^l\big) - \gamma_i^{\theta_{\text{alt}_1}}\big(T^{\theta_{\text{alt}_1}}(\bm{x}^l), \bm{x}^l\big)\right|\right)_{l \in \{1,\dots,N\}} \qquad (\C),
\end{equation}
with $N=860$, which is the number of points sampled from the ternary mixture. The ternary diagram with the heatmap of $\bm{\Delta}_{\gamma_1}$ is shown in Figure \ref{fig:ternary_plots_example1}. Additionally, the trajectories of the liquid mole fractions in both the pot and head stages of simulations $\text{ref}_1$, and $\text{alt}_1$ are visualized in the figure.

\begin{figure}[H]
	\centering
	\makebox[\textwidth]{\includegraphics[width=\textwidth]{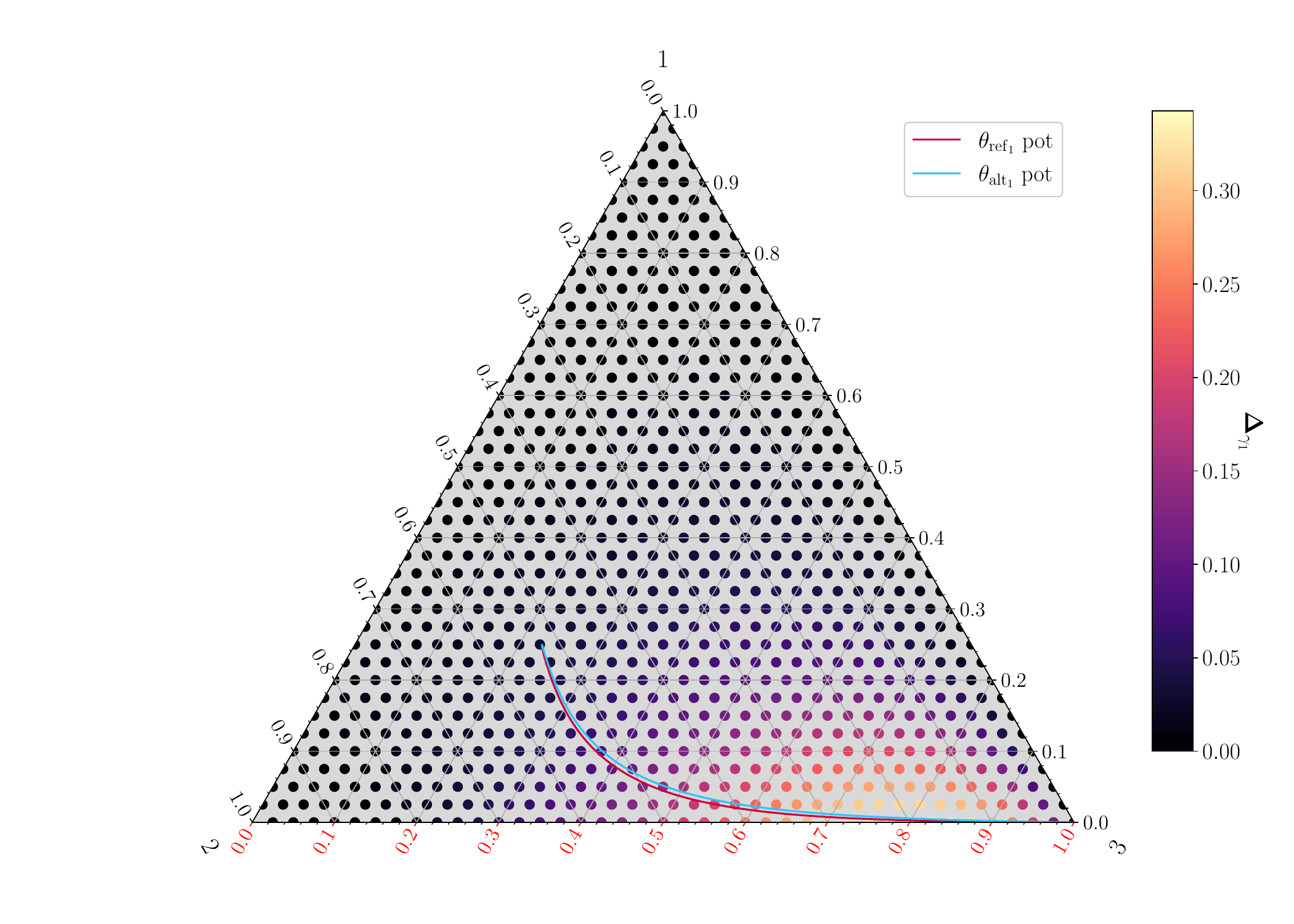}}
	\caption{Ternary diagram of mixture (1), (2), and (3); the metric $\bm{\Delta}_{\gamma_1}$ is plotted as a heatmap within the ternary diagram; the liquid mole fractions trajectories in the pot of simulations $\text{ref}_1$ and $\text{alt}_1$ are visualized by the red and blue line-plots; the red-marked axis indicates that a high $\bm{\Delta}_{\gamma_1}$ can be expected for the binary sub-system (2)-(3)}
	\label{fig:ternary_plots_example1}
\end{figure}

From Figure \ref{fig:ternary_plots_example1}, it is evident that the trajectories of $\bm{x}^{1,\pref}$ and $\bm{x}^{1,\theta_{\text{alt1}}}$ differ, as it is already visualized in Figure \ref{fig:dynamic_diagram_example1} in Section \ref{sec:dynamic_results_example1}. A second important observation is that these trajectories pass through the region where $\bm{\Delta}_{\gamma_1}$ reaches its maximum value of $0.34$, which corresponds to the differences shown in Figure \ref{fig:dynamic_diagram_example1}, \revtext{and the worst deviations listed in Table \ref{tab:delta_dynamic_results}} for the activity coefficient trajectories of component (1) in the pot and head stages. Moreover, Figure \ref{fig:ternary_plots_example1} illustrates that $\bm{\Delta}_{\gamma_1}$ equals $0$ along the simplex edge for the binary submixtures (1)-(2) and (1)-(3), but it does not equal $0$ along the simplex edge for the binary submixture (2)-(3) nor within the simplex itself.

This behavior can also be observed for the metrics $\bm{\Delta}_{\gamma_2}$, and $\bm{\Delta}_{\gamma_3}$ in Figures \ref{fig:ternary_plots_example1_2} and \ref{fig:ternary_plots_example1_3} in \ref{app:additional_results_mixture1}, although $\bm{\Delta}_{\gamma_2}$ and $\bm{\Delta}_{\gamma_3}$ are considerably smaller than $\bm{\Delta}_{\gamma_1}$. This corresponds to the results we observe in Figure \ref{fig:dynamic_diagram_example1}, where the trajectories of the activity coefficients for components (2), and (3) show also considerably smaller differences to the reference.

For the example mixtures 2 and 3 with four and five components, this cannot be visualized as easily as for the first mixture. Nonetheless, we can sample the quaternary and quinary mixture space and compare the results. For this sake, we introduce the following mean deviation and worst-case deviation metrics:

\begin{equation}
    \devmeangamma{i} = \frac{1}{N} \sum_{l=1}^N |\gamma_i^{\pref}\left(T^{\pref}(\bm{x}^l), \bm{x}^l) \right) - \gamma_i^{\palt}\left(T^{\palt}(\bm{x}^l), \bm{x}^l) \right)| \qquad (\C),
\end{equation}

\begin{equation}
    \devmeanT = \frac{1}{N} \sum_{l=1}^N |T^{\pref}(\bm{x}^l) - T^{\palt}(\bm{x}^l)|,
\end{equation}

\begin{equation}
    \devworstgamma{i} = \max_{l=1,...,N}|\gamma_i^{\pref}\left(T^{\pref}(\bm{x}^l), \bm{x}^l) \right) - \gamma_i^{\palt}\left(T^{\palt}(\bm{x}^l), \bm{x}^l) \right)| \qquad (\C),
\end{equation}

\begin{equation}
    \devworstT = \max_{l=1,...,N} |T^{\pref}(\bm{x}^l) - T^{\palt}(\bm{x}^l)|,
\end{equation}
where $N$ refers to the sample points in the simplex, and is $860$, $1\,752$, and $10\,422$ for mixtures 1, 2, and 3, respectively. We sample the ternary compositions equidistantly by $41$ points on each edge of the ternary simplex boundary, and examples 2 and 3 with $21$ points on each edge of the corresponding quaternary and quinary simplex boundaries. The resulting deviation metrics for the different alternative simulations compared to the corresponding reference simulation are listed in Table \ref{tab:metrics1} and show similar error metrics.

\begin{table}[h]
    \centering
        \begin{tabular}{|c|c|c|c|c|c|c|c|c|c|c|c|c|}
            \hline
            Id & $\devmeangamma{1}$ & $\devmeangamma{2}$ & $\devmeangamma{3}$ & $\devmeangamma{4}$ & $\devmeangamma{5}$ & $\devworstgamma{1}$ & $\devworstgamma{2}$ & $\devworstgamma{3}$ & $\devworstgamma{4}$ & $\devworstgamma{5}$ & $\devmeanT$ & $\devworstT$\\
            \hline
            $\text{alt}_1$ & $0.051$ & $0.029$ & $0.030$ & - & - & $0.343$ & $0.117$ & $0.034$ & - & - & $0.334$ & $2.700$\\
            $\text{alt}'_1$ & $0.075$ & $0.035$ & $0.040$ & - & - & $0.573$ & $0.155$ & $0.131$ & - & - & $0.598$ & $3.650$ \\
            $\text{alt}_2$ & $0.045$ & $0.065$ & $0.029$ & $0.025$ & - & $0.570$ & $0.0760$ & $0.150$ & $0.110$ & - & $0.399$ & $3.650$\\ 
            $\text{alt}_3$ & $0.028$ & $0.113$ & $0.129$ & $0.024$ & $0.025$ & $0.346$ & $0.528$ & $0.491$ & $0.331$ & $0.558$ & $0.511$ & $2.140$\\
            \hline
        \end{tabular}
    \caption{Metrics $\devmeangamma{i}$, $\devworstgamma{i}$, $\devmeanT$, and $\devworstT$ for simulations $\text{alt}_1$, $\text{alt}'_1$, $\text{alt}_2$, and $\text{alt}_3$ compared to the respective reference simulations $\text{ref}_1$, $\text{ref}_2$, and $\text{ref}_3$}
    \label{tab:metrics1}
\end{table}

\section{Summary and conclusions}

In this article, we developed a simulation for batch distillation processes featuring a rigorous novel yet simple index reduction method. An important advantage over other index reduction techniques known from the literature is that our approach leads to a relatively simple index-reduced system. In particular, our index-reduced system does not require the computation of the partial derivatives of the underlying vapor-liquid equilibrium model. Another advantage of the index reduction method proposed here is that it allows for the rigorous proof of the index-$1$ property under fairly general conditions. With the approaches from the literature, this is considerably more difficult.

We then utilize the developed batch distillation simulation to analyze the effects of multiplicities in activity coefficients on batch distillation by using alternative NRTL parameter sets leading to indistinguishable stationary activity coefficient and VLE diagrams of the binary submixtures. The study shows that, in general, there are considerable differences in the simulation results concerning the activity coefficient, temperature, and liquid and vapor mole fraction trajectories. Since the variables are all linked through the DAE system, the differences can in fact be observed in all trajectories. While there are particularly high differences in the activity coefficients for some components, we can conclude from the examples analyzed in this study, that the simulation model still provides \revtext{qualitatively} the same dynamic profiles with alternative NRTL parameters. \revtext{The maximum worst deviation from the reference simulation regarding the liquid mole fractions is $\devaltx{2}{10}{\mathrm{alt}_3} = 0.1590$ (see Table \ref{tab:delta_dynamic_results}).} 

Another important finding of the study is that these variations occur only in mixtures consisting of more than two components, and the extent of the variations depends on the components present in the mixture. This can be explained by the fact that the NRTL parameters are fitted only to the activity coefficients of the binary submixtures, which is a common practice. This means that the alternative NRTL parameters with certainty only perform well on a subset of all possible concentrations in the ternary, quaternary, or quinary simplex of the three examples, namely the subset of binary concentrations within the mixture. Yet, as demonstrated in this study, it is not guaranteed that the alternative NRTL parameters perform well for all possible concentrations of a multi-component mixture.

In light of this, future work should revise the common practice of the NRTL parameter estimation problem. We think that the differences we show in this paper can be improved by fitting NRTL parameters for multi-component mixtures not only on pair-wise experimental data, but for the entire mixture space, including experimental data for more possible concentrations of non-binary mixtures.

\section*{Acknowledgment}
We gratefully acknowledge funding from the Deutsche Forschungsgemeinschaft (DFG, German
Research Foundation) within the research unit ``FOR 5359: Deep Learning on Sparse Chemical Process Data''.

\section*{Author contributions}
Jennifer Werner: Conceptualization, Software, Validation, Investigation, Data curation, Writing -- orignal draft, Visualization. 
Jochen Schmid: Conceptualization, Methodology, Formal analysis, Validation, Investigation, Data curation, Writing -- orignal draft, Supervision. 
Lorenz T. Biegler: Conceptualization, Validation, Writing -- review \& editing.
Michael Bortz: Conceptualization, Resources, Writing -- review \& editing, Supervision, Project administration, Funding acquisition. 

\bibliographystyle{unsrtnat}
\bibliography{references} 

\appendix
\section{Supplementary information on the simulation model}

\subsection{Submodels}
\label{app:submodels}

\revtext{
In this section, we record the specific thermodynamic submodels that we use in our simulation examples from Section~\ref{sec:study}. As pointed out before in Assumption~\ref{ass:submodels}, our general solution approach -- and in particular our index-reduction approach -- is valid also for other choices of the submodels. 
}

\subsubsection{Vapor-liquid equilibria}
\label{app:vle}

According to Raoult's law \cite{poling2001properties}, we assume that the vapor mole fractions of a $C$-component mixture at vapor-liquid equilibrium are related to the liquid mole fractions in the following way:
\begin{align} \label{eq:Raoult}
\fvlei(P,T,\bm{x}) := \frac{\Psati(T)}{P} \gamma_i(T, \bm{x}) x_i \qquad (i \in \{1,\dots,C\}). 
\end{align}
In this definition, $\Psati$ and $\gamma_i$ stand for the saturation pressure and the activity coefficient of component $i$, respectively. We model the saturation pressure (unit: Pa) according to the extended Antoine equation \cite{gmehling2019chemical}, that is,
\begin{align}
\Psati(T) := \exp\left(A_i + \frac{B_i}{C_i + T} + D_iT + E_i \ln T + F_i T^{G_i} \right),
\end{align}
where $A_i$, $B_i$, $C_i$, $D_i$, $E_i$, $F_i$, and $G_i$ are component-dependent fixed parameters. Also, for the activity coefficients, we use the non-random two-liquid model \cite{renon1968local, poling2001properties}, that is,
\begin{align}
    \ln \gamma_i(T, \bm{x}) & := \frac{\sum_{j=1}^C x_j \tau_{ji} G_{ji}}{\sum_{k=1}^C x_k G_{ki}} + \sum_{j=1}^C \frac{x_j G_{ij}}{\sum_{k=1}^C x_k G_{kj}} \left(\tau_{ij} - \frac{\sum_{m=1}^C x_m \tau_{mj} G_{mj}}{\sum_{k=1}^C x_k G_{kj}} \right) \\
    G_{ij}(T) & := \exp\left(-\alpha_{ij} \tau_{ij} \right) \\
    \tau_{ij}(T) & := a_{ij} + \frac{b_{ij}}{T} \qquad (i \neq j) \qquad \text{and} \qquad \tau_{ii}(T) := 0.
\end{align}
In the entire paper, we set the non-randomness parameter $\alpha_{ij} := 0.3$, following the recommendations in~\cite{renon1968local}. Consequently, the only remaining model parameters are the binary interaction parameters $a_{ij}, b_{ij}$ for all binary submixtures $\{i,j\} \subset \{1,\dots,C\}$. We denote the collection of the binary interaction parameters for a given binary submixture $\{i,j\} \subset \{1,\dots,C\}$ by
\begin{align}
\theta^{\{i,j\}} := (a_{ij},b_{ij},a_{ji},b_{ji}) \in \R^4
\end{align}
Analogously, we denote the collection of all binary interaction parameters by
\begin{align}
\theta := (\theta^B)_{B \in \mathcal{B}_C} \in \R^{4 d_C},
\end{align}  
where $\mathcal{B}_C$ denotes the set of all two-element subsets of $\{1,\dots,C\}$ and $d_C := |\mathcal{B}_C| = \binom{C}{2}$ is the number of all two-element subsets of $\{1,\dots,C\}$. 
The extended Antoine parameters used for the components in Section \ref{sec:study} are listed in Table \ref{tab:extended_antoine_paras} and they were obtained through Aspen.

\begin{table}[h]
    \centering
    \begin{tabular}{|c|c|c|c|c|c|c|c|}
        \hline
        Component & $A$ & $B$ & $C$ & $D$ & $E$ & $F$ & $G$ \\
        \hline
        acetone & $69.006$ & $-5599.6$ & $0.0$ & $0.0$ & $-7.0985$ & $6.2237\cdot 10^{-6}$ & $2.0$ \\
        methanol & $82.718$ & $-6904.5$ & $0.0$ & $0.0$ & $-8.8622$ & $7.4664\cdot 10^{-6}$ & $2.0$ \\
        butanol & $106.29$ & $-9866.4$ & $0.0$ & $0.0$ & $-11.655$ & $1.08 \cdot 10^{-17}$ & $6.0$ \\
        chloroform & $146.43$ & $-7792.3$ & $0.0$ & $0.0$ & $-20.614$ & $0.024578$ & $1.0$ \\
        2-butanone & $84.53$ & $-6787.2$ & $0.0$ & $0.0$ & $-9.2336$ & $9.0891 \cdot 10^{-9}$ & $3.0$ \\
        acetic acid & $53.27$ & $-6304.5$ & $0.0$ & $0.0$ & $-4.2985$ & $8.89 \cdot 10^{-18}$ & $6.0$ \\
        ethanol & $73.304$ & $-7122.3$ & $0.0$ & $0.0$ & $-7.1424$ & $2.89 \cdot 10^{-6}$ & $2.0$ \\
        ethyl acetate & $66.824$ & $-6227.6$ & $0.0$ & $0.0$ & $-6.41$ & $1.79\cdot 10^{-17}$ & $6.0$ \\
        toluene & $76.945$ & $-6729.8$ & $0.0$ & $0.0$ & $-8.179$ & $5.30\cdot 10^{-6}$ & $2.0$\\
        \hline 
    \end{tabular}
    \caption{Extended Antoine parameters of the components in Section \ref{sec:study}}
    \label{tab:extended_antoine_paras}
\end{table}

\revtext{ 
	In order to indicate the dependence of the activity coefficient models on the interaction parameters $\theta$, we write $\gamma_k = \gamma_k^\theta$ in Section~\ref{sec:study}. In this context, it should be noticed that for composition vectors
	\begin{align}
		\bm{x} \in \mathcal{X}_{ij} = \{\bm{x} \in \mathcal{X}: x_l = 0 \text{ for all } l \notin \{i,j\}\}
	\end{align}
	corresponding to the binary submixture $\{i,j\}$, 
	the model predictions $\gamma_i(T,\bm{x})$ and $\gamma_j(T,\bm{x})$ actually 
	depend only on the binary interaction parameters $\theta^{\{i,j\}}$ for the considered submixture $\{i,j\}$ (but not on any other submixtures). 
	In short,
	\begin{align} \label{eq:multi-component-NRTL-model-reduces-to-binary-NRTL-model-on-binary-submixtures}
		\gamma_k^\theta(T,\bm{x}) = \gamma_k^{\theta^{\{i,j\}}}(T,\bm{x}) \qquad (\bm{x} \in \mathcal{X}_{ij})
	\end{align}
	for $k \in \{i,j\}$. 
}

\subsubsection{Vapor and liquid molar enthalpies}
\label{app:molar_liquid_vapor_enthalpies}

As usual, we assume the vapor and liquid molar enthalpies to be given in the following way:
\begin{align} \label{eq:fhv-and-fhl}
\fhv(T,\bm{y}) := \bm{y} \cdot \hv(T) = \sum_{i=1}^C y_i \hvi(T)
\qquad \text{and} \qquad
\fhl(T,\bm{x}) := \bm{x} \cdot \hl(T) = \sum_{i=1}^C x_i \hli(T).
\end{align}
We use the following model for the partial molar liquid enthalpy (unit: J/mol) of component $i \in \{1, \dots, C\}$ at temperature $T$:
\begin{equation} \label{eq:hli}
    \hli(T) = \int_{T^{\text{ref}}}^T \cli(T') \text{d}T',
\end{equation}
where $T^\text{ref}$ is an arbitrary but fixed reference temperature in K and $\cli(T)$ is the liquid molar heat capacity (unit: J/(mol K)) of component $i \in  \{1,\dots,C\}$ at temperature $T$. We use the following fourth-degree polynomial model to describe it (DIPPR model 100):
\begin{equation} \label{eq:cli}
    \cli(T) := C_{1i} + C_{2i}T + C_{3i}T^2 + C_{4i}T^3 + C_{5i} T^4
\end{equation}
Also, for the partial molar vapor enthalpy (unit: J/mol) of component $i \in \{1,\dots,C\}$ at temperature $T$, we use the following model:
\begin{equation} \label{eq:hvi}
   \hvi(T) := \hli(T) + h_{\text{vap}, i}(T) = \int_{T^\text{ref}}^T \cli(T') \text{d}T' + h_{\text{vap}, i}(T),
\end{equation}
where $h_{\text{vap}, i}(T)$ is the vaporization enthalpy for component $i$, for which we use the Clausius-Clapeyron model \cite{poling2001properties}:
\begin{equation}
    \hvapi(T) := \text{R}T^2 \frac{\text{d}}{\text{d}T} \ln \Psati(T).
\end{equation}
As usual, $\text{R} := \SI{8.31}{\kilogram \cdot \meter^2 \cdot \second^{-2} \cdot \kelvin^{-1} \cdot \mole^{-1}}$ denotes the universal gas constant. The DIPPR 100 parameters we used for the components in Section \ref{sec:study} are listed in Table \ref{tab:dippr_100_parameters} and they were obtained through Aspen.

\begin{table}[h]
    \centering
    \begin{tabular}{|c|c|c|c|c|c|}
        \hline
        Component & $C_1$ & $C_2$ & $C_3$ & $C_4$ & $C_5$ \\
        \hline
        acetone & $135.6$ & $-0.177$ & $0.00028367$ & $6.89\cdot10^{-7}$ & $0.0$ \\
        methanol & $256.04$ & $-2.7414$ & $0.014777$ & $-0.000035078$ & $3.27\cdot10^{-8}$ \\
        butanol & $191.2$ & $-0.7304$ & $0.0022998$ & $0.0$ & $0.0$ \\
        chloroform & $124.85$ & $-0.16634$ & $0.00043209$ & $0.0$ & $0.0$ \\
        2-butanone & $3162.314$ & $-32.5937$ & $0.1324$ & $-0.000238$ & $1.62\cdot10^{-7}$ \\
        acetic acid & $695.074$ & $-6.3595$ & $0.0254$ & $-4.73\cdot10^{-5}$ & $3.51\cdot10^{-8}$ \\
        ethanol & $8412.349$ & $-89.4774$ & $0.36054$ & $-0.000646$ & $4.38\cdot10^{-7}$ \\
        ethyl acetate & $4633.987$ & $-48.718$ & $0.1978$ & $-0.000355$ & $2.41\cdot10^{-7}$ \\
        toluene & $451.303$ & $-4.222301$ & $0.01928$ & $-3.60\cdot10^{-5}$ & $2.57\cdot10^{-8}$ \\
        \hline
    \end{tabular}
    \caption{DIPPR 100 parameters of the components in Section \ref{sec:study}}
    \label{tab:dippr_100_parameters}
\end{table}

\subsubsection{Stage holdup}
\label{app:holdup}

We assume that the liquid holdup $n^j$ on every stage $j \in \{2,\dots,S\}$ except the pot is proportional to the liquid downflow rate $L^{j-1}$ to stage $j - 1$. In other words, this means that our holdup model is given by 
\begin{equation} \label{eq:fhold}
    \fhold(L^{j-1}) := c_{\text{holdup}}\frac{h}{S-1} L^{j-1} \qquad (j \in \{2,\dots,S\})  
\end{equation}
where $h$ is the height of the column above the pot, so that $\frac{h}{S-1}$ is the height of a single stage. Also, $c_\text{holdup}$ is defined as follows:
\begin{equation}
    c_\text{holdup} := \frac{n_\text{ref} / 100}{B_{\mathrm{ref}} / 3600},
\end{equation}
where $n_\text{ref}$ (expressed in \%) is a reference volumetric holdup, and $B_{\mathrm{ref}}$ is a reference liquid load (expressed in \SI{}{\meter \per \hour}). In this study, the parameters $n_\text{ref}$ and $B_{\mathrm{ref}}$ are set to \SI{4.2}{\percent} and \SI{5}{\meter \per \hour}, respectively.

\subsection{Index reduction by differentiation of the vapor summation equations}
\label{app:index-reduction-y-sum-differentiation}

\revtext{
In this section, we outline the conventional index reduction approach from the literature~\cite{cervantes1998, cervantes2000, biegler2002, ragunathan2004, qian2023nonlinear} to make the differences to our approach clear. In essence, the conventional approach is based on differentiating the vapor summation equations 
to obtain algebraic versions of the enthalpy balance equations. 
Specifically, the conventional approach in its simplest form proceeds as follows: In order to get rid of the zero-row problem (first bullet point at the end of Section~\ref{sec:system-equations}), the component mole balance equations for component $C$ are dropped. As a consequence, the liquid mole fractions $x_C^1, \dots, x_C^S$ are no longer differential variables, but they become algebraic variables and the derivatives of the liquid summation equations w.r.t.~these new algebraic variables become non-zero. In particular, this fixes the zero-row problem. In return for the new algebraic variables $x_C^1, \dots, x_C^S$, however, $S$ new algebraic equations have to be added because otherwise the Jacobian of the algebraic equations w.r.t.~the algebraic variables is not even a square matrix. In the conventional approach, such new algebraic equations are obtained by combining the vapor summation equations~\eqref{eq:y-sum,tot-and-modif-comp-mole-balance} (Proposition~\ref{prop:equivalence-of-basic-and-modified-basic-system}) with the vapor-fraction-defining equations~\eqref{eq:y-def,tot-and-modif-comp-mole-balance}. In this manner, one arrives at the equilibrium summation equations~\eqref{eq:vlesum,y-differentiation} below. It should be noticed, however, that these new algebraic equations~\eqref{eq:vlesum,y-differentiation} still do not contain the vapor stream variables $V^1, \dots, V^S$ so that the zero-column problem (second bullet point at the end of Section~\ref{sec:system-equations}) is not addressed yet. 
In order to address this, the equilibrium summation equations~\eqref{eq:vlesum,y-differentiation} are differentiated along solutions, which leads to 
\begin{align} \label{eq:T-dot}
\dot{T}^j = -\frac{\sum_{i=1}^C \big( \partial_{\bm{x}}\fvlei(P,T^j,\bm{x}^j) \cdot \dot{\bm{x}}^j + \partial_P \fvlei(P,T^j,\bm{x}^j) \dot{P} \big)}{\sum_{i=1}^C \partial_T \fvlei(P,T^j,\bm{x}^j)}
\end{align}
provided that $\sum_{i=1}^C \partial_T \fvlei(P(t),T^j(t),\bm{x}^j(t)) \ne 0$ along the solution. Additionally, the enthalpy-defining equations~\eqref{eq:enth,tot-and-modif-comp-mole-balance} are differentiated along solutions and then~\eqref{eq:T-dot} is used to obtain
\begin{align} \label{eq:H-dot}
\dot{H}^j 
&= \dot{n}^j \fhl(T^j,\bm{x}^j) + n^j \big( \partial_T \fhl(T^j,\bm{x}^j) \dot{T}^j + \partial_{\bm{x}} \fhl(T^j,\bm{x}^j) \cdot \dot{\bm{x}}^j \big) \nonumber \\
&= \dot{n}^j \fhl(T^j,\bm{x}^j) + n^j \bm{a}(P,T^j,\bm{x}^j) \cdot \dot{\bm{x}}^j + n^j b(P,T^j,\bm{x}^j) \dot{P},
\end{align}
where $\bm{a}(P,T,\bm{x})$ and $b(P,T,\bm{x})$ are defined as follows:
\begin{align}
\bm{a}(P,T,\bm{x}) &:= \partial_{\bm{x}} \fhl(T,\bm{x}) - \partial_T \fhl(T,x) \frac{\sum_{i=1}^C \partial_{\bm{x}}\fvlei(P,T,\bm{x})}{\sum_{i=1}^C \partial_T \fvlei(P,T,\bm{x})} \in \R^C, \label{eq:a(P,T,x)-def}\\
b(P,T,\bm{x}) &:= - \partial_T \fhl(T,x) \frac{\sum_{i=1}^C \partial_P \fvlei(P,T,\bm{x})}{\sum_{i=1}^C \partial_T \fvlei(P,T,\bm{x})} \in \R. \label{eq:b(P,T,x)-def}
\end{align}
Inserting the equations~\eqref{eq:H-dot} into the enthalpy balance equations~\eqref{eq:enth,tot-and-modif-comp-mole-balance} and using the total and component mole balance equations~\eqref{eq:tot-mole,tot-and-modif-comp-mole-balance} and~\eqref{eq:comp-mole,tot-and-modif-comp-mole-balance} to eliminate the time derivatives $\dot{n}^j$ and $\dot{\bm{x}}^j$, one then obtains algebraic versions of the enthalpy balance equations, namely the algebraicized enthalpy balance  equations~\eqref{eq:enth,y-sum-differentiation} below. Summarizing, the conventional index reduction approach leads to the alternative system equations \eqref{eq:tot-mole,y-sum-differentiation}-\eqref{eq:vlesum,y-differentiation}. Specifically, the alternative system equations consist of the total mole balances
\begin{subequations} \label{eq:tot-mole,y-sum-differentiation}
	\begin{align}
		\dot{n}^1 & = L^1 - V^1  \label{eq:tot-mole-1,y-sum-differentiation} \\ 
		\dot{n}^j & = L^j - V^j - L^{j-1} + V^{j-1} \label{eq:tot-mole-j,y-sum-differentiation} \\
		\dot{n}^S &= -\epsilon V^S - L^{S-1} + V^{S-1}, \label{eq:tot-mole-S,y-sum-differentiation}     
    \end{align}
\end{subequations}
of the component mole balances for the first $C-1$ components 
\begin{subequations} \label{eq:comp-mole,y-sum-differentiation}
    \begin{align}
		\dot{x}_i^1 & = \big( L^1 (x_i^2 - x_i^1) - V^1 (y_i^1 - x_i^1)\big)/n^1 \qquad (i\in \{1,\dots,C-1\})  \label{eq:comp-mole-1,y-sum-differentiation} \\        
        \dot{x}_i^j & = \big( L^j (x_i^{j+1} - x_i^j) - V^j (y_i^j - x_i^j) + V^{j-1} (y_i^{j-1} - x_i^j) \big)/n^j \qquad (i\in \{1,\dots,C-1\}) \label{eq:comp-mole-j,y-sum-differentiation} \\
        \dot{x}_i^S &= \big( \epsilon V^S (x_i^S - y_i^S) + V^{S-1} (y_i^{S-1} - x_i^S) \big)/n^S \qquad (i\in \{1,\dots,C-1\}), \label{eq:comp-mole-S,y-sum-differentiation}
    \end{align}
\end{subequations}
and of the algebraicized enthalpy balances
\begin{subequations} \label{eq:enth,y-sum-differentiation}
    \begin{align} 
    		&(L^1-V^1) \fhl(T^1,\bm{x}^1) + \bm{a}(P,T^1,\bm{x}^1) \cdot \big( L^1 (\bm{x}^2 - \bm{x}^1) - V^1 (\bm{y}^1 - \bm{x}^1)\big) + n^1 b(P,T^1,\bm{x}^1) \dot{P} \nonumber \\
    		& \quad = L^1 \fhl(T^2,\bm{x}^2) - V^1 \fhv(T^1,\bm{y}^1) + Q \label{eq:enth-1,y-sum-differentiation} \\
    		&(L^j - V^j - L^{j-1} + V^{j-1}) \fhl(T^j,\bm{x}^j) + \bm{a}(P,T^j,\bm{x}^j) \cdot \big( L^j (\bm{x}^{j+1} - \bm{x}^j) - V^j (\bm{y}^j - \bm{x}^j) \nonumber \\
    		&\qquad \qquad + V^{j-1}(\bm{y}^{j-1} - \bm{x}^j) \big) + n^j b(P,T^j,\bm{x}^j) \dot{P} \nonumber \\
    		& \quad = L^j \fhl(T^{j+1},\bm{x}^{j+1}) - V^j \fhv(T^j,\bm{y}^j) - L^{j-1} \fhl(T^j,\bm{x}^j) + V^{j-1}\fhv(T^{j-1},\bm{y}^{j-1}) \label{eq:enth-j,y-sum-differentiation} \\
        &(-\epsilon V^S - L^{S-1} + V^{S-1}) \fhl(T^S,\bm{x}^S) + \bm{a}(P,T^S,\bm{x}^S) \cdot \big( -\epsilon V^S (\bm{y}^S - \bm{x}^S) + V^{S-1}(\bm{y}^{S-1} - \bm{x}^S) \big) \nonumber \\
        &\qquad \qquad + n^S b(P,T^S,\bm{x}^S) \dot{P} \nonumber \\
        & \quad = (1-\epsilon)V^S \fhl(\Tcond,\bm{y}^S) - V^S \fhv(T^S,\bm{y}^S) - L^{S-1} \fhl(T^S,\bm{x}^S) + V^{S-1}\fhv(T^{S-1},\bm{y}^{S-1}). \label{eq:enth-S,y-sum-differentiation} 
    \end{align}
\end{subequations}
Additionally, they consist of the liquid summation conditions, the equilibrium-defining equations, the enthalpy-defining equations, and the holdup equations:
\begin{align}
		x_C^j &= 1 - \sum_{i=1}^{C-1} x_i^j \qquad (j \in \{1,\dots,S\}) \label{eq:x-sum,y-sum-differentiation} \\
        y_i^j &= \fvlei(P,T^j,\bm{x}^j) \qquad (i \in \{1,\dots,C\}, j \in \{1,\dots,S\}) \label{eq:y-def,y-sum-differentiation} \\
        H^j & = n^j \fhl(T^j,\bm{x}^j) \qquad (j \in \{1,\dots,S\}) \label{eq:enth-def,y-sum-differentiation} \\
        n^j & = \fhold(L^{j-1}) \qquad (j \in \{2,\dots,S\}), \label{eq:holdup-j,y-sum-differentiation}
\end{align} 
And finally, the alternative system equations include the equilibrium summation equations, that is,
\begin{align} \label{eq:vlesum,y-differentiation}
	\sum_{i=1}^C \fvlei(P,T^j,\bm{x}^j) = 1 \qquad (j \in \{1,\dots,S\}). 
\end{align}
It is straightforward to see that this alternative system~\eqref{eq:tot-mole,y-sum-differentiation}-\eqref{eq:vlesum,y-differentiation} is equivalent to the original system~\eqref{eq:tot-mole,tot-and-modif-comp-mole-balance}-\eqref{eq:holdup,tot-and-modif-comp-mole-balance} (Proposition~\ref{prop:equivalence-of-modified-basic-and-alternative-system}).
} 

\subsection{Simple equivalences}
\label{app:simple-equivalences}

In this section, we record the simple and straightforward equivalences of different variants of the system equations referred to in Sections~\ref{sec:system-equations} and~\ref{sec:index-reduction}. In the following, we use the following notion of equivalence of two systems of differential-algebraic equations. We say that two systems of differential-algebraic equations are \emph{equivalent as long as a certain condition $c$ is satisfied} iff their sets of solutions satisfying $c$ are equal (that is, every solution of the first system of equations satisfying $c$ is also a solution of the second system of equations satisfying $c$, and vice versa). 
\revtext{
We begin by showing that the system equations~\eqref{eq:comp-mole,tot-and-modif-comp-mole-balance}-\eqref{eq:holdup,tot-and-modif-comp-mole-balance} are equivalent to the even more basic system which instead of the re-formulated component mole balance equations~\eqref{eq:comp-mole,tot-and-modif-comp-mole-balance} features the original 
component mole balance equations
\begin{subequations} \label{eq:comp-mole,original}
	\begin{align}
		\dot{(n^1 x_i^1)} & = L^1 x_i^2 -V^1 y_i^1 \qquad (i\in \{1,\dots,C\})  \label{eq:comp-mole-1,original} \\
		\dot{(n^j x_i^j)} & = L^j x_i^{j+1} - V^j y_i^j - L^{j-1}x_i^j + V^{j-1}y_i^{j-1} \qquad (i\in \{1,\dots,C\}) \label{eq:comp-mole-j,original} \\
		\dot{(n^S x_i^S)} &= -\epsilon V^S y_i^S - L^{S-1}x_i^S + V^{S-1} y_i^{S-1} \qquad (i\in \{1,\dots,C\}) \label{eq:comp-mole-S,original}
	\end{align}
\end{subequations}
and instead of the total mole balance equations~\eqref{eq:tot-mole,tot-and-modif-comp-mole-balance} features the vapor summation equations~\eqref{eq:y-sum,tot-and-modif-comp-mole-balance}.
}

\begin{prop} \label{prop:equivalence-of-basic-and-modified-basic-system}
As long as $\epsilon(t)$, $n^j(t)$ and $V^j(t)$ are non-zero for all $j \in \{1,\dots,S\}$, the system~\eqref{eq:tot-mole,tot-and-modif-comp-mole-balance}-\eqref{eq:holdup,tot-and-modif-comp-mole-balance} is equivalent to the variant which instead of~\eqref{eq:comp-mole,tot-and-modif-comp-mole-balance} features the original component mole balance equations~\eqref{eq:comp-mole,original} and instead of~\eqref{eq:tot-mole,tot-and-modif-comp-mole-balance} features the vapor summation equations~\eqref{eq:y-sum,tot-and-modif-comp-mole-balance}.
\end{prop}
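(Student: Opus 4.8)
The plan is to exploit the fact that the two systems share all of their equations except the total mole balances~\eqref{eq:tot-mole,tot-and-modif-comp-mole-balance} versus the vapor summation equations~\eqref{eq:y-sum,tot-and-modif-comp-mole-balance}, and the re-formulated component mole balances~\eqref{eq:comp-mole,tot-and-modif-comp-mole-balance} versus the original ones~\eqref{eq:comp-mole,original}. In particular, the enthalpy balances, the equilibrium- and enthalpy-defining equations, the holdup equations, and crucially the liquid summation equations~\eqref{eq:x-sum,tot-and-modif-comp-mole-balance} are common to both systems. It therefore suffices to show that, along any trajectory satisfying the common equations and the non-vanishing conditions on $\epsilon$, $n^j$, and $V^j$, the pair~\eqref{eq:tot-mole,tot-and-modif-comp-mole-balance}--\eqref{eq:comp-mole,tot-and-modif-comp-mole-balance} holds if and only if the pair~\eqref{eq:y-sum,tot-and-modif-comp-mole-balance}, \eqref{eq:comp-mole,original} holds.

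The workhorse is the elementary product-rule identity $\dot{(n^j x_i^j)} = \dot{n}^j\, x_i^j + n^j \dot{x}_i^j$. First I would record its consequence: assuming the total mole balance~\eqref{eq:tot-mole,tot-and-modif-comp-mole-balance} on a given stage, the original component mole balance~\eqref{eq:comp-mole,original} on that stage is equivalent to the re-formulated one~\eqref{eq:comp-mole,tot-and-modif-comp-mole-balance} -- one simply substitutes the total-balance expression for $\dot{n}^j$ and, when passing to the form solved for $\dot{x}_i^j$, divides by $n^j \ne 0$. This identity delivers the component-balance half of both implications as soon as the total mole balances and the vapor summation equations have been matched up against each other.

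For the direction from~\eqref{eq:tot-mole,tot-and-modif-comp-mole-balance} and~\eqref{eq:comp-mole,tot-and-modif-comp-mole-balance} to~\eqref{eq:y-sum,tot-and-modif-comp-mole-balance} and~\eqref{eq:comp-mole,original}, I would first convert the re-formulated balances into the original balances via the identity above, then sum the original component mole balances over $i \in \{1,\dots,C\}$ and invoke the liquid summation equations $\sum_i x_i^j = 1$. Writing $\sigma^j := \sum_{i=1}^C y_i^j - 1$, subtracting the total mole balance collapses everything to the relations $V^1 \sigma^1 = 0$ at the pot, $V^j \sigma^j = V^{j-1}\sigma^{j-1}$ on the interior stages, and $\epsilon V^S \sigma^S = V^{S-1}\sigma^{S-1}$ at the head. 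Since $V^j \ne 0$ and $\epsilon \ne 0$, the pot relation forces $\sigma^1 = 0$, the interior recursion propagates $\sigma^j = 0$ up to stage $S-1$, and the head relation finally gives $\sigma^S = 0$; this is exactly the vapor summation~\eqref{eq:y-sum,tot-and-modif-comp-mole-balance}. The converse is more direct: summing the original component mole balances over $i$ and using both the liquid and the vapor summation equations turns the summed balances into the total mole balances~\eqref{eq:tot-mole,tot-and-modif-comp-mole-balance}, after which the product-rule identity (dividing by $n^j \ne 0$) recovers the re-formulated balances~\eqref{eq:comp-mole,tot-and-modif-comp-mole-balance}.

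The main obstacle -- and the only place where the hypotheses $\epsilon, V^j \ne 0$ are genuinely needed -- is the vapor-summation step in the forward direction. The deviations $\sigma^j$ do not vanish stage-by-stage on their own; instead they satisfy a coupled cascade whose ``initial condition'' $\sigma^1 = 0$ is pinned down at the pot and then transported up the column by the interior recursion. Keeping careful track of the stream-variable signs in the mole balances for the three stage types (pot, interior, head), so that this cascade closes correctly, is the one spot that requires attention; everything else is routine bookkeeping with the product rule and the summation conditions.
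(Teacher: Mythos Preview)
Your proposal is correct and follows essentially the same route as the paper's proof: both directions hinge on the product-rule identity $\dot{(n^j x_i^j)} = \dot{n}^j x_i^j + n^j \dot{x}_i^j$ together with summing the component balances over $i$ and invoking the liquid (and, in the backward direction, vapor) summation equations. The only cosmetic difference is that in the forward direction the paper sums the re-formulated balances~\eqref{eq:comp-mole,tot-and-modif-comp-mole-balance} directly---using $\sum_i \dot{x}_i^j = 0$ from the liquid summation---to obtain the cascade $V^j\sigma^j = V^{j-1}\sigma^{j-1}$ in one stroke, whereas you first pass to the original form~\eqref{eq:comp-mole,original} and then subtract the total mole balance; the resulting recursion and the stage-by-stage propagation of $\sigma^j = 0$ are identical.
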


\begin{proof}
\revtext{
We call the variant of system~\eqref{eq:tot-mole,tot-and-modif-comp-mole-balance}-\eqref{eq:holdup,tot-and-modif-comp-mole-balance} which replaces~\eqref{eq:comp-mole,tot-and-modif-comp-mole-balance} by \eqref{eq:comp-mole,original} and \eqref{eq:tot-mole,tot-and-modif-comp-mole-balance} by \eqref{eq:y-sum,tot-and-modif-comp-mole-balance} the basic system, for the sake of brevity. 
Suppose first that the equations~\eqref{eq:tot-mole,tot-and-modif-comp-mole-balance}-\eqref{eq:holdup,tot-and-modif-comp-mole-balance} are satisfied and that
\begin{align} \label{eq:eps,n,V-not-zero}
\epsilon(t) \ne 0, \qquad n^j(t) \ne 0, \qquad V^j(t) \ne 0 \qquad (j \in \{1,\dots,S\})
\end{align}
for all $t$ in the solution interval. It then follows by the liquid summation equations~\eqref{eq:x-sum,tot-and-modif-comp-mole-balance} and the component mole balance equations~\eqref{eq:comp-mole,tot-and-modif-comp-mole-balance} that
\begin{align} \label{eq:V*y-sum}
V^j \sum_{i=1}^C y_i^j = V^j \qquad (j \in \{1,\dots,S-1\})
\qquad \text{and} \qquad
\epsilon V^S \sum_{i=1}^C y_i^S = \epsilon V^S.
\end{align}
So, by~\eqref{eq:eps,n,V-not-zero} and~\eqref{eq:V*y-sum}, the vapor summation equations~\eqref{eq:y-sum,tot-and-modif-comp-mole-balance} follow. Inserting the total and component mole balance equations~\eqref{eq:tot-mole,tot-and-modif-comp-mole-balance}-\eqref{eq:comp-mole,tot-and-modif-comp-mole-balance} in
\begin{align} \label{eq:n*x-dot}
\dot{(n^jx_i^j)} = \dot{n}^j x_i^j + n^j \dot{x}_i^j \qquad (i \in \{1,\dots,C\}, j \in \{1,\dots,S\}),
\end{align}
it further follows that the original component mole balance equations~\eqref{eq:comp-mole,original} are satisfied as well. And therefore, the equations of the basic system are satisfied, as desired.
Suppose now conversely that the equations of the basic system are satisfied and that~\eqref{eq:eps,n,V-not-zero} holds true. It then follows by summing the equations~\eqref{eq:n*x-dot} over all components and by using the original component mole balance equations~\eqref{eq:comp-mole,original} together with the liquid and the vapor summation equations that the total mole balance equations~\eqref{eq:tot-mole,tot-and-modif-comp-mole-balance} are satisfied. Inserting the original  component mole balance equations~\eqref{eq:comp-mole,original} and the total mole balance equations~\eqref{eq:tot-mole,tot-and-modif-comp-mole-balance}, in turn, into~\eqref{eq:n*x-dot}, we also obtain the re-formulated component mole balance equations~\eqref{eq:comp-mole,tot-and-modif-comp-mole-balance}. And therefore, the equations~\eqref{eq:tot-mole,tot-and-modif-comp-mole-balance}-\eqref{eq:holdup,tot-and-modif-comp-mole-balance} are satisfied, as desired. 
}
\end{proof}

\begin{prop} \label{prop:equivalence-of-modified-basic-and-unperturbed-system}
As long as $n^j(t)$ are non-zero for all $j \in \{1,\dots,S\}$, the system~\eqref{eq:tot-mole,tot-and-modif-comp-mole-balance}-\eqref{eq:holdup,tot-and-modif-comp-mole-balance} is  equivalent to the intermediary unperturbed system, that is, to system~\eqref{eq:tot-mole,x-sum-differentiation-perturbed}-\eqref{eq:aux-alg-eq,x-sum-differentiation-perturbed} with perturbation parameters $\delta_i^j = 0$. 
\end{prop}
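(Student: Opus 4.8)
The plan is to argue exactly as in the proof of Proposition~\ref{prop:equivalence-of-basic-and-modified-basic-system}, exploiting that the two systems differ only in their component mole balance equations. The system~\eqref{eq:tot-mole,tot-and-modif-comp-mole-balance}-\eqref{eq:holdup,tot-and-modif-comp-mole-balance} imposes the component mole balances~\eqref{eq:comp-mole,tot-and-modif-comp-mole-balance} for all $C$ components, whereas the intermediary unperturbed system imposes them only for the first $C-1$ components~\eqref{eq:comp-mole,x-sum-differentiation-perturbed} (with $\delta_i^j=0$) and instead adds the $S$ new algebraic equations~\eqref{eq:aux-alg-eq,x-sum-differentiation}. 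All remaining equations (total mole balances, enthalpy balances, liquid summation, equilibrium- and enthalpy-defining, and holdup equations) are literally identical in both systems; in particular the liquid summation~\eqref{eq:x-sum,tot-and-modif-comp-mole-balance} and its rewritten form~\eqref{eq:x-sum,x-sum-differentiation-perturbed} are the same constraint, so there is nothing to show for those. The single tool needed is that, since the liquid summation holds at all times along any solution, differentiating it in time gives
\begin{align} \label{eq:sum-xdot-zero}
\sum_{i=1}^C \dot{x}_i^j = 0 \qquad (j \in \{1,\dots,S\}).
\end{align}

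For the forward implication I would assume that~\eqref{eq:tot-mole,tot-and-modif-comp-mole-balance}-\eqref{eq:holdup,tot-and-modif-comp-mole-balance} holds with $n^j(t)\ne 0$. Summing the component mole balances~\eqref{eq:comp-mole,tot-and-modif-comp-mole-balance} over all $i \in \{1,\dots,C\}$ and observing that the sum of the right-hand side numerators is precisely the left-hand side of the new algebraic equations~\eqref{eq:aux-alg-eq,x-sum-differentiation}, the identity~\eqref{eq:sum-xdot-zero} immediately yields~\eqref{eq:aux-alg-eq,x-sum-differentiation}. The first $C-1$ component balances are retained verbatim, so the intermediary system holds.

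For the reverse implication I would assume the intermediary system holds with $n^j(t)\ne 0$. Differentiating the liquid summation gives $\dot{x}_C^j = -\sum_{i=1}^{C-1}\dot{x}_i^j$, and inserting the first $C-1$ component balances gives $\dot{x}_C^j = -\sum_{i=1}^{C-1} \mathrm{N}_i^j/n^j$, where $\mathrm{N}_i^j$ denotes the $i$-th right-hand side numerator. The new algebraic equation~\eqref{eq:aux-alg-eq,x-sum-differentiation}, which reads $\sum_{i=1}^C \mathrm{N}_i^j/n^j = 0$, then forces $-\sum_{i=1}^{C-1}\mathrm{N}_i^j/n^j = \mathrm{N}_C^j/n^j$, so that $\dot{x}_C^j = \mathrm{N}_C^j/n^j$, which is precisely the missing $C$-th component balance in~\eqref{eq:comp-mole,tot-and-modif-comp-mole-balance}. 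Hence~\eqref{eq:tot-mole,tot-and-modif-comp-mole-balance}-\eqref{eq:holdup,tot-and-modif-comp-mole-balance} holds, completing the equivalence.

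This argument is essentially routine bookkeeping, so there is no genuine obstacle; the only points requiring mild care are the sign convention on stage $S$, where the summand $\epsilon V^S (x_i^S - y_i^S)$ in~\eqref{eq:aux-alg-eq-S,x-sum-differentiation} must be matched against the $-\epsilon V^S(y_i^S - x_i^S)$ appearing in~\eqref{eq:comp-mole-S,tot-and-modif-comp-mole-balance}, and the hypothesis $n^j \ne 0$, which is what makes the per-stage division well-defined and lets one pass freely between $\sum_i \mathrm{N}_i^j/n^j = 0$ and $\sum_i \mathrm{N}_i^j = 0$.
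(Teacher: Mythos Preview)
Your proposal is correct and follows essentially the same approach as the paper's proof. The only minor presentational difference is in the reverse direction: whereas you use the new algebraic equation directly as the identity $\sum_{i=1}^C \mathrm{N}_i^j/n^j = 0$, the paper first rewrites this (via the liquid summation) as the modified vapor summation relations $V^j\sum_i y_i^j = V^j$ and $\epsilon V^S\sum_i y_i^S = \epsilon V^S$, and then uses these to conclude $\sum_i \mathrm{N}_i^j = 0$ --- a slight detour that lands at the same place.
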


\begin{proof}
It is clear by the arguments in Section~\ref{sec:index-reduction} that if the equations~\eqref{eq:tot-mole,tot-and-modif-comp-mole-balance}-\eqref{eq:holdup,tot-and-modif-comp-mole-balance} are satisfied with
\begin{align} \label{eq:n-not-zero}
n^j(t) \ne 0 \qquad (j \in \{1,\dots,S\})
\end{align}
for all $t$ in the solution interval, then so are the equations of the unperturbed system. 
Suppose now conversely that the equations of the unperturbed system are satisfied and that~\eqref{eq:n-not-zero} holds true. In order to show that \eqref{eq:tot-mole,tot-and-modif-comp-mole-balance}-\eqref{eq:holdup,tot-and-modif-comp-mole-balance} are satisfied, we have only to establish the component mole balance equations~\eqref{eq:comp-mole,tot-and-modif-comp-mole-balance} for component $C$. (After all, the other equations of \eqref{eq:tot-mole,tot-and-modif-comp-mole-balance}-\eqref{eq:holdup,tot-and-modif-comp-mole-balance} are already part of the perturbed system.) In order to do so, we first observe by the new algebraic equations~\eqref{eq:aux-alg-eq,x-sum-differentiation} and the liquid summation equations that the following modified vapor summation equations
\begin{align} \label{eq:V*y-sum,2}
V^j \sum_{i=1}^C y_i^j = V^j \qquad (j \in \{1,\dots,S-1\})
\qquad \text{and} \qquad
\epsilon V^S \sum_{i=1}^C y_i^S = \epsilon V^S
\end{align}
hold true for all stages. It then follows by the liquid summation equations, the above modified vapor summation equations, and the component mole balance equations for the first $C-1$ components, that the component mole balance equations~\eqref{eq:comp-mole,tot-and-modif-comp-mole-balance} also hold for component $C$, as desired.
\end{proof}

\color{black}
\begin{prop} \label{prop:equivalence-of-modified-basic-and-alternative-system}
As long as $n^j(t)$ and $\sum_{i=1}^C \partial_T \fvlei(P(t),T^j(t),\bm{x}^j(t))$ are non-zero for all $j \in \{1,\dots,S\}$, the system~\eqref{eq:tot-mole,tot-and-modif-comp-mole-balance}-\eqref{eq:holdup,tot-and-modif-comp-mole-balance} is  equivalent to the alternative system~\eqref{eq:tot-mole,y-sum-differentiation}-\eqref{eq:vlesum,y-differentiation}.
\end{prop}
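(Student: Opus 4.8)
My plan is to establish the two set-inclusions separately, taking advantage of the fact that the two systems literally share most of their equations: the total mole balances \eqref{eq:tot-mole,y-sum-differentiation}, the component mole balances \eqref{eq:comp-mole,y-sum-differentiation} for the first $C-1$ components, the liquid summation conditions, the equilibrium- and enthalpy-defining equations \eqref{eq:y-def,y-sum-differentiation}-\eqref{eq:enth-def,y-sum-differentiation}, and the holdup equations are common to both. Hence only two pairs of equations have to be reconciled, namely the component-$C$ mole balance and the differential enthalpy balances \eqref{eq:enth,tot-and-modif-comp-mole-balance} of the original system against the equilibrium summation equations \eqref{eq:vlesum,y-differentiation} and the algebraicized enthalpy balances \eqref{eq:enth,y-sum-differentiation} of the alternative system.

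For the implication from the original to the alternative system, I would first invoke Proposition \ref{prop:equivalence-of-basic-and-modified-basic-system} to obtain the vapor summation equations \eqref{eq:y-sum,tot-and-modif-comp-mole-balance}; together with the equilibrium-defining equations \eqref{eq:y-def,y-sum-differentiation} these are precisely \eqref{eq:vlesum,y-differentiation}. Since \eqref{eq:vlesum,y-differentiation} then holds along the entire solution, I may differentiate it in time and solve for $\dot T^j$, which is licensed exactly because $\sum_{i=1}^C \partial_T \fvlei(P,T^j,\bm{x}^j) \neq 0$, obtaining \eqref{eq:T-dot}. Differentiating the enthalpy-defining equation \eqref{eq:enth-def,tot-and-modif-comp-mole-balance} and inserting \eqref{eq:T-dot} gives \eqref{eq:H-dot}; substituting \eqref{eq:H-dot} into the differential enthalpy balance \eqref{eq:enth,tot-and-modif-comp-mole-balance} and eliminating $\dot n^j$ and $\dot{\bm{x}}^j$ through the mole balances reproduces the algebraicized enthalpy balance \eqref{eq:enth,y-sum-differentiation}, exactly the computation already laid out in \ref{app:index-reduction-y-sum-differentiation}.

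For the converse, I would recover the component-$C$ mole balance along the lines of Proposition \ref{prop:equivalence-of-modified-basic-and-unperturbed-system}: the equilibrium summation equations \eqref{eq:vlesum,y-differentiation} with \eqref{eq:y-def,y-sum-differentiation} give $\sum_{i=1}^C y_i^j = 1$, whence the liquid summation equation and the mole balances of the first $C-1$ components force the $C$-th one after dividing by $n^j \neq 0$. The differential enthalpy balance is then recovered by running the differentiation argument backwards: since \eqref{eq:vlesum,y-differentiation} and \eqref{eq:enth-def,y-sum-differentiation} hold identically in time, \eqref{eq:T-dot} and hence \eqref{eq:H-dot} remain valid, so that \eqref{eq:H-dot} together with all the mole balances turns the algebraicized balance \eqref{eq:enth,y-sum-differentiation} back into $\dot H^j$ equal to the right-hand side of \eqref{eq:enth,tot-and-modif-comp-mole-balance}.

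The step I expect to be the crux is the enthalpy balance, and in particular the reversibility of the chain-rule manipulation: the passage to \eqref{eq:T-dot} is only valid where $\sum_{i=1}^C \partial_T \fvlei \neq 0$, which is exactly the additional hypothesis, and without it the algebraicized balance would carry strictly less information than the differential one. A second point requiring care is the equilibrium summation itself, since the original system only delivers $V^j(\sum_{i=1}^C y_i^j - 1) = 0$ (and its $\epsilon$-weighted analogue on the top stage); the identification with \eqref{eq:vlesum,y-differentiation} therefore rests on the nonvanishing of the vapor streams supplied by Proposition \ref{prop:equivalence-of-basic-and-modified-basic-system}. Once these two points are secured, the remainder is routine bookkeeping matching the shared equations on each stage.
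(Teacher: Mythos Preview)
Your plan is correct and matches the paper's own proof essentially step for step: both directions hinge on the component-$C$ mole balance (recovered via the liquid and vapor summation identities, exactly as in Proposition~\ref{prop:equivalence-of-modified-basic-and-unperturbed-system}) and on the reversibility of the chain-rule passage between \eqref{eq:enth,tot-and-modif-comp-mole-balance} and \eqref{eq:enth,y-sum-differentiation} via \eqref{eq:T-dot}--\eqref{eq:H-dot}, with the hypothesis $\sum_i \partial_T\fvlei\neq 0$ used precisely to solve for $\dot T^j$. Your observation that the forward direction tacitly needs $\epsilon,V^j\neq 0$ to extract $\sum_i y_i^j=1$ from $V^j(\sum_i y_i^j-1)=0$ is well taken; the paper handles this by deferring to the derivation in \ref{app:index-reduction-y-sum-differentiation}, which itself invokes Proposition~\ref{prop:equivalence-of-basic-and-modified-basic-system} and therefore the same assumption.
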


\begin{proof}
It is clear by the arguments in \ref{app:index-reduction-y-sum-differentiation} that if the equations~\eqref{eq:tot-mole,tot-and-modif-comp-mole-balance}-\eqref{eq:holdup,tot-and-modif-comp-mole-balance} are satisfied with
\begin{align} \label{eq:n,partial_T-fvle-not-zero}
n^j(t) \ne 0 
\qquad \text{and} \qquad
\sum_{i=1}^C \partial_T \fvlei(P(t),T^j(t),\bm{x}^j(t)) \ne 0 \qquad (j \in \{1,\dots,S\})
\end{align}
for all $t$ in the solution interval, then so are the equations of the alternative system. 
Suppose now conversely that the equations \eqref{eq:tot-mole,y-sum-differentiation}-\eqref{eq:vlesum,y-differentiation} of the alternative system are satisfied and \eqref{eq:n,partial_T-fvle-not-zero} holds true. In order to show that \eqref{eq:tot-mole,tot-and-modif-comp-mole-balance}-\eqref{eq:holdup,tot-and-modif-comp-mole-balance} are satisfied, we have only to establish the component mole balance equations~\eqref{eq:comp-mole,tot-and-modif-comp-mole-balance} for component $C$ and the enthalpy balance equations~\eqref{eq:enth,tot-and-modif-comp-mole-balance}. (After all, the other equations of \eqref{eq:tot-mole,tot-and-modif-comp-mole-balance}-\eqref{eq:holdup,tot-and-modif-comp-mole-balance} are already part of the alternative system.) 
In order to see that the component mole balance equations~\eqref{eq:comp-mole,tot-and-modif-comp-mole-balance} are satisfied also for component $C$, notice that
\begin{align} \label{eq:dot-x_C}
\dot{x}_C^j = -\sum_{i=1}^{C-1} \dot{x}_i^j
\end{align}
by the liquid summation equations~\eqref{eq:x-sum,y-sum-differentiation}. Also, 
notice that
\begin{align} \label{eq:sums-of-mole-fraction-differences}
\sum_{i=1}^{C-1} (x_i^k - x_i^l) = -(x_C^k - x_C^l), \qquad
\sum_{i=1}^{C-1} (y_i^k - x_i^l) = -(y_C^k - x_C^l), \qquad
\sum_{i=1}^{C-1} (y_i^k - y_i^l) = -(y_C^k - y_C^l)
\end{align}
for arbitary $k,l \in \{1,\dots,S\}$ by the liquid and the equilibrium summation equations~\eqref{eq:x-sum,y-sum-differentiation} and~\eqref{eq:vlesum,y-differentiation} in conjunction with the vapor-fraction-defining equations~\eqref{eq:y-def,y-sum-differentiation}. Since now the right-hand sides of the component mole balance equations~\eqref{eq:comp-mole,y-sum-differentiation} are affine combinations of differences of the form $x_i^k - x_i^l$ or $y_i^k - x_i^l$ or $y_i^k - y_i^l$ for $i \in \{1,\dots,C-1\}$ and $k,l \in \{1,\dots,S\}$, it follows by combining~\eqref{eq:dot-x_C} and \eqref{eq:sums-of-mole-fraction-differences} that the component mole balance equations~\eqref{eq:comp-mole,tot-and-modif-comp-mole-balance} are satisfied also for component $C$, as desired. 
In order to see that the enthalpy balance equations~\eqref{eq:enth,tot-and-modif-comp-mole-balance} are satisfied, notice that
\begin{align} \label{eq:H-dot,alternative}
\dot{H}^j 
= \dot{n}^j \fhl(T^j,\bm{x}^j) + n^j \bm{a}(P,T^j,\bm{x}^j) \cdot \dot{\bm{x}}^j + n^j b(P,T^j,\bm{x}^j) \dot{P}
\end{align}
by the enthalpy-defining equations~\eqref{eq:enth-def,y-sum-differentiation} and by~\eqref{eq:a(P,T,x)-def} and~\eqref{eq:b(P,T,x)-def}. Also, notice that the right-hand side of~\eqref{eq:H-dot,alternative} is equal to the left-hand side of the $j$th algebraicized enthalpy balance equation~\eqref{eq:enth,y-sum-differentiation} by virtue of the total mole balance equations~\eqref{eq:tot-mole,y-sum-differentiation} and the component mole balance equations~\eqref{eq:comp-mole,y-sum-differentiation} for the first $C-1$ components and for component $C$ (which we have  established in the first step of the proof). And therefore, the enthalpy balance equations~\eqref{eq:enth,tot-and-modif-comp-mole-balance} follow, as desired. 
\end{proof}
\color{black}

\subsection{A physics-based motivation of the perturbed system}
\label{app:physical-motivation}

In this section, we show how one can derive the perturbed system equations~\eqref{eq:tot-mole,x-sum-differentiation-perturbed}-\eqref{eq:aux-alg-eq,x-sum-differentiation-perturbed} based on physical considerations by relaxing the zero vapor holdup assumption~\eqref{eq:zero-vapor-holdup}. Specifically, assume that there is a small vapor holdup $\nv^j \ne 0$ on each of the equilibrium stages so that the total holdup $n^j = \nl^j + \nv^j$ is no longer equal to the liquid holdup $\nl^j$. In this case, the component mole balances around the stages become
\begin{subequations} \label{eq:comp-mole,vapor-holdup}
	\begin{align}
		\dot{(n^1 z_i^1)} & = L^1 x_i^2 -V^1 y_i^1 \qquad (i\in \{1,\dots,C\})  \label{eq:comp-mole-1,vapor-holdup} \\
		\dot{(n^j z_i^j)} & = L^j x_i^{j+1} - V^j y_i^j - L^{j-1}x_i^j + V^{j-1}y_i^{j-1} \qquad (i\in \{1,\dots,C\}) \label{eq:comp-mole-j,vapor-holdup} \\
		\dot{(n^S z_i^S)} &= -\epsilon V^S y_i^S - L^{S-1}x_i^S + V^{S-1} y_i^{S-1} \qquad (i\in \{1,\dots,C\}) \label{eq:comp-mole-S,vapor-holdup}
	\end{align}
\end{subequations}
where $\bm{z}^j$ is the total composition vector on stage $j$. In terms of the total stage volume $\mathcal{V}$, the vapor volume fraction $\nu := \mathcal{V}_{\mathrm{v}}/\mathcal{V}$, and the molar densities $\rhol$ and $\rhov$, we can then write the total composition vector as
\begin{align}
z_i^j &= \frac{\nl^j x_i^j + \nv^j y_i^j}{\nl^j + \nv^j} = x_i^j + \beta (y_i^j-x_i^j) \label{eq:z-small-perturbation-of-x}\\
\beta &:= \frac{\nv^j}{\nl^j + \nv^j} = \frac{\mathcal{V}\rhov \nu}{\mathcal{V}(\rhol (1-\nu) + \rhov \nu)}
= \frac{\nu}{(\rhol / \rhov)(1-\nu) + \nu}. \label{eq:beta}
\end{align}
Since typically $\mu := \rhol / \rhov \approx 1000$ and \revtext{since at least for the data reported in~\cite{colwell1981} $\nu$ is} less than $0.5$, we see from~\eqref{eq:beta} that $\beta$ is a small number typically no larger than $1/1000$. So, by~\eqref{eq:z-small-perturbation-of-x}, the total composition vector $\bm{z}^j$ is a very small perturbation of the liquid composition vector $\bm{x}^j$. In short,
\begin{align} \label{eq:z=x+delta}
z_i^j = x_i^j + \beta (y_i^j-x_i^j) = x_i^j + \delta_i^j
\end{align}
with $|\delta_i^j| \ll 1$. 
If we additionally assume this perturbation $\delta_i^j$ to be time-independent and then plug~\eqref{eq:z=x+delta} into the component mole balances~\eqref{eq:comp-mole,vapor-holdup}, then we arrive precisely at the perturbed component mole balances~\eqref{eq:comp-mole,x-sum-differentiation-perturbed} of our perturbed system. And combining~\eqref{eq:comp-mole,x-sum-differentiation-perturbed}, in turn, with the total mole balances (that is, the summed versions of~\eqref{eq:comp-mole,vapor-holdup}), we arrive precisely at the perturbed additional algebraic equations~\eqref{eq:aux-alg-eq,x-sum-differentiation-perturbed}. 

\subsection{Index considerations}
\label{app:index-considerations}

\revtext{
In this section, we work out rigorous conditions under which our perturbed and our relaxed systems~\eqref{eq:ode,perturbed}-\eqref{eq:alg,perturbed} and~\eqref{eq:ode,relaxed}-\eqref{eq:alg,relaxed} are non-singular along their solutions (\ref{app:perturbation-approach} and \ref{app:relaxation-approach}). And, for the sake of completeness, we do the same for the alternative system~\eqref{eq:tot-mole,y-sum-differentiation}-\eqref{eq:vlesum,y-differentiation} (\ref{app:y-summation-approach}).
}

\subsubsection{Index reduction by differentiation of the liquid summation equations: perturbation-based approach}
\label{app:perturbation-approach}

We collect the differential and the algebraic variables of the perturbed system~\eqref{eq:ode,perturbed}-\eqref{eq:alg,perturbed} in the (column) vectors $\xi$ and $\eta$:
\begin{align*}
\xi &:= (n^1, \dots, n^S, \hat{\bm{x}}^1, \dots, \hat{\bm{x}}^S, H^1, \dots, H^S)^\top \in \R^{(2+C)S} \nonumber\\
\eta &:= (V^S, \dots, V^1, \bm{y}^1, \dots, \bm{y}^S, T^1, \dots, T^S, x_C^1, \dots, x_C^S, L^1, \dots, L^{S-1})^\top \in \R^{(4+C)S-1},
\end{align*}
where $\hat{\bm{x}}^j := (x_1^j, \dots, x_{C-1}^j)$. Additionally, we denote the algebraic equations of the system~\eqref{eq:ode,perturbed}-\eqref{eq:alg,perturbed} by $\hold^j$, $\xsum^j$, $\ydef_i^j$, $\enthdef^j$, $\aux_\delta^j$, respectively. Specifically, 
\begin{align*}
\hold^2, \dots, \hold^S, \qquad
\xsum^j, \qquad
\ydef_i^j, \qquad
\enthdef^j, \qquad
\aux_\delta^j \qquad (j \in \{1,\dots,S\})
\end{align*}
denote the functions of the state variables $\zeta := (\xi,\eta)$ defined as left-hand side minus right-hand side of the holdup equations~\eqref{eq:holdup-j,x-sum-differentiation-perturbed}, of the liquid summation equations~\eqref{eq:x-sum,x-sum-differentiation-perturbed}, of the vapor-fraction-defining equations~\eqref{eq:y-def,x-sum-differentiation-perturbed}, of the enthalpy-defining equations~\eqref{eq:enth-def,x-sum-differentiation-perturbed}, and of the auxiliary algebraic equations~\eqref{eq:aux-alg-eq,x-sum-differentiation-perturbed}, respectively. We collect these functions in the following order:
\begin{align*}
g_\delta := (\aux_\delta^S, \dots, \aux_\delta^1, \bm{\ydef}^1, \dots, \bm{\ydef}^S, \enthdef^1, \dots, \enthdef^S, \xsum^1, \dots, \xsum^S, \hold^2, \dots, \hold^S)^\top,
\end{align*}
where $\bm{\ydef}^j := (\ydef_1^j, \dots, \ydef_C^j)$, of course. With these orderings of the algebraic equations and the algebraic variables, the Jacobian $D_\eta g_\delta(\zeta)$ of the algebraic equations w.r.t.~the algebraic variables becomes upper triangular (Figure~\ref{fig:incidence-matrix-index-reduced-for-S=3}). 

\begin{lm} \label{lm:Jacobian-upper-triangular}
The Jacobian $D_\eta g_{\delta}(\zeta)$ is upper triangular for every state $\zeta \in Z$ and every perturbation parameter $\delta \in [0,\infty)^{S\cdot C}$. Also, the diagonal elements of $D_\eta g_{\delta}(\zeta)$ are given by~\eqref{eq:aux^S-dell-V^S}-\eqref{eq:holdup^j+1-dell-L^j}. 
\end{lm}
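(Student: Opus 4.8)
The plan is to verify the claimed upper-triangular structure directly, by reading off, equation by equation, which algebraic variables actually occur, and then checking that in the chosen orderings of equations and variables every such occurrence lands on or above the diagonal. Since $D_\eta g_\delta$ collects only the partial derivatives with respect to the algebraic variables $\eta$, the dependence of the equations on the differential variables $\xi$ (such as $n^j$, $H^j$, or the $\hat{\bm{x}}^j = (x_1^j,\dots,x_{C-1}^j)$) plays no role. Likewise, the perturbation parameters enter the equations only through additive constants $\delta_i^j$, which do not affect \emph{which} algebraic variables occur nor the structure of the derivatives with respect to them; this is why the structural statement will hold uniformly in $\delta \in [0,\infty)^{S\cdot C}$.

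First I would fix the block structure induced by the two orderings: the equations fall into the five groups $\aux_\delta^S,\dots,\aux_\delta^1$, then $\bm{\ydef}^1,\dots,\bm{\ydef}^S$, then $\enthdef^1,\dots,\enthdef^S$, then $\xsum^1,\dots,\xsum^S$, and finally $\hold^2,\dots,\hold^S$, and the variables fall analogously into $V^S,\dots,V^1$, then $\bm{y}^1,\dots,\bm{y}^S$, then $T^1,\dots,T^S$, then $x_C^1,\dots,x_C^S$, and finally $L^1,\dots,L^{S-1}$. Upper triangularity is then established at two levels. At the block level, I would record from the equations that the auxiliary equations~\eqref{eq:aux-alg-eq,x-sum-differentiation-perturbed} involve only $V$-, $\bm{y}$-, $x_C$- and $L$-variables (no temperatures), the equilibrium-defining equations~\eqref{eq:y-def,x-sum-differentiation-perturbed} only $\bm{y}$-, $T$- and $x_C$-variables, the enthalpy-defining equations~\eqref{eq:enth-def,x-sum-differentiation-perturbed} only $T$- and $x_C$-variables, the liquid summation equations~\eqref{eq:x-sum,x-sum-differentiation-perturbed} only the single variable $x_C^j$, and the holdup equations~\eqref{eq:holdup-j,x-sum-differentiation-perturbed} only the single variable $L^j$. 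Matching these dependencies against the column ordering shows that the equations of each group depend only on variable groups of equal or later index, so that the Jacobian is block-upper-triangular.

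Second, I would check that each diagonal block is itself upper triangular. For the last four groups this is immediate, and in fact those diagonal blocks are diagonal: within its own variable group, $\ydef_i^j$ depends only on $y_i^j$, $\enthdef^j$ only on $T^j$, $\xsum^j$ only on $x_C^j$, and $\hold^{j+1}$ only on $L^j$, i.e. exactly on the variable paired with it. The one group requiring care is the auxiliary block, and this is the step I expect to be the main (if modest) obstacle: $\aux_\delta^j$ depends on \emph{two} vapor-stream variables, namely $V^j$ and $V^{j-1}$. The very point of listing both the equations $\aux_\delta^S,\dots,\aux_\delta^1$ and the variables $V^S,\dots,V^1$ in reversed order is that, under this reversal, $V^{j-1}$ is placed immediately to the \emph{right} of $V^j$; hence the $V^{j-1}$-contribution falls strictly above the diagonal and the $V^j$-contribution on it, while no $\aux_\delta^j$ depends on any $V^k$ with $k>j$. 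This shows the auxiliary block is upper (in fact bi-)diagonal and completes the triangularity argument.

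Finally, with the structure in hand, the diagonal entries follow by differentiating each paired equation with respect to its paired variable: differentiating $\aux_\delta^S$ in $V^S$ gives $\epsilon\sum_{i=1}^C (x_i^S+\delta_i^S-y_i^S)/n^S$ and $\aux_\delta^j$ in $V^j$ (for $j\le S-1$) gives $\sum_{i=1}^C (x_i^j+\delta_i^j-y_i^j)/n^j$; differentiating $\ydef_i^j$ in $y_i^j$ and $\xsum^j$ in $x_C^j$ gives $1$; differentiating $\enthdef^j$ in $T^j$ gives $-n^j\,\partial_T\fhl(T^j,\bm{x}^j)$; and differentiating $\hold^{j+1}$ in $L^j$ gives $-\fhold'(L^j)$. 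These are precisely the expressions~\eqref{eq:aux^S-dell-V^S}--\eqref{eq:holdup^j+1-dell-L^j}, which establishes the lemma.
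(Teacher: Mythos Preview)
Your proof is correct and follows essentially the same approach as the paper's own proof, namely direct verification from the definitions of the algebraic equations and the chosen orderings of equations and variables. The paper dispatches both assertions in a single sentence as ``straightforward to verify'' and simply records the diagonal entries; your proposal fills in exactly the details that this phrase hides, in particular the two-level block/within-block argument and the observation that the reversed ordering of both the auxiliary equations and the $V$-variables places the $V^{j-1}$-dependence of $\aux_\delta^j$ strictly above the diagonal.
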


\begin{proof}
It is straightforward to verify from the definition of $g_\delta$ and the underlying algebraic equations of the perturbed system~\eqref{eq:tot-mole,x-sum-differentiation-perturbed}-\eqref{eq:aux-alg-eq,x-sum-differentiation-perturbed} that the Jacobian $D_\eta g_\delta(\zeta)$ is upper triangular. It is also straightforward to verify that its diagonal elements or diagonal blocks (in the case of the vapor-fraction-defining equations) are given by
\begin{align}
\partial_{V^S} \aux_\delta^S(\zeta) &= -\epsilon \sum_{i=1}^C \big( y_i^S - x_i^S - \delta_i^S)/n^S, \label{eq:aux^S-dell-V^S}\\
\partial_{V^j} \aux_\delta^j(\zeta) &= -\sum_{i=1}^C \big( y_i^j - x_i^j - \delta_i^j)/n^j \qquad (j \in \{S-1, \dots, 1\}), \label{eq:aux^j-dell-V^j}\\
D_{\bm{y}^j} \bm{\ydef}^j(\zeta) &= \operatorname{diag}(1,\dots,1) \in \R^{C \times C} \qquad (j \in \{1,\dots,S\}), \label{eq:equil^j-dell-y^j} \\
\partial_{T^j} \enthdef^j(\zeta) &= -n^j \partial_T \fhl(T^j,\bm{x}^j) \qquad (j \in \{1,\dots,S\}), \label{eq:enth^j-dell-T^j} \\
\partial_{x_C^j} \xsum^j(\zeta) &= 1 \qquad (j \in \{1,\dots,S\}), \label{eq:xsum^j-dell-x_C^j} \\
\partial_{L^j} \hold^{j+1}(\zeta) &= - \fhold'(L^j) \qquad (j \in \{1,\dots,S-1\}), \label{eq:holdup^j+1-dell-L^j}
\end{align}
which concludes the proof.
\end{proof}

\begin{figure}
	\centering
	\includegraphics[width=0.5\textwidth]{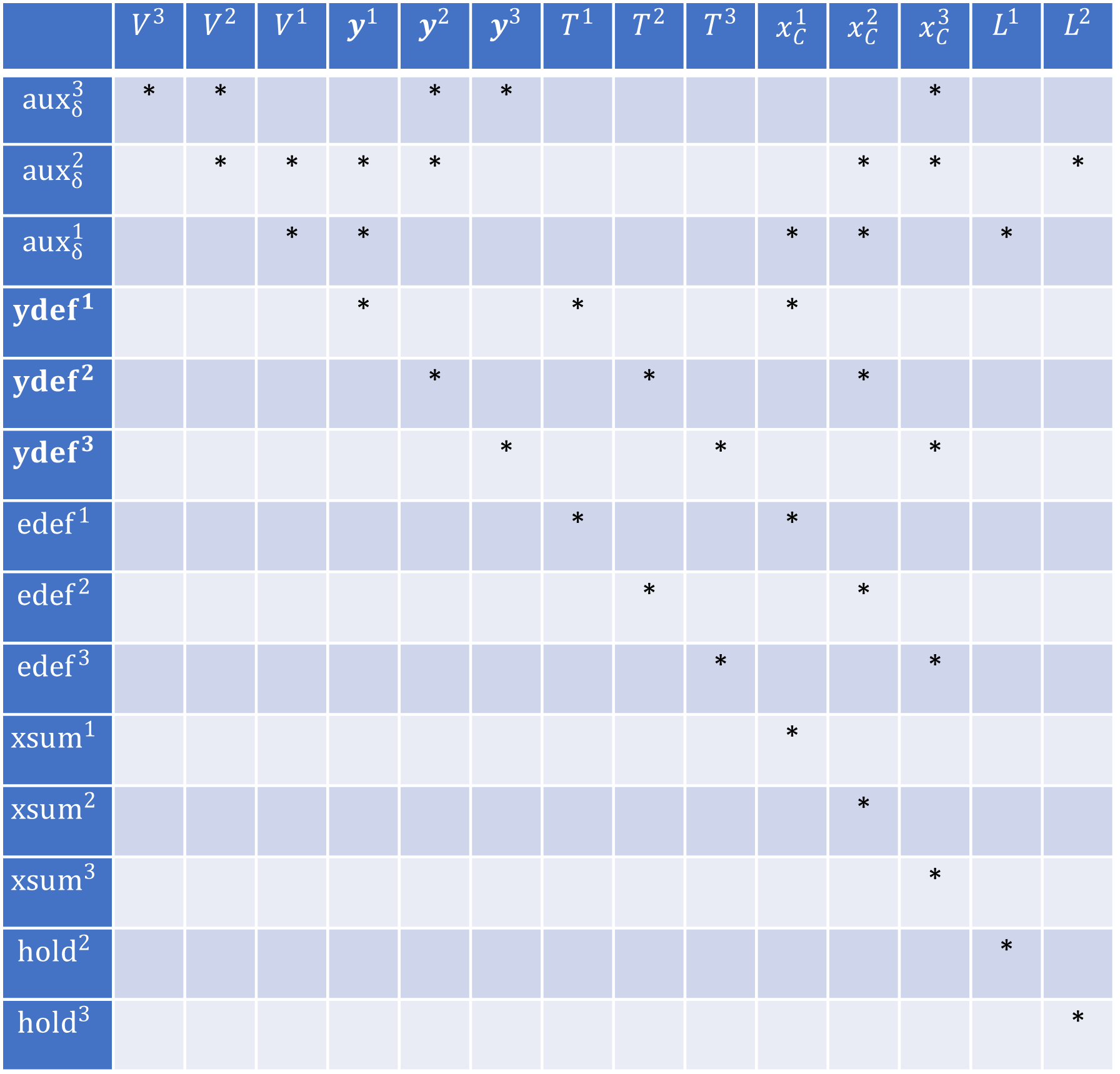}
	\caption{Structure of the Jacobian $D_\eta g_\delta(\zeta)$ of the algebraic equations of~\eqref{eq:ode,perturbed}-\eqref{eq:alg,perturbed} w.r.t.~the algebraic variables in the special case of $S=3$ stages. An empty cell stands for a zero entry or block of that matrix, while a cell with an asterisk stands for a generally non-zero entry or block}
	\label{fig:incidence-matrix-index-reduced-for-S=3}
\end{figure}

We now show that the unperturbed system, that is, system~\eqref{eq:tot-mole,x-sum-differentiation-perturbed}-\eqref{eq:aux-alg-eq,x-sum-differentiation-perturbed} with all $\delta_i^j = 0$, is singular along its solutions (Proposition~\ref{prop:singularity-of-unperturbed-system}). In contrast, the perturbed system~\eqref{eq:tot-mole,x-sum-differentiation-perturbed}-\eqref{eq:aux-alg-eq,x-sum-differentiation-perturbed} with perturbations chosen to satisfy $0 < \delta_i^1 \le \dotsb \le \delta_i^S$ is non-singular along its solutions (Proposition~\ref{prop:regularity-of-perturbed-system}).

\begin{prop} \label{prop:singularity-of-unperturbed-system}
Suppose $I \ni t \mapsto \zeta(t)$ is a solution of the unperturbed system equations~\eqref{eq:tot-mole,x-sum-differentiation-perturbed}-\eqref{eq:aux-alg-eq,x-sum-differentiation-perturbed} with $\delta_i^j = 0$ satisfying
\begin{align} \label{eq:n^j-and-V^j-nonzero}
\epsilon(t) \ne 0, \qquad
n^1(t), \dots, n^S(t) \ne 0, \qquad
V^1(t), \dots, V^S(t) \ne 0, \qquad
\end{align}
for all times $t$ in the solution interval $I$. Then the Jacobian $D_\eta g_0(\zeta(t))$ is singular for every $t \in I$.
\end{prop}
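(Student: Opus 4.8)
The plan is to exploit the upper-triangular structure of $D_\eta g_0(\zeta)$ established in Lemma~\ref{lm:Jacobian-upper-triangular}. An upper-triangular matrix is singular precisely when at least one diagonal entry vanishes, so it suffices to exhibit a vanishing diagonal entry; in fact I will show that all $S$ diagonal entries coming from the auxiliary algebraic equations vanish along the solution.

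First I would specialize the diagonal entries~\eqref{eq:aux^S-dell-V^S} and~\eqref{eq:aux^j-dell-V^j} to the unperturbed case $\delta_i^j = 0$, obtaining $\partial_{V^S}\aux_0^S(\zeta) = -\epsilon \sum_{i=1}^C (y_i^S - x_i^S)/n^S$ and $\partial_{V^j}\aux_0^j(\zeta) = -\sum_{i=1}^C (y_i^j - x_i^j)/n^j$ for $j \in \{1,\dots,S-1\}$. Everything then reduces to showing that each sum $\sum_{i=1}^C (y_i^j - x_i^j)$ vanishes along the solution.

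The key step is to establish the vapor summation equations $\sum_{i=1}^C y_i^j = 1$ on every stage $j$. This is exactly the computation carried out in the proof of Proposition~\ref{prop:equivalence-of-modified-basic-and-unperturbed-system}: substituting the liquid summation equations~\eqref{eq:x-sum,x-sum-differentiation-perturbed} into the auxiliary algebraic equations~\eqref{eq:aux-alg-eq,x-sum-differentiation} (with $\delta = 0$) makes the liquid-downstream contributions vanish and, proceeding inductively from the pot upward, produces the modified vapor summation equations $V^j \sum_{i=1}^C y_i^j = V^j$ for $j \in \{1,\dots,S-1\}$ and $\epsilon V^S \sum_{i=1}^C y_i^S = \epsilon V^S$. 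Dividing by $V^j$ and by $\epsilon V^S$, which is legitimate by the nonvanishing hypotheses~\eqref{eq:n^j-and-V^j-nonzero}, yields $\sum_{i=1}^C y_i^j = 1$ for every $j$. Combining this with $\sum_{i=1}^C x_i^j = 1$ gives $\sum_{i=1}^C (y_i^j - x_i^j) = 0$ for every $j$.

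Consequently, each of the $S$ diagonal entries displayed above equals zero, so the product of the diagonal entries of the upper-triangular matrix $D_\eta g_0(\zeta(t))$ is zero and $D_\eta g_0(\zeta(t))$ is singular for every $t \in I$. I expect the only genuine subtlety to be the derivation of the vapor summation equations along solutions; since this repeats the argument of Proposition~\ref{prop:equivalence-of-modified-basic-and-unperturbed-system}, no new obstacle arises, and the remainder is a direct reading-off of the diagonal from Lemma~\ref{lm:Jacobian-upper-triangular}.
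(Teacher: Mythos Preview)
Your proof is correct and follows essentially the same route as the paper: both arguments use the upper-triangular structure from Lemma~\ref{lm:Jacobian-upper-triangular} and show that the first $S$ diagonal entries vanish because the liquid and vapor summation equations both hold along solutions. The only cosmetic difference is that the paper obtains the vapor summation equations by chaining the equivalences of Propositions~\ref{prop:equivalence-of-basic-and-modified-basic-system} and~\ref{prop:equivalence-of-modified-basic-and-unperturbed-system}, whereas you re-derive them directly from the auxiliary algebraic equations as in~\eqref{eq:V*y-sum,2}.
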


\begin{proof}
It follows by Propositions~\ref{prop:equivalence-of-basic-and-modified-basic-system} and~\ref{prop:equivalence-of-modified-basic-and-unperturbed-system} that $t \mapsto \zeta(t)$ is also a solution to the variant of the system equations~\eqref{eq:comp-mole,tot-and-modif-comp-mole-balance}-\eqref{eq:holdup,tot-and-modif-comp-mole-balance} featuring the vapor summation equations~\eqref{eq:y-sum,tot-and-modif-comp-mole-balance}. In particular, the liquid and the vapor summation equations are satisfied along the solution $t \mapsto \zeta(t)$. Combining this with~\eqref{eq:aux^S-dell-V^S}-\eqref{eq:aux^j-dell-V^j}, we see that the first $S$ diagonal elements of the upper triangular matrix $D_\eta g_0(\zeta(t))$ are all $0$ for every $t \in I$. So, by the upper triangular structure (Lemma~\ref{lm:Jacobian-upper-triangular}), the Jacobian $D_\eta g_0(\zeta(t))$ is singular for every $t \in I$, as claimed.
\end{proof}

\begin{prop} \label{prop:regularity-of-perturbed-system}
Suppose $I \ni t \mapsto \zeta(t)$ is a solution of the perturbed system equations~\eqref{eq:tot-mole,x-sum-differentiation-perturbed}-\eqref{eq:aux-alg-eq,x-sum-differentiation-perturbed} with perturbation parameters $\delta_i^j$ chosen 
\revtext{
such that 
\begin{align} \label{eq:perturbations-increasingly-ordered}
	0 < \delta^1 \le  \dotsb \le \delta^S, 
\end{align}
where $\delta^j := \sum_{i=1}^C \delta_i^j$.} 
Suppose further that this solution satisfies 
\begin{gather} 
\epsilon(t) \ne 0, \qquad
n^1(t), \dots, n^S(t) \ne 0, \qquad
\revtext{
V^1(t), \dots, V^S(t) > 0
}, \qquad
L^1(t), \dots, L^{S-1}(t) > 0 \label{eq:basic-non-singularity-condition-1,perturbed} \\
\revtext{
\fhold'(L^j(t)) \ne 0, \qquad \partial_T \fhl(T^j(t),\bm{x}^j(t)) \ne 0 
}
\label{eq:basic-non-singularity-condition-2,perturbed}
\end{gather}
for all times $t$ in the solution interval $I$. 
Then the Jacobian $D_\eta g_\delta(\zeta(t))$ is non-singular for every $t \in I$. Additionally, the vapor summation equations along the solution become~\eqref{eq:ysum-perturbed}.
\end{prop}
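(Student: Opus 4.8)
The plan is to exploit the upper-triangular structure established in Lemma~\ref{lm:Jacobian-upper-triangular}: since $D_\eta g_\delta(\zeta(t))$ is upper triangular, its determinant is the product of its diagonal entries (with the identity blocks $D_{\bm{y}^j}\bm{\ydef}^j$ contributing $1$ each), so the Jacobian is non-singular at $t$ precisely when all the diagonal entries \eqref{eq:aux^S-dell-V^S}--\eqref{eq:holdup^j+1-dell-L^j} are non-zero. Most of these are settled immediately by the hypotheses: the blocks $D_{\bm{y}^j}\bm{\ydef}^j$ equal the identity and $\partial_{x_C^j}\xsum^j = 1$, while $\partial_{T^j}\enthdef^j = -n^j \partial_T\fhl(T^j,\bm{x}^j)$ and $\partial_{L^j}\hold^{j+1} = -\fhold'(L^j)$ are non-zero by \eqref{eq:basic-non-singularity-condition-1,perturbed}--\eqref{eq:basic-non-singularity-condition-2,perturbed}. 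The whole difficulty therefore concentrates on the first $S$ diagonal entries, namely the quantities $\partial_{V^j}\aux_\delta^j$ in \eqref{eq:aux^S-dell-V^S}--\eqref{eq:aux^j-dell-V^j}, which are exactly the entries that vanished in the unperturbed case (Proposition~\ref{prop:singularity-of-unperturbed-system}).

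First I would rewrite these entries in terms of the vapor sums $\sigma^j := \sum_{i=1}^C y_i^j$. Using the liquid summation equations~\eqref{eq:x-sum,x-sum-differentiation-perturbed} (so that $\sum_i x_i^j = 1$) and writing $\delta^j := \sum_i \delta_i^j$, the entries \eqref{eq:aux^j-dell-V^j} and \eqref{eq:aux^S-dell-V^S} reduce, up to the non-zero factors $1/n^j$ and $\epsilon/n^S$, to the quantity $w^j := \sigma^j - 1 - \delta^j$. Thus non-singularity of the first $S$ entries is equivalent to $w^j \ne 0$ for all $j$. To control $w^j$, I would sum the perturbed auxiliary algebraic equations~\eqref{eq:aux-alg-eq,x-sum-differentiation-perturbed} over all components and simplify with the liquid summation equations; this yields a first-order recursion in $j$. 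Explicitly, the $j=1$ equation gives $V^1 w^1 = -L^1\delta^1$, for $2 \le j \le S-1$ one gets $V^j w^j = V^{j-1}w^{j-1} + V^{j-1}(\delta^{j-1}-\delta^j) - L^j\delta^j$, and at the top stage $\epsilon V^S w^S = V^{S-1}w^{S-1} + V^{S-1}(\delta^{S-1}-\delta^S)$.

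The main obstacle — and the point where the ordering hypothesis~\eqref{eq:perturbations-increasingly-ordered} enters decisively — is deducing $w^j \ne 0$ from this recursion. I would do this by induction, proving the sharper statement $w^j < 0$. The base case follows from $V^1 w^1 = -L^1\delta^1 < 0$ together with $V^1 > 0$. In the inductive step, each term on the right-hand side is non-positive: $V^{j-1}w^{j-1} < 0$ by the inductive hypothesis and $V^{j-1} > 0$; the term $V^{j-1}(\delta^{j-1}-\delta^j) \le 0$ since $\delta^{j-1} \le \delta^j$ by \eqref{eq:perturbations-increasingly-ordered} and $V^{j-1} > 0$; and $-L^j\delta^j < 0$ since $L^j > 0$ and $\delta^j > 0$. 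Dividing by the positive factor $V^j$ (respectively $\epsilon V^S$, using $\epsilon \ne 0$ and $V^S > 0$) preserves the strict sign, giving $w^j < 0$. Hence every diagonal entry is non-zero and the upper-triangular Jacobian is non-singular along the solution.

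Finally, the supplementary claim that the vapor summation equations take the perturbed form~\eqref{eq:ysum-perturbed} comes out of the same computation: since $\sigma^j = 1 + \delta^j + w^j$, solving the recursion for $w^j$ by telescoping expresses each $\sigma^j$ explicitly in terms of the stream variables and the perturbation parameters, which is precisely~\eqref{eq:ysum-perturbed}. I expect the inductive sign argument to be the crux, as it is the only place where the positivity and the monotone ordering of the perturbations are genuinely used; everything else is bookkeeping against Lemma~\ref{lm:Jacobian-upper-triangular}.
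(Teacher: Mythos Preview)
Your proposal is correct and follows essentially the same route as the paper: both reduce the question to the first $S$ diagonal entries via Lemma~\ref{lm:Jacobian-upper-triangular}, both extract from the auxiliary equations~\eqref{eq:aux-alg-eq,x-sum-differentiation-perturbed} a recursion for $V^j w^j$ (the paper writes out the telescoped closed form~\eqref{eq:y^1-sum-perturbed}--\eqref{eq:y^S-sum-perturbed} directly, whereas you keep the one-step recursion and argue $w^j<0$ by induction), and both use the same sign analysis based on $L^j,V^j,\delta^j>0$ and the ordering~\eqref{eq:perturbations-increasingly-ordered}. The only cosmetic point is that your phrase ``the positive factor $\epsilon V^S$'' relies on the implicit fact $\epsilon\in[0,1]$ from~\eqref{eq:efflux-and-reflux-ratio}, which together with $\epsilon\ne 0$ indeed gives $\epsilon>0$.
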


\begin{proof}
It follows from the auxiliary equations~\eqref{eq:aux-alg-eq,x-sum-differentiation-perturbed} and the liquid summation equations~\eqref{eq:x-sum,x-sum-differentiation-perturbed} by induction over $j \in \{1,\dots,S\}$ that
\begin{align}
V^1 \sum_{i=1}^C \big( y_i^1 - x_i^1 - \delta_i^1 \big) 
&= -L^1 \delta^1, \label{eq:y^1-sum-perturbed}\\
V^j \sum_{i=1}^C \big( y_i^j - x_i^j - \delta_i^j \big) 
&= -L^1 \delta^1 - \dotsb - L^j \delta^j + V^1(\delta^1-\delta^2) + \dotsb + V^{j-1} (\delta^{j-1} - \delta^j), \\
\epsilon V^S \sum_{i=1}^C (y_i^S - x_i^S - \delta_i^S) 
&= -L^1 \delta^1 - \dotsb - L^{S-1} \delta^{S-1} + V^1(\delta^1-\delta^2) + \dotsb + V^{S-1} (\delta^{S-1} - \delta^{S}). \label{eq:y^S-sum-perturbed}
\end{align} 
Inserting~\eqref{eq:basic-non-singularity-condition-1,perturbed}, \eqref{eq:basic-non-singularity-condition-2,perturbed} and \eqref{eq:y^1-sum-perturbed}-\eqref{eq:y^S-sum-perturbed} into \eqref{eq:aux^S-dell-V^S}-\eqref{eq:holdup^j+1-dell-L^j} and using~\eqref{eq:fhv-and-fhl}, \eqref{eq:hli}, \eqref{eq:fhold} and~\eqref{eq:perturbations-increasingly-ordered}, it follows that the diagonal elements of $D_\eta g_\delta(\zeta(t))$ are all non-zero for every $t \in I$. So, by the upper triangular structure (Lemma~\ref{lm:Jacobian-upper-triangular}), the Jacobian $D_\eta g_\delta(\zeta(t))$ is non-singular for every $t \in I$, as desired. Additionally, from \eqref{eq:y^1-sum-perturbed}-\eqref{eq:y^S-sum-perturbed} and the liquid summation equations~\eqref{eq:x-sum,x-sum-differentiation-perturbed}, it immediately follows that
\begin{subequations} \label{eq:ysum-perturbed}
\begin{align} 
\sum_{i=1}^C y_i^1 &= 1 + \delta^1 - \frac{L^1}{V^1} \delta^1 \label{eq:ysum^1-perturbed} \\
\sum_{i=1}^C y_i^j &= 1 + \delta^j - \frac{L^1 \delta^1 + \dotsb + L^j \delta^j - V^1(\delta^1-\delta^2) - \dotsb - V^{j-1} (\delta^{j-1} - \delta^j)}{V^j} \\
\sum_{i=1}^C y_i^S &= 1 + \delta^S - \frac{L^1 \delta^1 + \dotsb + L^{S-1} \delta^{S-1} - V^1(\delta^1-\delta^2) - \dotsb - V^{j-1} (\delta^{j-1} - \delta^j)}{\epsilon V^S}, \label{eq:ysum^S-perturbed}
\end{align}
\end{subequations}
which concludes the proof.
\end{proof}

It should be noticed that the assumptions~\eqref{eq:basic-non-singularity-condition-1,perturbed} and \eqref{eq:basic-non-singularity-condition-2,perturbed} are very natural, \revtext{at least if one chooses the thermodynamic submodels as we do for our simulation examples (\ref{app:submodels})}. Indeed, this is evident for all but the last condition~(\ref{eq:basic-non-singularity-condition-2,perturbed}b). In order to see that also the last condition is natural, notice that by~\eqref{eq:fhv-and-fhl}-\eqref{eq:cli} and the liquid summation equation~\eqref{eq:x-sum,x-sum-differentiation-perturbed} this last condition~(\ref{eq:basic-non-singularity-condition-2,perturbed}b) is already satisfied if only 
\begin{align} \label{eq:sufficient-condition-for-index-reduction}
\cli(T^j(t)) \ge \underline{c} > 0 
\qquad \text{and} \qquad
x_i^j(t) \ge 0 \qquad (i \in \{1,\dots,C\}, j \in \{1,\dots,S\}, t \in I). 
\end{align}
Since the $\cli$ are pure-component heat capacity models, the positivity assumption~(\ref{eq:sufficient-condition-for-index-reduction}a) is very natural. In particular, the positivity assumption (\ref{eq:sufficient-condition-for-index-reduction}a) is satisfied in all the case study examples considered in this paper, \revtext{see \ref{app:satisfaction-of-index-reduction-conditions}}. 

\subsubsection{Index reduction by differentiation of the liquid summation equations: relaxation-based approach}
\label{app:relaxation-approach}

\color{black}
We collect the differential and the algebraic variables of the relaxed system~\eqref{eq:ode,relaxed}-\eqref{eq:alg,relaxed} in the (column) vectors $\xi$ and $(\eta,s)$:
\begin{align*}
\xi &:= (n^1, \dots, n^S, \hat{\bm{x}}^1, \dots, \hat{\bm{x}}^S, H^1, \dots, H^S)^\top \in \R^{(2+C)S} \nonumber\\
(\eta,s) &:= (V^S, \dots, V^1, s^1, \dots, s^S, \bm{y}^1, \dots, \bm{y}^S, T^1, \dots, T^S, x_C^1, \dots, x_C^S, L^1, \dots, L^{S-1})^\top \in \R^{(5+C)S-1},
\end{align*}
where $\hat{\bm{x}}^j := (x_1^j, \dots, x_{C-1}^j)$. Additionally, we denote the algebraic equations of the system~\eqref{eq:ode,relaxed}-\eqref{eq:alg,relaxed} by $\hold^j$, $\xsum^j$, $\ydef_i^j$, $\enthdef^j$, $\aux_0^j$, and $\slack^j$, respectively. Specifically, 
\begin{align*}
\hold^2, \dots, \hold^S, \qquad
\xsum^j, \qquad
\ydef_i^j, \qquad
\enthdef^j, \qquad
\aux_0^j \qquad (j \in \{1,\dots,S\})
\end{align*}
are defined as in \ref{app:perturbation-approach} and $\slack^j$ denotes the function of $\zeta := (\xi,\eta,s)$ defined as left-hand side minus right-hand side of the relaxed vapor summation equations~\eqref{eq:ysum,relaxed}. We collect these functions in the following order:
\begin{align*}
g &:= (\aux_0^S, \dots, \aux_0^1, \slack^1, \dots, \slack^S, \bm{\ydef}^1, \dots, \bm{\ydef}^S, \enthdef^1, \dots, \enthdef^S, \xsum^1, \dots, \xsum^S, \\
&\qquad \hold^2, \dots, \hold^S)^\top,
\end{align*}
where $\bm{\ydef}^j := (\ydef_1^j, \dots, \ydef_C^j)$, of course. With these orderings of the algebraic equations and the algebraic variables, the Jacobian $D_{(\eta,s)} g(\zeta)$ of the algebraic equations w.r.t.~the algebraic variables becomes upper triangular (Figure~\ref{fig:incidence-matrix-index-reduced-for-S=3,relaxed}). 

\begin{lm} \label{lm:Jacobian-upper-triangular,relaxed}
The Jacobian $D_{(\eta,s)} g(\zeta)$ is upper triangular for every state $\zeta \in Z$. Also, the diagonal elements of $D_{(\eta,s)} g(\zeta)$ are given by~\eqref{eq:aux^S-dell-V^S,relaxed}-\eqref{eq:holdup^j+1-dell-L^j,relaxed}. 
\end{lm}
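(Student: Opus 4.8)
The plan is to mirror the proof of Lemma~\ref{lm:Jacobian-upper-triangular}, since the relaxed system differs from the (unperturbed) intermediary system only by the extra slack variables $s = (s^1,\dots,s^S)$ and the extra relaxed vapor summation equations $\slack^j$. First I would record that, by the very definition of $\slack^j$ as left-hand minus right-hand side of~\eqref{eq:ysum,relaxed}, we have $\slack^j(\zeta) = \sum_{i=1}^C y_i^j - 1 - s^j$, so that $\slack^j$ depends among the algebraic variables only on $\bm{y}^j$ and on $s^j$, with $\partial_{s^j}\slack^j(\zeta) = -1$ and $\partial_{s^{j'}}\slack^j(\zeta) = 0$ for $j' \ne j$. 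Conversely, the slack variables $s^1,\dots,s^S$ do not appear at all in the remaining algebraic equations $\aux_0^j$, $\ydef_i^j$, $\enthdef^j$, $\xsum^j$, $\hold^j$, which are literally the equations of the perturbed system at $\delta = 0$; hence the derivatives of those equations with respect to $s$ vanish and their derivatives with respect to $\eta$ are exactly the ones already computed in Lemma~\ref{lm:Jacobian-upper-triangular}.

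With these two observations, the upper-triangular structure follows from a direct inspection of the chosen orderings, just as in Lemma~\ref{lm:Jacobian-upper-triangular}. Concretely, I would argue block by block: the leading $(\aux_0,V)$ block is unchanged and already upper triangular (in fact bidiagonal), because each $\aux_0^j$ involves only the stream variables $V^j$ and $V^{j-1}$, which sit on and immediately to the right of the diagonal under the reversed ordering $V^S,\dots,V^1$. The inserted $(\slack,s)$ block is diagonal with entries $-1$ by the first observation, and crucially $\partial_{V}\slack^j = 0$, so the strictly lower-left $(\slack,V)$ block vanishes and no below-diagonal entries are created by the insertion. All remaining blocks $(\bm{\ydef},\bm{y})$, $(\enthdef,T)$, $(\xsum,x_C)$, $(\hold,L)$ are inherited verbatim from Lemma~\ref{lm:Jacobian-upper-triangular} and contribute no below-diagonal entries, because $\ydef_i^j$, $\enthdef^j$, $\xsum^j$, $\hold^{j+1}$ each depend only on algebraic variables lying at or to the right of their diagonal position. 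This establishes that $D_{(\eta,s)}g(\zeta)$ is upper triangular for every $\zeta \in Z$, as depicted in Figure~\ref{fig:incidence-matrix-index-reduced-for-S=3,relaxed}.

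Finally, reading off the diagonal is immediate: the diagonal entries coming from $\aux_0^j$, $\bm{\ydef}^j$, $\enthdef^j$, $\xsum^j$, $\hold^{j+1}$ coincide with those in~\eqref{eq:aux^S-dell-V^S}-\eqref{eq:holdup^j+1-dell-L^j} specialized to $\delta = 0$, while the newly inserted diagonal entries are $\partial_{s^j}\slack^j = -1$; together these are exactly the expressions~\eqref{eq:aux^S-dell-V^S,relaxed}-\eqref{eq:holdup^j+1-dell-L^j,relaxed} claimed in the statement. I do not anticipate any genuine difficulty here, as the entire argument is a bookkeeping verification inherited from Lemma~\ref{lm:Jacobian-upper-triangular}. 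The one point that warrants care is confirming that inserting the slack rows and columns between the $V$-block and the $\bm{y}$-block does not spoil triangularity, that is, checking that $\slack^j$ has no dependence on the $V$ variables and no dependence on slacks $s^{j'}$ with $j' < j$; but this is immediate from the explicit form $\slack^j = \sum_{i=1}^C y_i^j - 1 - s^j$.
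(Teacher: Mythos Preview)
Your proposal is correct and follows essentially the same approach as the paper, namely a direct verification of the upper-triangular structure from the explicit form of the algebraic equations and the chosen orderings. The paper's own proof is a two-sentence ``it is straightforward to verify'' statement, so your more detailed block-by-block argument (in particular the explicit check that $\slack^j$ depends among the algebraic variables only on $s^j$ and $\bm{y}^j$, and that no other equation depends on $s$) simply spells out what the paper leaves implicit.
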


\begin{proof}
It is straightforward to verify from the definition of $g$ and the underlying algebraic equations of the relaxed system~\eqref{eq:ode,relaxed}-\eqref{eq:alg,relaxed} that the Jacobian $D_{(\eta,s)} g(\zeta)$ is upper triangular. It is also straightforward to verify that its diagonal elements or diagonal blocks (in the case of the vapor-fraction-defining equations) are given by
\begin{align}
\partial_{V^S} \aux_0^S(\zeta) &= -\epsilon \sum_{i=1}^C \big( y_i^S - x_i^S)/n^S, \label{eq:aux^S-dell-V^S,relaxed}\\
\partial_{V^j} \aux_0^j(\zeta) &= -\sum_{i=1}^C \big( y_i^j - x_i^j)/n^j \qquad (j \in \{S-1, \dots, 1\}), \label{eq:aux^j-dell-V^j,relaxed}\\
\partial_{s^j} \slack^j(\zeta) &= -1 \qquad (j \in \{1,\dots,S\}), \label{eq:slack^j-dell-s^j,relaxed} \\
D_{\bm{y}^j} \bm{\ydef}^j(\zeta) &= \operatorname{diag}(1,\dots,1) \in \R^{C \times C} \qquad (j \in \{1,\dots,S\}), \label{eq:equil^j-dell-y^j,relaxed} \\
\partial_{T^j} \enthdef^j(\zeta) &= -n^j \partial_T \fhl(T^j,\bm{x}^j) \qquad (j \in \{1,\dots,S\}), \label{eq:enth^j-dell-T^j,relaxed} \\
\partial_{x_C^j} \xsum^j(\zeta) &= 1 \qquad (j \in \{1,\dots,S\}), \label{eq:xsum^j-dell-x_C^j,relaxed} \\
\partial_{L^j} \hold^{j+1}(\zeta) &= - \fhold'(L^j) \qquad (j \in \{1,\dots,S-1\}), \label{eq:holdup^j+1-dell-L^j,relaxed}
\end{align}
which concludes the proof.
\end{proof}

\begin{figure}
	\centering
	\includegraphics[width=0.5\textwidth]{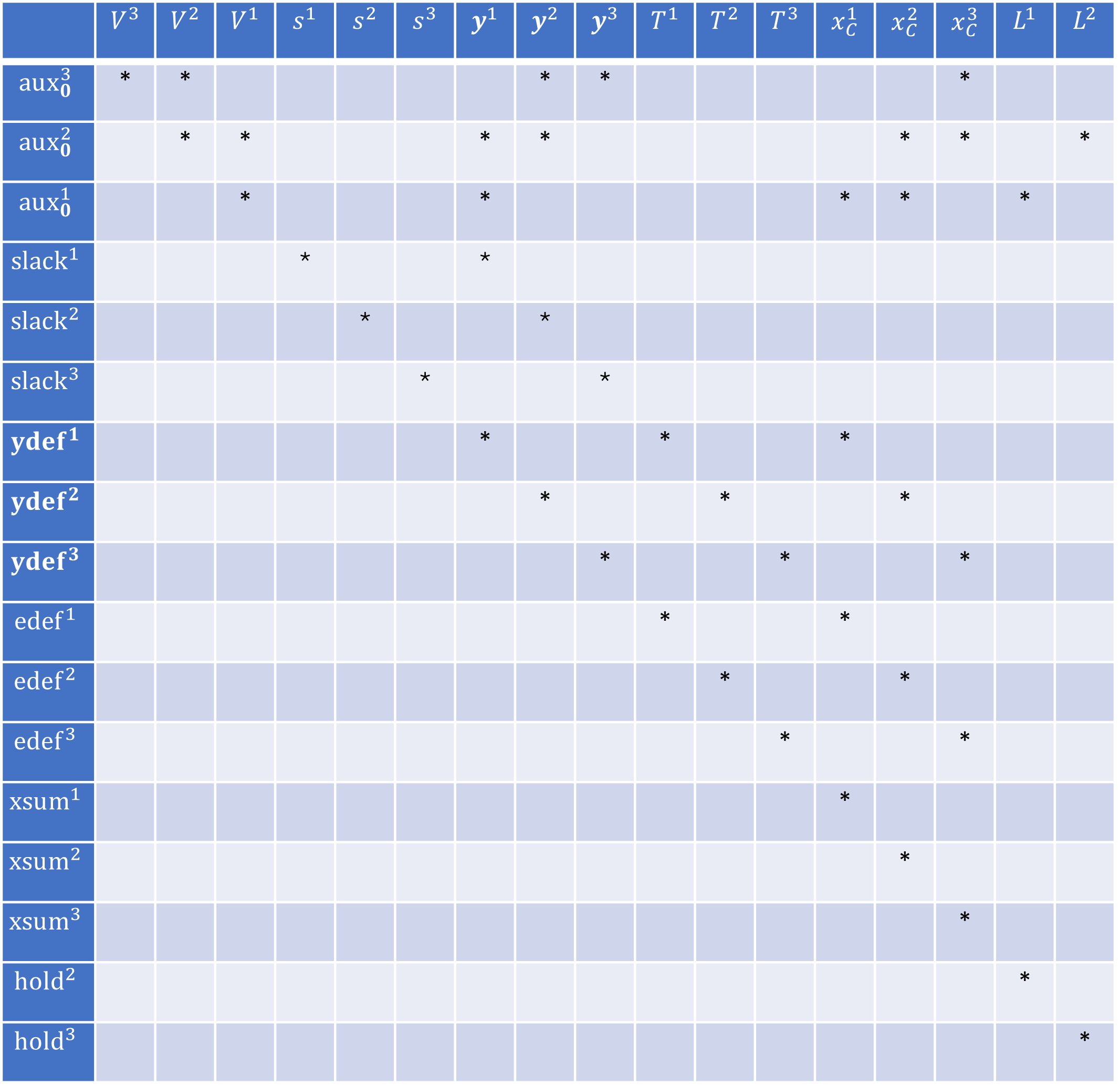}
	\caption{Structure of the Jacobian $D_{(\eta,s)} g(\zeta)$ of the algebraic equations of~\eqref{eq:ode,relaxed}-\eqref{eq:alg,relaxed} w.r.t.~the algebraic variables in the special case of $S=3$ stages. An empty cell stands for a zero entry or block of that matrix, while a cell with an asterisk stands for a generally non-zero entry or block}
	\label{fig:incidence-matrix-index-reduced-for-S=3,relaxed}
\end{figure}

\begin{prop} \label{prop:regularity-of-relaxed-system}
Suppose $I \ni t \mapsto \zeta(t)$ is a solution of the relaxed system equations~\eqref{eq:ode,relaxed}-\eqref{eq:alg,relaxed} satisfying 
\begin{gather} 
\epsilon(t) \ne 0, \qquad
n^1(t), \dots, n^S(t) \ne 0, \qquad \fhold'(L^j(t)) \ne 0, \qquad \partial_T \fhl(T^j(t),\bm{x}^j(t)) \ne 0 \label{eq:basic-non-singularity-condition,relaxed}
\end{gather}
for all times $t$ in the solution interval $I$. 
Then the Jacobian $D_{(\eta,s)} g(\zeta(t))$ is non-singular for every $t \in I$ if and only if
\begin{align}
s^1(t), \dots, s^S(t) \ne 0 \qquad (t \in I). 
\end{align}
\end{prop}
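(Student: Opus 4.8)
The plan is to exploit the upper-triangular structure of $D_{(\eta,s)} g(\zeta)$ established in Lemma~\ref{lm:Jacobian-upper-triangular,relaxed}. Since a square upper-triangular matrix is invertible precisely when all of its diagonal entries are non-zero, the entire claim reduces to determining, along the given solution, which of the diagonal entries~\eqref{eq:aux^S-dell-V^S,relaxed}-\eqref{eq:holdup^j+1-dell-L^j,relaxed} can vanish and which cannot. So first I would simply go through these diagonal entries one group at a time.

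The diagonal entries coming from the relaxed vapor summation equations, the vapor-fraction-defining equations, and the liquid summation equations are unconditionally non-zero: by~\eqref{eq:slack^j-dell-s^j,relaxed}, \eqref{eq:equil^j-dell-y^j,relaxed} and~\eqref{eq:xsum^j-dell-x_C^j,relaxed} they equal $-1$, the $C \times C$ identity block, and $1$, respectively. The diagonal entries~\eqref{eq:enth^j-dell-T^j,relaxed} from the enthalpy-defining equations, namely $-n^j \partial_T \fhl(T^j,\bm{x}^j)$, and the diagonal entries~\eqref{eq:holdup^j+1-dell-L^j,relaxed} from the holdup equations, namely $-\fhold'(L^j)$, are non-zero by the standing assumptions~\eqref{eq:basic-non-singularity-condition,relaxed}. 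Hence none of these entries obstructs invertibility, and the only diagonal entries that remain to be analysed are the $S$ entries~\eqref{eq:aux^S-dell-V^S,relaxed}-\eqref{eq:aux^j-dell-V^j,relaxed} coming from the new algebraic (auxiliary) equations.

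The crux of the argument is to rewrite these remaining entries in terms of the slack variables. To this end I would combine the liquid summation equations~\eqref{eq:x-sum,x-sum-differentiation-perturbed}, which give $\sum_{i=1}^C x_i^j = 1$, with the relaxed vapor summation equations~\eqref{eq:ysum,relaxed}, which give $\sum_{i=1}^C y_i^j = 1 + s^j$, to obtain the key identity
\[
\sum_{i=1}^C (y_i^j - x_i^j) = s^j \qquad (j \in \{1,\dots,S\})
\]
along the solution. Substituting this into~\eqref{eq:aux^S-dell-V^S,relaxed} and~\eqref{eq:aux^j-dell-V^j,relaxed} collapses those diagonal entries to
\[
\partial_{V^S} \aux_0^S(\zeta) = -\epsilon s^S/n^S, \qquad \partial_{V^j} \aux_0^j(\zeta) = -s^j/n^j \quad (j \in \{1,\dots,S-1\}).
\]
Since $\epsilon(t) \ne 0$ and $n^1(t),\dots,n^S(t) \ne 0$ by~\eqref{eq:basic-non-singularity-condition,relaxed}, each of these entries is non-zero exactly when the corresponding slack variable $s^j(t)$ is non-zero.

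Putting the pieces together, all diagonal entries of the upper-triangular matrix $D_{(\eta,s)} g(\zeta(t))$ are non-zero if and only if $s^1(t),\dots,s^S(t) \ne 0$, which by the upper-triangular structure is equivalent to the non-singularity of $D_{(\eta,s)} g(\zeta(t))$; this yields both directions of the claimed equivalence at once. I do not expect a serious obstacle here: the proof is essentially a bookkeeping exercise over the diagonal of the Jacobian, and the one genuinely substantive step is the summation identity $\sum_{i=1}^C (y_i^j - x_i^j) = s^j$, which isolates exactly the slack dependence. The only point requiring a little care is to confirm that no off-diagonal entry can ever compensate for a vanishing diagonal entry, but this is automatic because the matrix is triangular, so its determinant is literally the product of the diagonal entries.
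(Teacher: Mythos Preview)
Your proposal is correct and follows essentially the same approach as the paper: exploit the upper-triangular structure from Lemma~\ref{lm:Jacobian-upper-triangular,relaxed}, verify that all diagonal entries except~\eqref{eq:aux^S-dell-V^S,relaxed}-\eqref{eq:aux^j-dell-V^j,relaxed} are non-zero under~\eqref{eq:basic-non-singularity-condition,relaxed}, and then use the liquid and relaxed vapor summation equations to reduce those remaining entries to $-\epsilon s^S/n^S$ and $-s^j/n^j$. The paper's own proof is a one-sentence sketch of exactly this argument; you have simply written out the bookkeeping in full.
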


\begin{proof}
In view of the upper triangular structure of the Jacobian $D_{(\eta,s)} g(\zeta)$ (Lemma~\ref{lm:Jacobian-upper-triangular,relaxed}) and the formulas~\eqref{eq:aux^S-dell-V^S,relaxed}-\eqref{eq:holdup^j+1-dell-L^j,relaxed} for its diagonal elements and the relaxed vapor summation equations~\eqref{eq:ysum,relaxed}, the asserted equivalence is clear. 
\end{proof}
\color{black}

\subsubsection{Index reduction by differentiation of the vapor summation equations}
\label{app:y-summation-approach}

\color{black}
We collect the differential and the algebraic variables of the equations~\eqref{eq:tot-mole,y-sum-differentiation}-\eqref{eq:vlesum,y-differentiation} in the (column) vectors $\xi$ and $\eta$:
\begin{align*}
\xi &:= (n^1, \dots, n^S, \hat{\bm{x}}^1, \dots, \hat{\bm{x}}^S)^\top \in \R^{(1+C)S} \nonumber\\
\eta &:= (V^S, \dots, V^1, \bm{y}^1, \dots, \bm{y}^S, H^1, \dots, H^S, T^1, \dots, T^S, x_C^1, \dots, x_C^S, L^1, \dots, L^{S-1})^\top \in \R^{(5+C)S-1},
\end{align*}
where $\hat{\bm{x}}^j := (x_1^j, \dots, x_{C-1}^j)$. Additionally, we denote the algebraic equations of the system~\eqref{eq:tot-mole,y-sum-differentiation}-\eqref{eq:vlesum,y-differentiation} by $\hold^j$, $\aebal^j$, $\xsum^j$, $\ydef_i^j$, $\enthdef^j$, $\ysum^j$, respectively. Specifically, 
\begin{align*}
\hold^2, \dots, \hold^S, \qquad
\aebal^j, \qquad
\xsum^j, \qquad
\ydef_i^j, \qquad
\enthdef^j, \qquad
\ysum^j \qquad (j \in \{1,\dots,S\})
\end{align*}
denote the functions of the state variables $\zeta := (\xi,\eta)$ defined as left-hand side minus right-hand side of the holdup equations~\eqref{eq:holdup-j,y-sum-differentiation}, of the algebraicized enthalpy balance equations~\eqref{eq:enth,y-sum-differentiation}, of the liquid summation equations~\eqref{eq:x-sum,y-sum-differentiation}, of the vapor-fraction-defining equations~\eqref{eq:y-def,y-sum-differentiation}, of the enthalpy-defining equations~\eqref{eq:enth-def,y-sum-differentiation}, and of the equilibrium summation equations~\eqref{eq:vlesum,y-differentiation}, respectively. We collect these functions in the following order:
\begin{align*}
g &:= (\aebal^S, \dots, \aebal^1, \bm{\ydef}^1, \dots, \bm{\ydef}^S, \enthdef^1, \dots, \enthdef^S, \ysum^1, \dots, \ysum^S, \xsum^1, \dots, \xsum^S, \\
&\qquad \hold^2, \dots, \hold^S)^\top,
\end{align*}
where $\bm{\ydef}^j := (\ydef_1^j, \dots, \ydef_C^j)$, of course. With these orderings of the algebraic equations and the algebraic variables, the Jacobian $D_\eta g(\zeta)$ becomes upper triangular (Figure~\ref{fig:incidence-matrix-index-reduced-for-S=3}). 

\begin{lm} \label{lm:Jacobian-upper-triangular,y-sum-differentiation}
The Jacobian $D_\eta g(\zeta)$ is upper triangular for every state $\zeta \in Z$. Also, the diagonal elements of $D_\eta g(\zeta)$ are given by~\eqref{eq:aebal^S-dell-V^S}-\eqref{eq:holdup^j+1-dell-L^j,y-sum-differentiation}. 
\end{lm}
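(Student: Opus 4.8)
The plan is to prove this lemma exactly as its two counterparts, Lemma~\ref{lm:Jacobian-upper-triangular} and Lemma~\ref{lm:Jacobian-upper-triangular,relaxed}: the assertion is really a structural (combinatorial) statement about which algebraic variable enters which algebraic equation, supplemented by a one-line differentiation for each diagonal entry. Concretely, I would walk through the six groups of algebraic equations in the order in which they are stacked in $g$, namely $\aebal^S,\dots,\aebal^1$, then $\bm{\ydef}^1,\dots,\bm{\ydef}^S$, then $\enthdef^1,\dots,\enthdef^S$, then $\ysum^1,\dots,\ysum^S$, then $\xsum^1,\dots,\xsum^S$, and finally $\hold^2,\dots,\hold^S$, and for each one verify that it has no nonzero derivative with respect to any algebraic variable sitting strictly to the left of its designated diagonal column in the chosen ordering of $\eta$.

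The five lower groups are immediate, because none of them involves any vapor stream $V^k$ at all, and within its own column group each involves only its diagonal variable. Indeed, $\ydef_i^j = y_i^j - \fvlei(P,T^j,\bm{x}^j)$ touches, within $\eta$, only $y_i^j$ (giving the identity diagonal block $D_{\bm{y}^j}\bm{\ydef}^j = \operatorname{diag}(1,\dots,1)$) together with $T^j$ and $x_C^j$, both to the right of the $\bm{y}$-columns; likewise $\enthdef^j = H^j - n^j\fhl(T^j,\bm{x}^j)$ yields the diagonal entry $\partial_{H^j}\enthdef^j = 1$ and otherwise only $T^j, x_C^j$; the equilibrium summation $\ysum^j = \sum_{i=1}^C\fvlei(P,T^j,\bm{x}^j)-1$ is written directly in terms of $\fvlei$ (not of the $y$-variables), so it yields $\partial_{T^j}\ysum^j = \sum_{i=1}^C \partial_T \fvlei(P,T^j,\bm{x}^j)$ and otherwise depends only on $x_C^j$; $\xsum^j$ yields $\partial_{x_C^j}\xsum^j = 1$; and $\hold^{j+1}$ yields $\partial_{L^j}\hold^{j+1} = -\fhold'(L^j)$, which is precisely \eqref{eq:holdup^j+1-dell-L^j,y-sum-differentiation}. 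Since the only algebraic variables preceding these five groups are the $V^k$, no entry below the diagonal can arise.

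The only delicate block is the algebraicized enthalpy balances $\aebal^S,\dots,\aebal^1$, whose diagonal sits in the leading $V$-columns $V^S,\dots,V^1$. Because these columns come first, upper triangularity reduces here to the single assertion that $\aebal^j$ does not depend on $V^{j+1},\dots,V^S$ (the columns strictly to the left of its diagonal) and that $\partial_{V^j}\aebal^j$ is generically nonzero. Reading off \eqref{eq:enth,y-sum-differentiation}, each $\aebal^j$ couples only the two adjacent streams $V^j$ and $V^{j-1}$, so the required sub-diagonal entries vanish, and differentiating \eqref{eq:enth,y-sum-differentiation} with respect to $V^j$ gives, for an interior stage, $\partial_{V^j}\aebal^j = \fhv(T^j,\bm{y}^j) - \fhl(T^j,\bm{x}^j) - \bm{a}(P,T^j,\bm{x}^j)\cdot(\bm{y}^j-\bm{x}^j)$, which is one of the formulas in the range \eqref{eq:aebal^S-dell-V^S}--\eqref{eq:holdup^j+1-dell-L^j,y-sum-differentiation} (the stages $j=S$ and $j=1$ carrying the obvious boundary modifications).

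I expect the main obstacle to be exactly this $\aebal$ block, since the coefficients $\bm{a}$ and $b$ from \eqref{eq:a(P,T,x)-def}--\eqref{eq:b(P,T,x)-def} conceal partial derivatives of $\fvlei$ and $\fhl$ and therefore require a careful check that they smuggle in no hidden dependence on a vapor stream located to the left of the diagonal. This worry dissolves once one notes that $\bm{a}(P,T^j,\bm{x}^j)$ and $b(P,T^j,\bm{x}^j)$ depend only on $P$, $T^j$ and $\bm{x}^j$ -- hence on the algebraic variables $T^j$ and $x_C^j$, which both sit to the right of the $V$-columns -- and not on any $V^k$. A second point worth flagging is that, after algebraicization, the $\aebal^j$ no longer contain $H^j$ at all, so the entire $H$-column block is zero in the $\aebal$ rows; this is consistent with, and indeed needed for, the triangular pattern. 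With these two observations in place, the delicate block causes no genuine difficulty and the lemma follows as its two predecessors did.
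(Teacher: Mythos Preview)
Your proposal is correct and follows the same approach as the paper, which simply states that the upper-triangular structure and the diagonal formulas are ``straightforward to verify'' and then lists them. Your walk-through is in fact considerably more detailed than the paper's proof, and your observation that $\ysum^j$ must be read as $\sum_i \fvlei(P,T^j,\bm{x}^j)-1$ rather than $\sum_i y_i^j-1$ is exactly the right point (indeed, the paper's displayed list of diagonal entries appears to omit $\partial_{T^j}\ysum^j = \sum_i \partial_T\fvlei(P,T^j,\bm{x}^j)$, which you correctly supply). One small inaccuracy: your remark that the vanishing of the $H$-column block in the $\aebal$ rows is ``needed for'' the triangular pattern is not quite right, since the $H$-columns lie strictly to the right of the $V$-columns and hence entirely above the diagonal in those rows; whether that block is zero or not is immaterial for upper triangularity.
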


\begin{proof}
It is straightforward to verify from the definition of $g$ and the underlying algebraic equations of the alternative  system~\eqref{eq:tot-mole,y-sum-differentiation}-\eqref{eq:vlesum,y-differentiation} that the Jacobian $D_\eta g(\zeta)$ is upper triangular. It is also straightforward to verify that its diagonal elements or diagonal blocks (in the case of the vapor-fraction-defining equations) are given by
\begin{align}
\partial_{V^S} \aebal^S(\zeta) &= -\epsilon \fhl(T^S,\bm{x}^S) - \epsilon \bm{a}(P,T^S,\bm{x}^S) \cdot (\bm{y}^S - \bm{x}^S) \nonumber \\
&\quad - (1-\epsilon) \fhl(\Tcond,\bm{y}^S) + \fhv(T^S,\bm{y}^S), \label{eq:aebal^S-dell-V^S}\\
\partial_{V^j} \aebal^j(\zeta) &= -\fhl(T^j,\bm{x}^j) - \bm{a}(P,T^j,\bm{x}^j) \cdot (\bm{y}^j - \bm{x}^j) + \fhv(T^j,\bm{y}^j) \qquad (j \in \{S-1, \dots, 1\}), \label{eq:aebal^j-dell-V^j}\\
D_{\bm{y}^j} \bm{\ydef}^j(\zeta) &= \operatorname{diag}(1,\dots,1) \in \R^{C \times C} \qquad (j \in \{1,\dots,S\}), \label{eq:ydef^j-dell-y^j,y-sum-differentiation} \\
\partial_{H^j} \enthdef^j(\zeta) &= 1 \qquad (j \in \{1,\dots,S\}), \label{eq:edef^j-dell-H^j} \\
\partial_{x_C^j} \xsum^j(\zeta) &= 1 \qquad (j \in \{1,\dots,S\}) \label{eq:xsum^j-dell-x_C^j,y-sum-differentiation}  \\
\partial_{L^j} \hold^{j+1}(\zeta) &= - \fhold'(L^j) \qquad (j \in \{1,\dots,S-1\}), \label{eq:holdup^j+1-dell-L^j,y-sum-differentiation}
\end{align}
which concludes the proof.
\end{proof}

\begin{figure}
	\centering
	\includegraphics[width=0.5\textwidth]{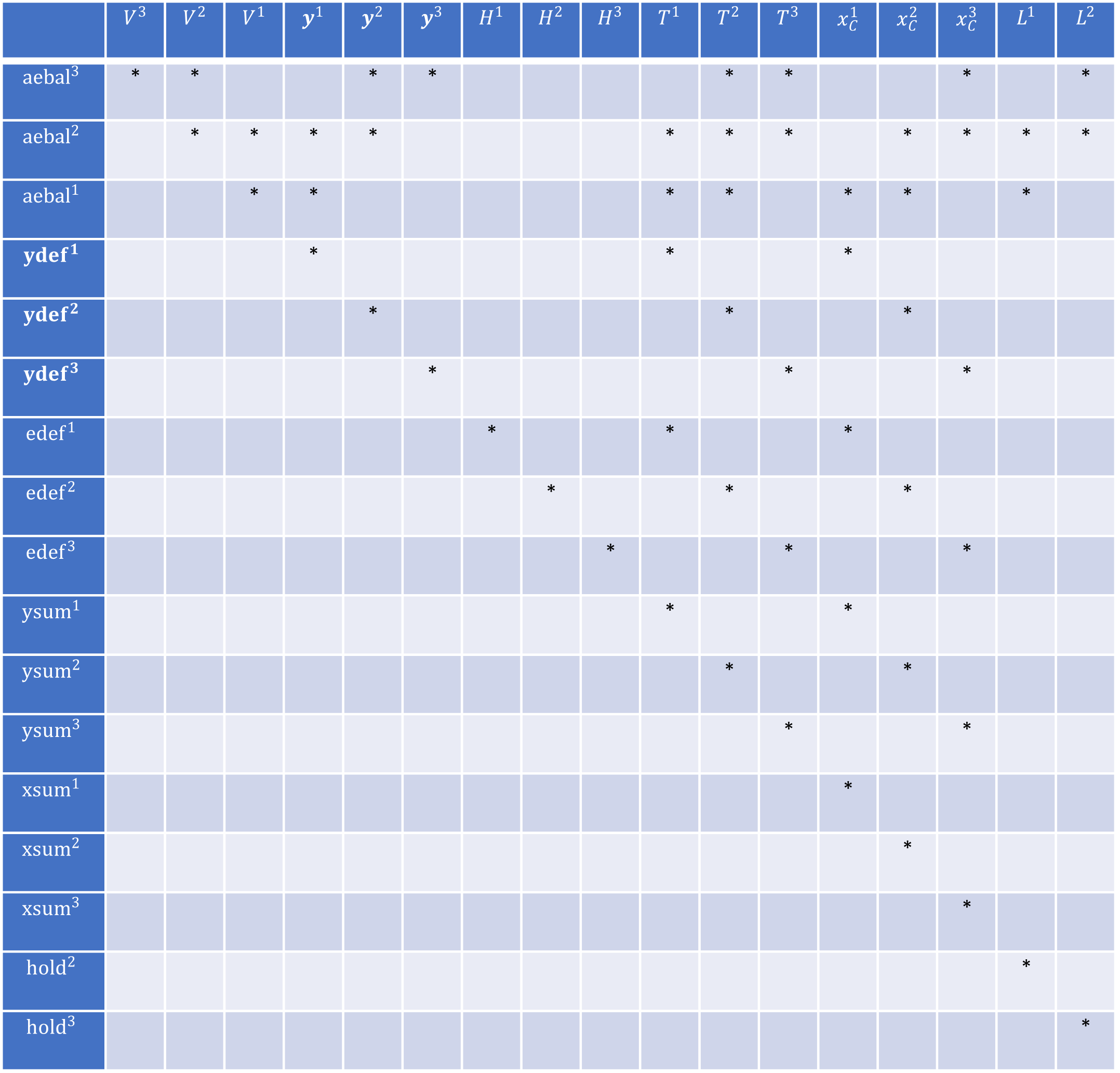}
	\caption{Structure of the Jacobian $D_\eta g(\zeta)$ of the algebraic equations of~\eqref{eq:tot-mole,y-sum-differentiation}-\eqref{eq:vlesum,y-differentiation} w.r.t.~the algebraic variables in the special case of $S=3$ stages. An empty cell stands for a zero entry or block of that matrix, while a cell with an asterisk stands for a generally non-zero entry or block}
	\label{fig:incidence-matrix-index-reduced-y-sum-differentiation-for-S=3}
\end{figure}

We now work out when exactly the alternative system~\eqref{eq:tot-mole,y-sum-differentiation}-\eqref{eq:vlesum,y-differentiation} is non-singular along its solutions.

\begin{prop} \label{prop:regularity-of-alternative-system}
Suppose $I \ni t \mapsto \zeta(t)$ is a solution of the alternative system equations~\eqref{eq:tot-mole,y-sum-differentiation}-\eqref{eq:vlesum,y-differentiation}. Suppose further that this solution satisfies 
\begin{align} 
\fhold'(L^j(t)) \ne 0
\qquad \text{and} \qquad
\sum_{i=1}^C \partial_T \fvlei(P(t),T^j(t),\bm{x}^j(t)) \ne 0 
\end{align}
for all times $t$ in the solution interval $I$ and all $j \in \{1,\dots,S\}$. Then the Jacobian $D_\eta g(\zeta(t))$ is non-singular for every $t \in I$ if and only if
\begin{subequations} \label{eq:regularity-characterization,y-sum-differentiation}
	\begin{align}
		 &-\epsilon(t) \fhl(T^S(t),\bm{x}^S(t)) - \epsilon(t) \bm{a}(P(t),T^S(t),\bm{x}^S(t)) \cdot (\bm{y}^S(t) - \bm{x}^S(t)) \nonumber \\
		 &\qquad - (1-\epsilon(t)) \fhl(\Tcond(t),\bm{y}^S(t)) + \fhv(T^S(t),\bm{y}^S(t)) 
\ne 0, \label{eq:regularity-characterization-1,y-sum-differentiation} \\ 
		 &-\fhl(T^j(t),\bm{x}^j(t)) - \bm{a}(P(t),T^j(t),\bm{x}^j(t)) \cdot (\bm{y}^j(t) - \bm{x}^j(t)) + \fhv(T^j(t),\bm{y}^j(t)) \ne 0 
		 \label{eq:regularity-characterization-j,y-sum-differentiation}     
    \end{align}
\end{subequations}
for every $t \in I$ and every $j \in \{S-1, \dots, 1\}$.
\end{prop}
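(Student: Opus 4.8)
The plan is to reduce everything to the upper-triangular structure of the Jacobian already established in Lemma~\ref{lm:Jacobian-upper-triangular,y-sum-differentiation}. Since $D_\eta g(\zeta)$ is block upper triangular, its determinant is, up to sign, the product of the determinants of its diagonal blocks, so the matrix is non-singular precisely when every diagonal block is non-singular. My strategy is therefore to walk through the six diagonal blocks, separate out those that are automatically non-singular under the standing hypotheses, and isolate the single block that carries the actual content of the characterization~\eqref{eq:regularity-characterization,y-sum-differentiation}.

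First I would dispatch the blocks that are unconditionally non-singular. By~\eqref{eq:ydef^j-dell-y^j,y-sum-differentiation} the $\bm{\ydef}\leftrightarrow\bm{y}$ block is the $SC\times SC$ identity, and by~\eqref{eq:edef^j-dell-H^j} and~\eqref{eq:xsum^j-dell-x_C^j,y-sum-differentiation} the $\enthdef\leftrightarrow H$ and $\xsum\leftrightarrow x_C$ diagonal entries are all equal to $1$; each of these contributes a factor that never vanishes. Next, the hypotheses of the proposition handle two further blocks: by~\eqref{eq:holdup^j+1-dell-L^j,y-sum-differentiation} the $\hold\leftrightarrow L$ diagonal entries equal $-\fhold'(L^j(t))$, which are non-zero since $\fhold'(L^j(t))\ne 0$; and the $\ysum\leftrightarrow T$ diagonal entries, obtained by differentiating the equilibrium summation equations~\eqref{eq:vlesum,y-differentiation} with respect to $T^j$, equal $\sum_{i=1}^C \partial_T \fvlei(P(t),T^j(t),\bm{x}^j(t))$, which are non-zero by assumption. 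Thus all diagonal factors from these four blocks are non-zero for every $t\in I$.

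What then remains is exactly the $\aebal\leftrightarrow V$ block. By~\eqref{eq:aebal^S-dell-V^S} and~\eqref{eq:aebal^j-dell-V^j} its diagonal entries $\partial_{V^S}\aebal^S(\zeta(t))$ and $\partial_{V^j}\aebal^j(\zeta(t))$ coincide verbatim with the left-hand sides of~\eqref{eq:regularity-characterization-1,y-sum-differentiation} and~\eqref{eq:regularity-characterization-j,y-sum-differentiation}. Since all the other diagonal factors are already known to be non-zero, the product of all diagonal entries is non-zero if and only if these $S$ remaining entries are non-zero; by the upper-triangular structure this is equivalent to non-singularity of the entire Jacobian. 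This yields both directions of the claimed equivalence at once, with no further computation required.

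I do not anticipate a genuine obstacle here, since Lemma~\ref{lm:Jacobian-upper-triangular,y-sum-differentiation} performs the only nontrivial work. The one point requiring care will be the bookkeeping of the six diagonal blocks against the chosen ordering of variables and equations, so as to confirm that the $\ysum\leftrightarrow T$ block really produces the factor $\sum_{i=1}^C \partial_T \fvlei$ and that the $\aebal\leftrightarrow V$ block is the unique block not controlled by the standing hypotheses. Once that alignment is verified, the ``if and only if'' follows immediately from the fact that an upper-triangular matrix is invertible exactly when none of its diagonal entries vanishes. It is worth emphasizing, in contrast to Proposition~\ref{prop:regularity-of-perturbed-system}, that the surviving conditions~\eqref{eq:regularity-characterization,y-sum-differentiation} are intransparent: they mix $\fhl$, $\fhv$, and the derivative-laden quantity $\bm{a}(P,T^j,\bm{x}^j)$, so it is genuinely hard to tell a priori when they hold.
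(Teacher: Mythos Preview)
Your proposal is correct and follows the same route as the paper: invoke the upper-triangular structure from Lemma~\ref{lm:Jacobian-upper-triangular,y-sum-differentiation}, then reduce non-singularity of $D_\eta g(\zeta(t))$ to non-vanishing of all diagonal entries. Your write-up is in fact more explicit than the paper's one-line proof, and in particular you correctly compute the $\ysum\leftrightarrow T$ diagonal entry $\partial_{T^j}\ysum^j(\zeta)=\sum_{i=1}^C \partial_T \fvlei(P,T^j,\bm{x}^j)$, which the statement of Lemma~\ref{lm:Jacobian-upper-triangular,y-sum-differentiation} omits from its list~\eqref{eq:aebal^S-dell-V^S}--\eqref{eq:holdup^j+1-dell-L^j,y-sum-differentiation} but which is needed to see why the second standing hypothesis is imposed.
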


\begin{proof}
In view of the upper triangular structure of the Jacobian $D_\eta g(\zeta)$ (Lemma~\ref{lm:Jacobian-upper-triangular,y-sum-differentiation}) and the formulas~\eqref{eq:aebal^S-dell-V^S}-\eqref{eq:holdup^j+1-dell-L^j,y-sum-differentiation} for its diagonal elements, the asserted equivalence is clear. 
\end{proof}

It should be noticed that in contrast to the non-singularity conditions~\eqref{eq:basic-non-singularity-condition-1,perturbed}-\eqref{eq:basic-non-singularity-condition-2,perturbed} for the perturbed system, the non-singularity conditions~\eqref{eq:regularity-characterization,y-sum-differentiation} for the alternative system~\eqref{eq:tot-mole,y-sum-differentiation}-\eqref{eq:vlesum,y-differentiation} are hard to verify rigorously for $S \ge 2$ stages. In fact, the conditions~\eqref{eq:regularity-characterization,y-sum-differentiation} seem to be easily verifiable only in the case $S = 1$ (which is not very interesting and which we therefore excluded from the very beginning to avoid case distinctions). Indeed, in the case of just $S = 1$ stage, 
the generally unwieldy conditions~\eqref{eq:regularity-characterization,y-sum-differentiation} greatly simplify to the  condition that $Q(t) \ne 0$ for every $t \in I$. Yet, for the case of $S \ge 2$ stages, a more convenient re-formulation of~\eqref{eq:regularity-characterization,y-sum-differentiation} seems difficult. 
In particular, it is not clear (to us) if, when, or why the conventional index reduction approach really does lead to an index-reduced system.
\color{black}

\subsection{Infinite-reflux initialization}
\label{app:infinite-reflux-initialization}

\color{black}
In this section, we collect some basic facts about the inner loop (Algorithm~\ref{alg:inner-loop}) of our stationary initialization. We begin with a derivation of Algorithm~\ref{alg:inner-loop}, namely we show that every stationary solution of the intermediary unperturbed system~\eqref{eq:ode,intermediary}-\eqref{eq:alg,intermediary} with constant control inputs~\eqref{eq:u-const-u_0} (infinite reflux and constant other control inputs) can be obtained as a result of Algorithm~\ref{alg:inner-loop}. 

\begin{lm} \label{lm:stationary-solution-is-output-of-inner-algo}
Suppose $t \mapsto (\xi(t),\eta(t)) \equiv (\xi,\eta)$ is a stationary solution of the unperturbed system~\eqref{eq:ode,intermediary}-\eqref{eq:alg,intermediary} with $\epsilon \equiv 0$ and $\hat{u} \equiv (P_0,Q_0,\Tcond_0)$. Suppose further that 
\begin{align} \label{eq:inner-algo-assumption}
n^j, V^j \ne 0 \qquad \text{and} \qquad 
\fhl(T^{j+1},\bm{y}^j) - \fhv(T^j,\bm{y}^j) \ne 0
\qquad (j \in \{1,\dots,S\}),
\end{align}
where $T^{S+1} := \Tcond_0$. Then $(\xi,\eta)$ satisfies the definition formulas from Algorithm~\ref{alg:inner-loop}, that is,
\begin{gather}
\sum_{i=1}^C \fvlei(P_0,T^j,\bm{x}^j) = 1, \qquad
y_i^j = \fvlei(P_0,T^j,\bm{x}^j), 
\label{eq:inner-algo-ysum-and-ydef}\\
V^j = \begin{cases}
-\frac{Q_0}{\fhl(T^2,\bm{y}^1) - \fhv(T^1,\bm{y}^1)}, \qquad j = 1 \\
V^{j-1} \frac{\fhl(T^j,\bm{y}^{j-1}) - \fhv(T^{j-1},\bm{y}^{j-1})}{\fhl(T^{j+1},\bm{y}^j) - \fhv(T^j,\bm{y}^j)}, \qquad j \ne 1
\end{cases}
\label{eq:inner-algo-V}
\end{gather}
for $j \in \{1,\dots,S\}$, 
\begin{align} \label{eq:inner-algo-x-and-L}
\bm{x}^{j+1} = \bm{y}^j, \qquad 
L^j = V^j 
\end{align}
for $j \in \{1,\dots,S-1\}$, and 
\begin{align} \label{eq:inner-algo-n-and-H}
n^j = \fhold(L^{j-1}), \qquad
H^j = n^j \fhl(T^j,\bm{x^j})
\end{align}
for $j \in \{2,\dots,S\}$. 
\end{lm}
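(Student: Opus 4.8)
The plan is to exploit stationarity to turn every balance equation into an algebraic identity and then to march up the column, stage by stage, peeling off the structural relations. Since $(\xi,\eta)$ is stationary, all time derivatives vanish, so the total mole balances~\eqref{eq:tot-mole,x-sum-differentiation-perturbed}, the component mole balances~\eqref{eq:comp-mole,x-sum-differentiation-perturbed} (for $i \in \{1,\dots,C-1\}$), and the enthalpy balances~\eqref{eq:enth,x-sum-differentiation-perturbed} all reduce to ``right-hand side $=0$'' equations, to be used together with the genuinely algebraic equations~\eqref{eq:x-sum,x-sum-differentiation-perturbed}--\eqref{eq:holdup-j,x-sum-differentiation-perturbed} and the auxiliary equations~\eqref{eq:aux-alg-eq,x-sum-differentiation-perturbed} (all with $\delta = 0$). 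The second identity in~\eqref{eq:inner-algo-ysum-and-ydef}, namely $y_i^j = \fvlei(P_0,T^j,\bm{x}^j)$, is then immediate for all $i,j$: it is just the vapor-fraction-defining equation~\eqref{eq:y-def,x-sum-differentiation-perturbed} evaluated at $P = P_0$.

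Next I would extract the stream relations. From the stationary pot balance $0 = L^1 - V^1$ and, inductively, from the intermediate balances $0 = L^j - V^j - L^{j-1} + V^{j-1}$ one obtains $L^j = V^j$ for $j \in \{1,\dots,S-1\}$ (the head balance with $\epsilon = 0$ merely reconfirms $L^{S-1} = V^{S-1}$), which is the second identity in~\eqref{eq:inner-algo-x-and-L}. The core is then an induction up the column establishing simultaneously $\bm{x}^{j+1} = \bm{y}^j$ and the bubble-point identity $\sum_{i=1}^C y_i^j = 1$. In the base case, the stationary pot component balance together with $L^1 = V^1 \ne 0$ and $n^1 \ne 0$ forces $x_i^2 = y_i^1$ for $i < C$, while the auxiliary equation~\eqref{eq:aux-alg-eq-1,x-sum-differentiation-perturbed}, combined with the liquid summation~\eqref{eq:x-sum,x-sum-differentiation-perturbed}, yields $\sum_i y_i^1 = 1$ and hence closes the last component to $x_C^2 = y_C^1$. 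In the inductive step, the hypothesis $\bm{x}^j = \bm{y}^{j-1}$ makes the incoming-vapor term of~\eqref{eq:comp-mole-j,x-sum-differentiation-perturbed} vanish, so $x_i^{j+1} = y_i^j$ for $i<C$; the auxiliary equation~\eqref{eq:aux-alg-eq-j,x-sum-differentiation-perturbed} then propagates $\sum_i y_i^{j-1} = 1$ to $\sum_i y_i^j = 1$, and liquid summation again supplies the $C$th component. This delivers $\bm{x}^{j+1} = \bm{y}^j$ and the first identity in~\eqref{eq:inner-algo-ysum-and-ydef}, both for $j \in \{1,\dots,S-1\}$.

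With these structural identities in hand, the remaining formulas are essentially substitution. Plugging $L^j = V^j$, $\bm{x}^{j+1} = \bm{y}^j$, and $\bm{x}^j = \bm{y}^{j-1}$ into the stationary enthalpy balances~\eqref{eq:enth,x-sum-differentiation-perturbed} and solving for the vapor streams gives exactly the recursion~\eqref{eq:inner-algo-V}: the pot balance, which carries the reboiler duty $Q_0$, yields $V^1$, and each higher balance yields $V^j$ in terms of $V^{j-1}$. The denominators that arise are precisely the differences $\fhl(T^{j+1},\bm{y}^j) - \fhv(T^j,\bm{y}^j)$ (with $T^{S+1} = \Tcond_0$ for the head), which are nonzero by hypothesis~\eqref{eq:inner-algo-assumption}, so the recursion is well defined. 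Finally, the formulas~\eqref{eq:inner-algo-n-and-H} for $j \in \{2,\dots,S\}$ are nothing but the holdup equations~\eqref{eq:holdup-j,x-sum-differentiation-perturbed} and the enthalpy-defining equations~\eqref{eq:enth-def,x-sum-differentiation-perturbed}, which hold by assumption.

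The main obstacle is the bubble-point identity in~\eqref{eq:inner-algo-ysum-and-ydef} at the \emph{head} stage $j = S$. Here the induction above stops one stage short: the auxiliary equation~\eqref{eq:aux-alg-eq-S,x-sum-differentiation-perturbed}, because $\epsilon = 0$ annihilates the efflux terms, only re-derives $\sum_i y_i^{S-1} = 1$ and says nothing about $\sum_i y_i^S$. Equivalently, the general vapor-summation argument (as in Proposition~\ref{prop:regularity-of-perturbed-system}, or via the equivalences of Propositions~\ref{prop:equivalence-of-basic-and-modified-basic-system} and~\ref{prop:equivalence-of-modified-basic-and-unperturbed-system}) concludes $\sum_i y_i^S = 1$ only after dividing by $\epsilon V^S$, which is illegitimate at infinite reflux. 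I therefore expect this to be the one genuinely delicate point: establishing the head-stage bubble point needs an extra ingredient beyond the stationary algebraic relations, for instance the physical requirement that the totally condensed head vapor $\bm{y}^S$ be an admissible composition (so that the head stage is genuinely at equilibrium), or a continuity argument letting $\epsilon \downarrow 0$ through the family of finite-reflux stationary states, each of which does satisfy $\sum_i y_i^S = 1$. Once the head stage is settled, the rest is routine bookkeeping.
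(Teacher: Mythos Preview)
Your argument for stages $j \le S-1$ is correct and is essentially the paper's argument, only organised differently. The paper does not work directly with the intermediary system as you do; instead it first invokes the equivalences of Propositions~\ref{prop:equivalence-of-basic-and-modified-basic-system} and~\ref{prop:equivalence-of-modified-basic-and-unperturbed-system} to pass to the \emph{original} component mole balances~\eqref{eq:comp-mole,original}. In that formulation the stationary relation $L^j x_i^{j+1} = V^j y_i^j + L^{j-1} x_i^j - V^{j-1} y_i^{j-1}$ already holds for \emph{all} components $i$, so $\bm{x}^{j+1}=\bm{y}^j$ follows in one stroke without the separate ``close the $C$th component via the auxiliary equation'' step that your induction uses. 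The two routes are equivalent in content; the paper's is shorter, yours is more self-contained.

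The gap you flag at the head stage is real, and you are right to be uneasy about it. With $\epsilon=0$ the auxiliary equation~\eqref{eq:aux-alg-eq-S,x-sum-differentiation-perturbed} and the stationary head balances simply do not constrain $\sum_i y_i^S$: once $\bm{x}^S=\bm{y}^{S-1}$ and $n^S=\fhold(L^{S-1})$ are fixed, the head enthalpy balance is a single equation in the two unknowns $T^S$ and $V^S$, so the bubble-point condition $\sum_i \fvlei(P_0,T^S,\bm{x}^S)=1$ is not forced by the stationary intermediary system alone. The paper's own proof does not resolve this either: it appeals to Proposition~\ref{prop:equivalence-of-basic-and-modified-basic-system}, whose hypothesis $\epsilon(t)\ne 0$ is precisely what fails at infinite reflux, and then lists the bubble-point identities among the ``obvious'' ones. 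So your diagnosis is sharper than the paper's treatment; neither your proposal nor the paper supplies the missing ingredient for $j=S$, and your suggested fixes (an explicit admissibility requirement on $\bm{y}^S$, or an $\epsilon\downarrow 0$ limit) are the natural candidates.
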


\begin{proof}
Since $n^j, V^j \ne 0$ by assumption, it follows by Proposition~\ref{prop:equivalence-of-basic-and-modified-basic-system} and~\ref{prop:equivalence-of-modified-basic-and-unperturbed-system} that $t \mapsto (\xi(t),\eta(t))$ is also a solution to the system~\eqref{eq:tot-mole,tot-and-modif-comp-mole-balance}-\eqref{eq:holdup,tot-and-modif-comp-mole-balance} and  to the original system (as defined in Proposition~\ref{prop:equivalence-of-basic-and-modified-basic-system}). As a consequence of the total mole balance~\eqref{eq:tot-mole,tot-and-modif-comp-mole-balance} and the assumed constancy of the solution, it then follows that~(\ref{eq:inner-algo-x-and-L}b) is satisfied. As a consequence of the original component mole balance~\eqref{eq:comp-mole,original}, in turn, it follows that~(\ref{eq:inner-algo-x-and-L}a) is satisfied as well. Also, the relation~\eqref{eq:inner-algo-V} follows by the enthalpy balance~\eqref{eq:enth,tot-and-modif-comp-mole-balance}. All other identities of \eqref{eq:inner-algo-ysum-and-ydef}-\eqref{eq:inner-algo-n-and-H} are obvious. 
\end{proof}

We now show that, conversely, every result of Algorithm~\ref{alg:inner-loop} gives rise to a stationary solution of the intermediary unperturbed system~\eqref{eq:ode,intermediary}-\eqref{eq:alg,intermediary} with constant control inputs~\eqref{eq:u-const-u_0}.

\begin{prop} \label{prop:output-of-inner-algo-is-stationary-solution}
Suppose $\bm{x}^1 \in \mathcal{X} := \{ \bm{x} \in [0,1]^C: \sum_{i=1}^C x_i = 1\}$ and $(P_0,Q_0,\Tcond_0) \in (0,\infty)^3$ are given and that $\fhold(L) \ne 0$ for all $L \ne 0$. Suppose further that $T^1,\bm{y}^1, \bm{x}^2, T^2, \bm{y}^2, \dots, \bm{x}^S, T^S, \bm{y}^S$ and $V^1, \dots, V^S, L^1, \dots L^{S-1}$ and $n^2, \dots, n^S, H^2, \dots, H^S$ are computed according to Algorithm~\ref{alg:inner-loop}, that is, \eqref{eq:inner-algo-ysum-and-ydef}-\eqref{eq:inner-algo-n-and-H} are satisfied along with the well-definedness condition~(\ref{eq:inner-algo-assumption}b). And finally, choose $n^1 \in (0,\infty)$ arbitrarily and let $H^1 := n^1 \fhl(T^1,\bm{x}^1)$. Then the constant map $t \mapsto (\xi(t),\eta(t))$ defined by 
\begin{align}
\xi(t) &:= (n^1, \dots, n^S, \hat{\bm{x}}^1, \dots, \hat{\bm{x}}^S, H^1, \dots, H^S)^\top \\
\eta(t) &:= (V^S, \dots, V^1, \bm{y}^1, \dots, \bm{y}^S, T^1, \dots, T^S, x_C^1, \dots, x_C^S, L^1, \dots, L^{S-1})^\top
\end{align}
is a solution of the unperturbed system~\eqref{eq:ode,intermediary}-\eqref{eq:alg,intermediary} with $\epsilon \equiv 0$ and $\hat{u} \equiv (P_0,Q_0,\Tcond_0)$.
\end{prop}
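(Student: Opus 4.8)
The plan is to verify, equation by equation, that the constant map $t \mapsto (\xi(t),\eta(t))$ satisfies every equation of the intermediary unperturbed system, that is, system~\eqref{eq:tot-mole,x-sum-differentiation-perturbed}-\eqref{eq:aux-alg-eq,x-sum-differentiation-perturbed} with all $\delta_i^j = 0$. Since the candidate is constant, the differential equations~\eqref{eq:tot-mole,x-sum-differentiation-perturbed}-\eqref{eq:enth,x-sum-differentiation-perturbed} reduce to the requirement that their right-hand sides vanish, while the algebraic equations~\eqref{eq:x-sum,x-sum-differentiation-perturbed}-\eqref{eq:holdup-j,x-sum-differentiation-perturbed} and~\eqref{eq:aux-alg-eq,x-sum-differentiation} must hold outright. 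As a preliminary step I would record that all quantities appearing as denominators are nonzero: from $Q_0 > 0$ and the well-definedness condition~(\ref{eq:inner-algo-assumption}b) the closed form~\eqref{eq:inner-algo-V} gives $V^1 \ne 0$, and then, inductively (the ratio in~\eqref{eq:inner-algo-V} having nonzero numerator and denominator by~(\ref{eq:inner-algo-assumption}b)), $V^j \ne 0$ for all $j$; hence $L^j = V^j \ne 0$ by~(\ref{eq:inner-algo-x-and-L}b), and finally $n^j = \fhold(L^{j-1}) \ne 0$ for $j \ge 2$ by the hypothesis $\fhold(L) \ne 0$ for $L \ne 0$, while $n^1 \ne 0$ by choice.

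The definitional algebraic equations then hold essentially by construction. The vapor-fraction-defining equations~\eqref{eq:y-def,x-sum-differentiation-perturbed}, the enthalpy-defining equations~\eqref{eq:enth-def,x-sum-differentiation-perturbed}, and the holdup equations~\eqref{eq:holdup-j,x-sum-differentiation-perturbed} are immediate from~\eqref{eq:inner-algo-ysum-and-ydef}, \eqref{eq:inner-algo-n-and-H}, and the defining relation $H^1 := n^1 \fhl(T^1,\bm{x}^1)$. The liquid summation equations~\eqref{eq:x-sum,x-sum-differentiation-perturbed} hold because $\bm{x}^1 \in \mathcal{X}$ sums to one and, for $j \ge 2$, one has $\bm{x}^j = \bm{y}^{j-1}$ by~(\ref{eq:inner-algo-x-and-L}a) while $\sum_{i=1}^C y_i^{j-1} = \sum_{i=1}^C \fvlei(P_0,T^{j-1},\bm{x}^{j-1}) = 1$ by the bubble-point condition in~\eqref{eq:inner-algo-ysum-and-ydef}.

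The core of the argument is the vanishing of the differential right-hand sides. Using $L^j = V^j$ throughout, the total mole balances~\eqref{eq:tot-mole,x-sum-differentiation-perturbed} collapse to zero (at the head stage one also uses $\epsilon = 0$). For the component mole balances~\eqref{eq:comp-mole,x-sum-differentiation-perturbed} and the new algebraic equations~\eqref{eq:aux-alg-eq,x-sum-differentiation} I would exploit a single observation: by~(\ref{eq:inner-algo-x-and-L}a) one has $x_i^{j+1} = y_i^j$ and, reindexing, $x_i^j = y_i^{j-1}$, so that, together with $L^j = V^j$ and $L^{j-1} = V^{j-1}$, the summand belonging to component $i$ in each balance vanishes for \emph{every} $i \in \{1,\dots,C\}$ and every stage (again using $\epsilon = 0$ at the head). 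This one termwise cancellation simultaneously kills the right-hand sides of~\eqref{eq:comp-mole,x-sum-differentiation-perturbed} for $i \le C-1$ and the fully summed expressions~\eqref{eq:aux-alg-eq,x-sum-differentiation}.

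The one place where cancellation is not termwise, and hence the main obstacle, is the enthalpy balances~\eqref{eq:enth,x-sum-differentiation-perturbed}. Here I would substitute $L^j = V^j$, $\bm{x}^{j+1} = \bm{y}^j$, and $\bm{x}^j = \bm{y}^{j-1}$, rewriting the $j$th right-hand side as $V^j\big(\fhl(T^{j+1},\bm{y}^j) - \fhv(T^j,\bm{y}^j)\big) - V^{j-1}\big(\fhl(T^j,\bm{y}^{j-1}) - \fhv(T^{j-1},\bm{y}^{j-1})\big)$, which vanishes precisely by the recursive definition~\eqref{eq:inner-algo-V} of $V^j$; the reboiler stage uses instead the closed form $V^1 = -Q_0/(\fhl(T^2,\bm{y}^1) - \fhv(T^1,\bm{y}^1))$ to absorb the $Q_0$ term, and the head stage uses the identification $T^{S+1} := \Tcond_0$ so that the condenser term $\fhl(\Tcond_0,\bm{y}^S)$ matches the $\fhl(T^{S+1},\bm{y}^S)$ appearing in~\eqref{eq:inner-algo-V}. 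Having checked every equation, the constant map is a solution of~\eqref{eq:ode,intermediary}-\eqref{eq:alg,intermediary}; alternatively, since $n^j \ne 0$, one may verify the equivalent system~\eqref{eq:tot-mole,tot-and-modif-comp-mole-balance}-\eqref{eq:holdup,tot-and-modif-comp-mole-balance} and then invoke Proposition~\ref{prop:equivalence-of-modified-basic-and-unperturbed-system}.
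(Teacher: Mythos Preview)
Your proposal is correct. The route you take differs slightly from the paper's: the paper first observes that the constant map satisfies the \emph{original} basic system (component mole balances~\eqref{eq:comp-mole,original} together with the vapor summation equations), establishes $V^j,n^j\ne 0$ exactly as you do, and then transfers to the intermediary unperturbed system by invoking the equivalence Propositions~\ref{prop:equivalence-of-basic-and-modified-basic-system} and~\ref{prop:equivalence-of-modified-basic-and-unperturbed-system}. You instead verify the intermediary system~\eqref{eq:tot-mole,x-sum-differentiation-perturbed}--\eqref{eq:aux-alg-eq,x-sum-differentiation-perturbed} directly (and even mention the paper's detour as an alternative at the end). The underlying computations are identical; your direct check trades the short appeal to two equivalence results for an explicit verification of the auxiliary algebraic equations~\eqref{eq:aux-alg-eq,x-sum-differentiation}, and has the small advantage that it sidesteps the hypothesis $\epsilon(t)\ne 0$ in Proposition~\ref{prop:equivalence-of-basic-and-modified-basic-system}, which is formally not met here since $\epsilon\equiv 0$ (the paper only needs the direction of that equivalence for which $\epsilon\ne 0$ is not actually used).
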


\begin{proof}
It immediately follows from~\eqref{eq:inner-algo-ysum-and-ydef}-\eqref{eq:inner-algo-n-and-H} that $t \mapsto (\xi(t),\eta(t))$ is a solution to the original system equations (as defined in Proposition~\ref{prop:equivalence-of-basic-and-modified-basic-system}) with $\epsilon \equiv 0$ and $\hat{u} \equiv (P_0,Q_0,\Tcond_0)$. Since $Q_0, n^1 \ne 0$ and $\fhold(L) \ne 0$ for $L \ne 0$ by assumption, it further follows by~\eqref{eq:inner-algo-V}, (\ref{eq:inner-algo-n-and-H}a), (\ref{eq:inner-algo-x-and-L}b) that $V^j(t) \ne 0$ and $n^j(t) \ne 0$ for all $j \in \{1,\dots,S\}$. So, by Proposition~\ref{prop:equivalence-of-basic-and-modified-basic-system} and~\ref{prop:equivalence-of-modified-basic-and-unperturbed-system}, the constant map $t \mapsto (\xi(t),\eta(t))$ is also a solution to the unperturbed system~\eqref{eq:ode,intermediary}-\eqref{eq:alg,intermediary}, as desired.
\end{proof}

We close with a few remarks on the well-definedness condition~\eqref{eq:inner-algo-well-defined} or~(\ref{eq:inner-algo-assumption}b) for Algorithm~\ref{alg:inner-loop}. At least with our choices of the thermodynamic submodels (\ref{app:submodels}), this well-definedness condition is very natural. Indeed, with our molar enthalpy models~\eqref{eq:fhv-and-fhl}-\eqref{eq:hvi}, we have
\begin{align}
\fhl(T^{j+1},\bm{y}^j) - \fhv(T^j,\bm{y}^j) 
= -\bm{y}^j \cdot \bigg( \int_{T^{j+1}}^{T^j} \cli(T) \mathrm{d} T + \hvapi(T^j) \bigg)_{i \in \{1,\dots,C\}}.
\end{align}
And therefore, the well-definedness condition~(\ref{eq:inner-algo-assumption}b) is satisfied if, for instance, the following four conditions are all satisfied:
\begin{itemize}
\item[(i)] $y_i^j \ge 0$ for every $i \in \{1,\dots,C\}$, that is, our vapor-liquid equilibrium models $\fvlei$ predict non-negative vapor mole fractions at $(P_0,T^j,\bm{x}^j)$
\item[(ii)] $T^j > T^{j+1}$, that is the boiling temperature on stage $j$ is larger than on stage $j+1$ above
\item[(iii)] $\cli(T) > 0$ for every $T \in [T^{j+1},T^j]$ and $i \in \{1,\dots,C\}$, that is, the liquid heat capacity of each component is larger than $0$ on the temperature range $[T^{j+1},T^j]$ 
\item[(iv)] $\hvapi(T^j) > 0$, that is, our vaporization enthalpy models $\hvapi$ predict positive vaporization enthalpies for all components at $T^j$. 
\end{itemize}
Clearly, these conditions are very natural. Indeed, if any of these conditions was violated, this would indicate that our submodels are unphysical and thus inappropriate.
\color{black}

\section{Supplementary information on the case study}

\subsection{Satisfaction of the index-reduction conditions}
\label{app:satisfaction-of-index-reduction-conditions}

\revtext{In this section we show that the liquid molar heat capacity $\cli(T)$ of the components used in Section~\ref{sec:study} is always greater than zero in the relevant temperature range. The condition $\cli(T) > 0$ is necessary in order for our infinite reflux algorithm to work (refer to \ref{app:infinite-reflux-initialization}.)}
\revtext{This is shown in Figures~\ref{fig:cp_mixture1_2} and \ref{fig:cp_mixture_3}.}

\begin{figure}[H]
	\centering
	\makebox[\textwidth]{\includegraphics[width=0.9\textwidth]{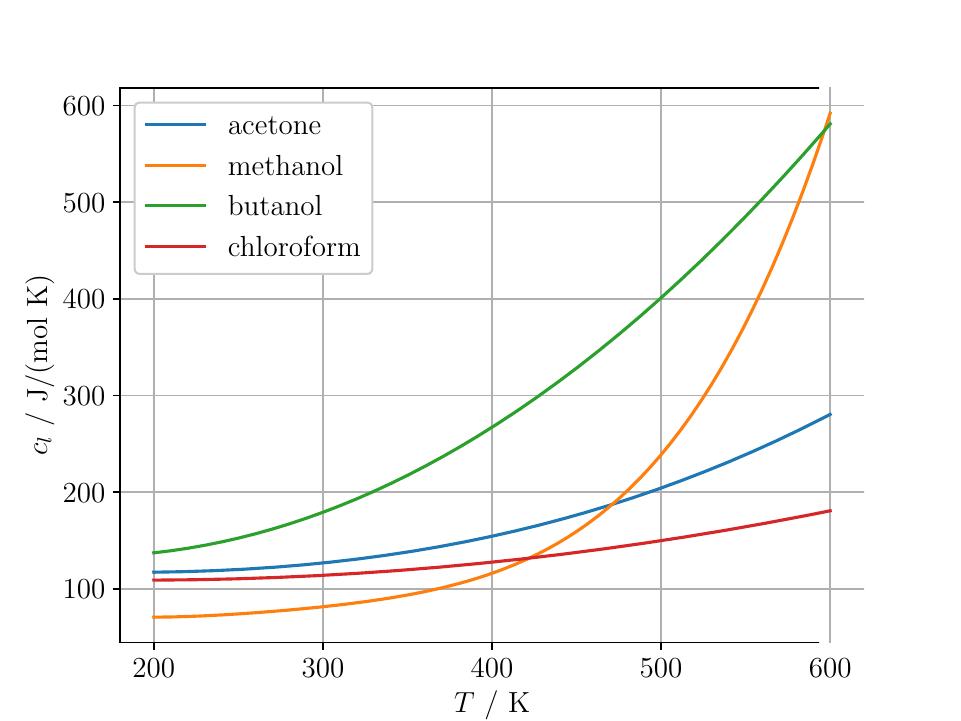}}
	\caption{\revtext{Liquid molar heat capacity $\cli$ of components acetone (1), methanol (2), butanol (3), and chloroform (4)}}
	\label{fig:cp_mixture1_2}
\end{figure}

\begin{figure}[H]
	\centering
	\makebox[\textwidth]{\includegraphics[width=0.9\textwidth]{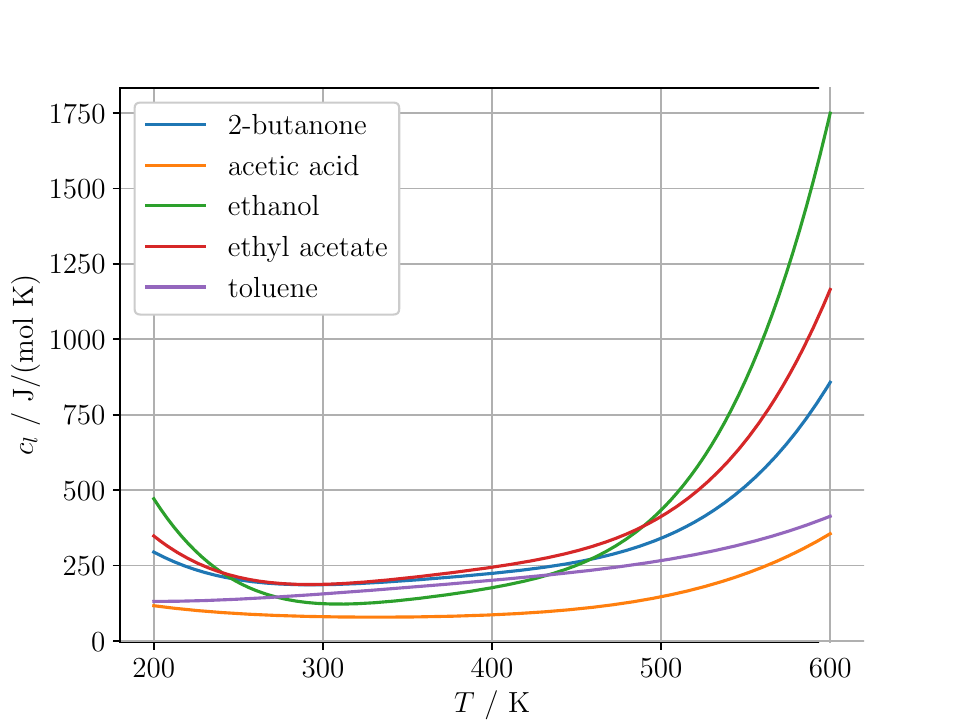}}
	\caption{\revtext{Liquid molar heat capacity $\cli$ of components 2-butanone (1), acetic acid (2), ethanol (3), ethyl acetate (4), toluene (5)}}
	\label{fig:cp_mixture_3}
\end{figure}

\subsection{Satisfaction of the vapor summation equations}
\label{app:satisfaction-of-vapor-summation-equation}

\begin{figure}[H]
	\centering
	\makebox[\textwidth]{\includegraphics[width=0.9\textwidth]{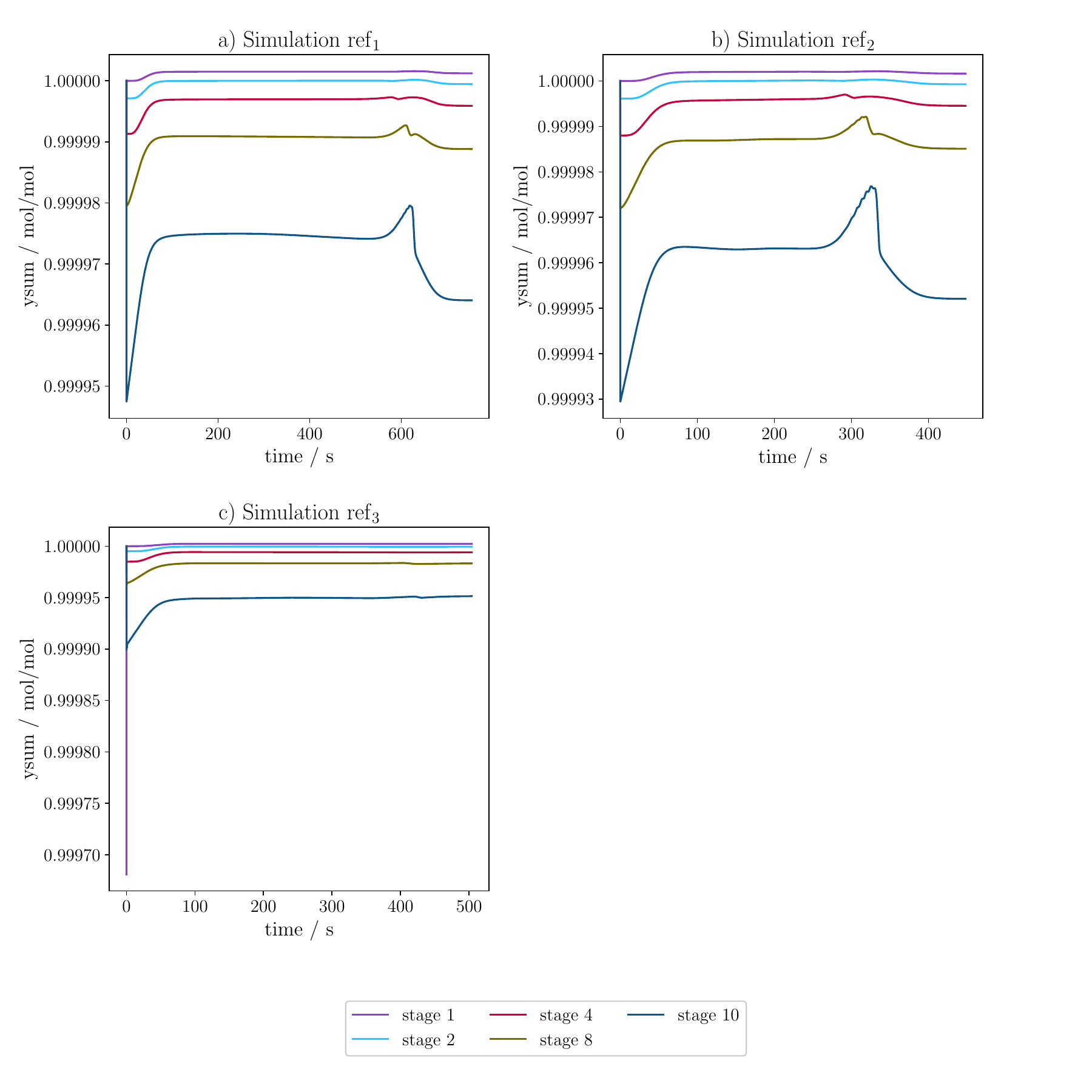}}
	\caption{\revtext{Summation of vapor mole fractions $\mathrm{ysum}$ on stages 1, 2, 4, 8, and 10; the DAE system that was used in the simulation is \eqref{eq:ode,perturbed}-\eqref{eq:alg,perturbed}; $\mathrm{ysum}$ is shown for the simulations $\mathrm{ref}_1$, $\mathrm{ref}_2$, and $\mathrm{ref}_3$ in a), b), and c) respectively}}
	\label{fig:vapor_summation_satisfaction}
\end{figure}

\subsection{Additional results for the binary submixtures} \label{app:additional_results_binary}
In the following we show the simulation results of the binary submixtures

\begin{itemize}
    \item acetone (1), methanol (2) (simulation $\text{ref}_4$ with $\prefidx{4} = \theta_\text{ref}^{\{1,2\}}$, and $\text{alt}_4$ with $\theta_{\text{alt}_4}=\palt^{\{1,2\}}$),
    \item acetone (1), chloroform (4) (simulation $\text{ref}_5$ with $\prefidx{5} = \theta_\text{ref}^{\{1,4\}}$, and $\text{alt}_5$ with $\theta_{\text{alt}_5}=\palt^{\{1,5\}}$),
    \item acetone (1), butanol (3) (simulation $\text{ref}_6$ with $\prefidx{6} = \theta_\text{ref}^{\{1,3\}}$, and $\text{alt}_6$ with $\theta_{\text{alt}_6}=\palt^{\{1,3\}}$),
    \item methanol (2), butanol (3) (simulation $\text{ref}_7$ with $\prefidx{7} = \theta_\text{ref}^{\{2,3\}}$, and $\text{alt}_7$ with $\theta_{\text{alt}_7}=\palt^{\{2,3\}}$),
    \item acetic acid (2), ethanol (3) (simulation $\text{ref}_8$ with $\prefidx{8} = \theta_\text{ref}^{\{2,3\}}$, and $\text{alt}_8$ with $\theta_{\text{alt}_8}=\palt^{\{2,3\}}$). 
\end{itemize}

\begin{figure}[H]
	\centering
	\makebox[\textwidth]{\includegraphics[width=\textwidth]{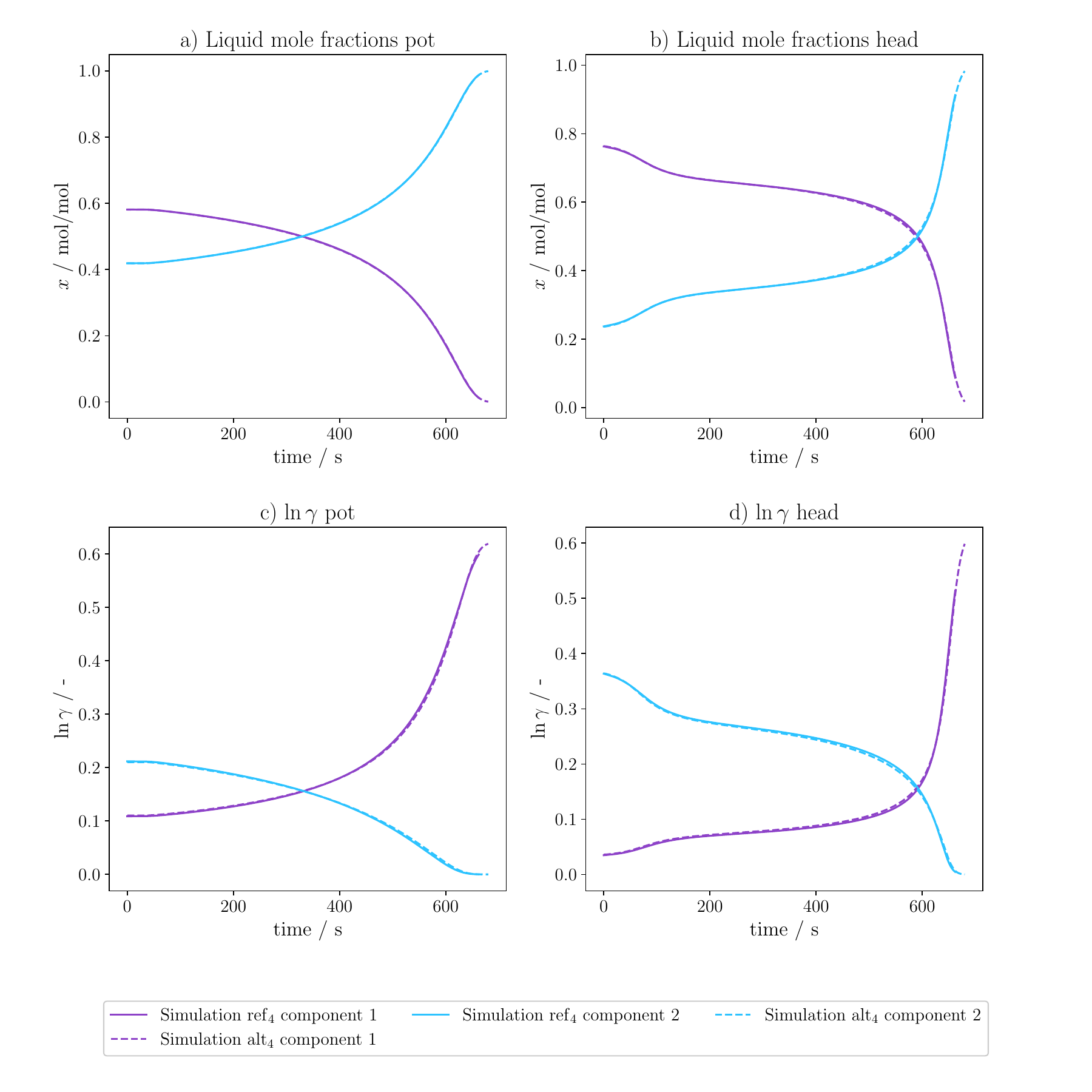}}
	\caption{Simulation results for mixture acetone (1), methanol (2) with initial composition $\bm{x}^{\text{app},0} = (0.6,0.4)$ \SI{}{\mole \per \mole}; liquid mole fractions $\bm{x}$ for pot and head stage are shown in a) and b), and activity coefficients $\bm{\gamma}$ for pot and head stage are shown in c) and d)}
	\label{fig:dynamic_diagram_acetone_methanol}
\end{figure}

\begin{figure}[H]
	\centering
	\makebox[\textwidth]{\includegraphics[width=\textwidth]{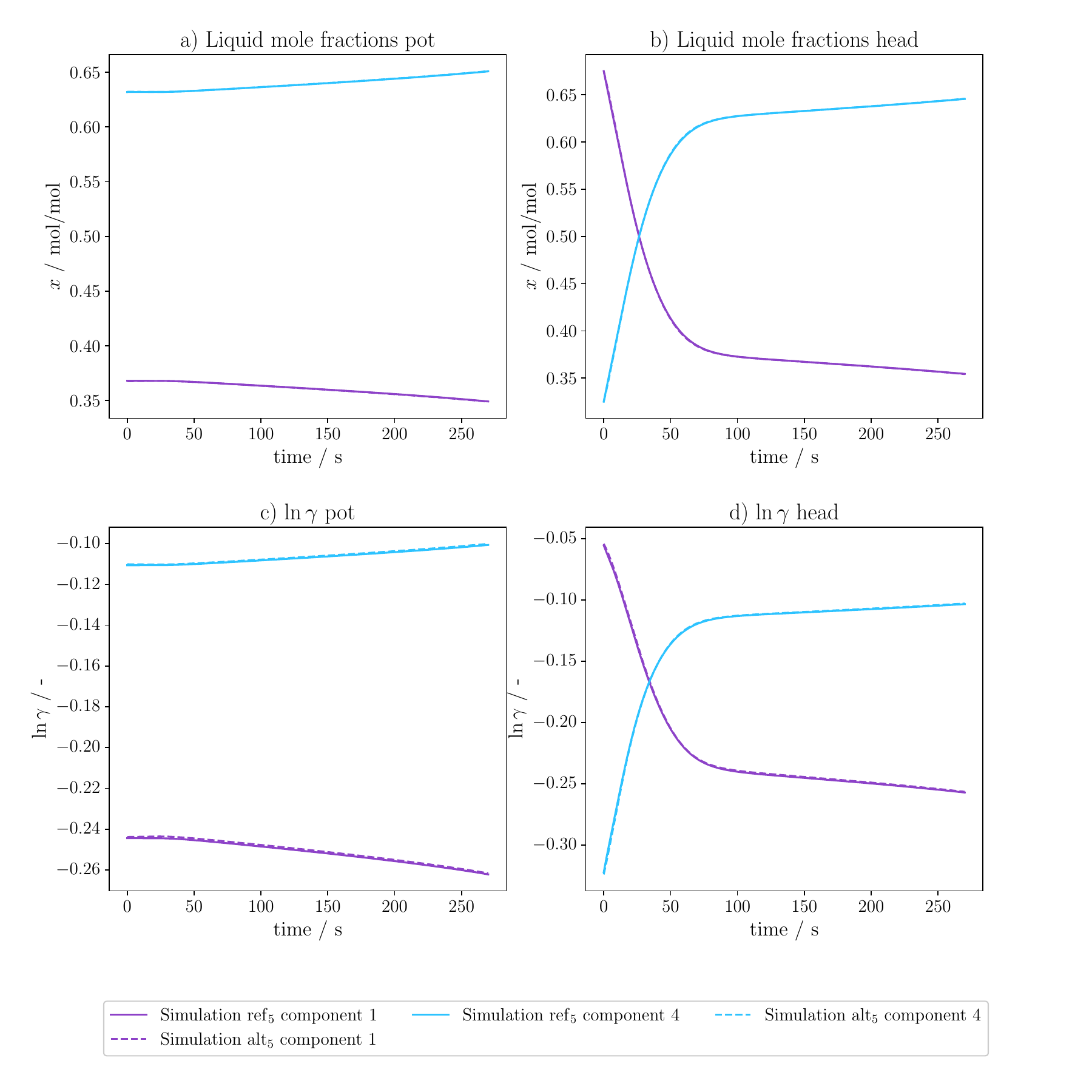}}
	\caption{Simulation results for mixture acetone (1), chloroform (4) with initial composition $\bm{x}^{\text{app},0} = (0.4,0.6)$ \SI{}{\mole \per \mole}; liquid mole fractions $\bm{x}$ for pot and head stage are shown in a) and b), and activity coefficients $\bm{\gamma}$ for pot and head stage are shown in c) and d)}
	\label{fig:dynamic_diagram_acetone_chloroform}
\end{figure}

\begin{figure}[H]
	\centering
	\makebox[\textwidth]{\includegraphics[width=\textwidth]{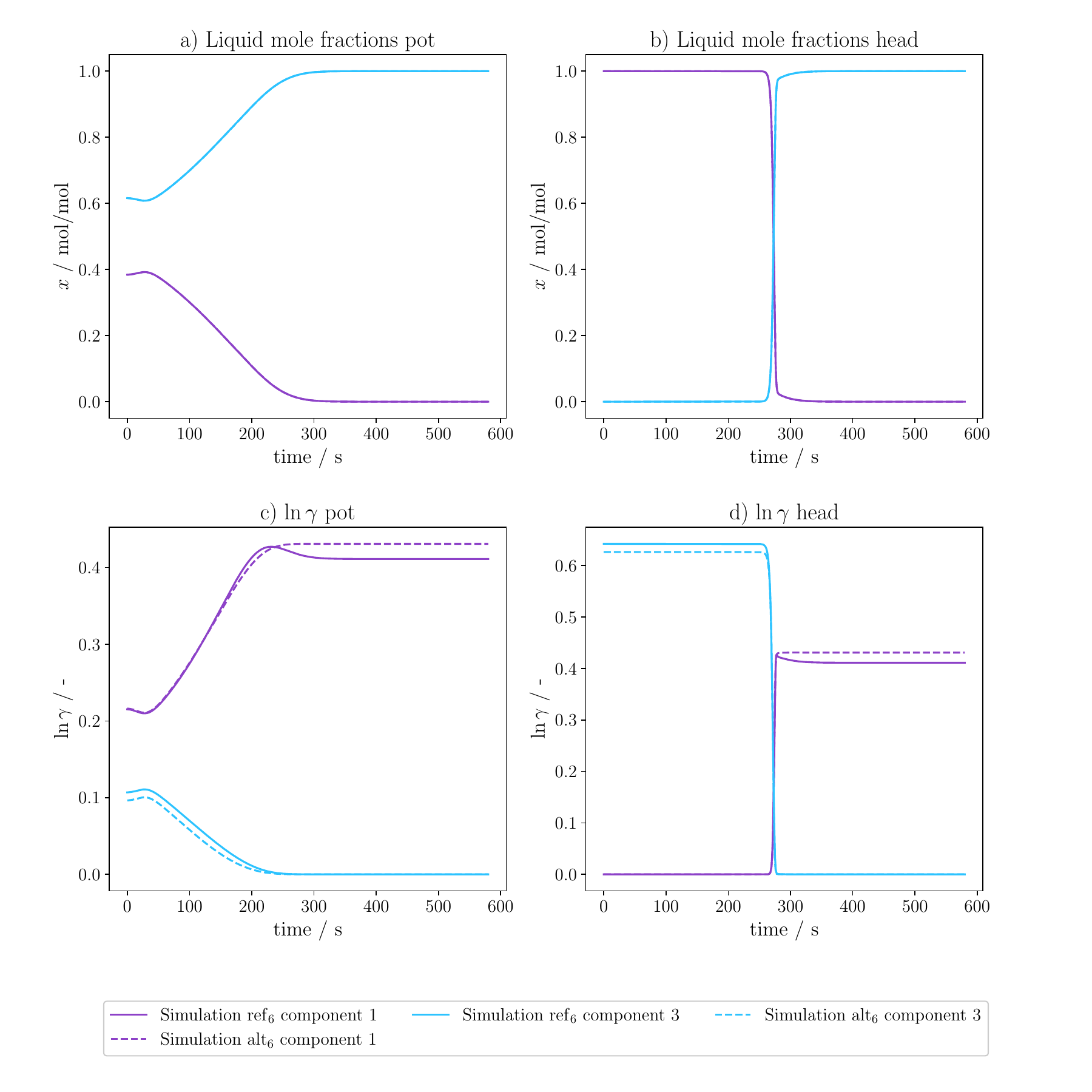}}
	\caption{Simulation results for mixture acetone (1), butanol (3) with initial composition $\bm{x}^{\text{app},0} = (0.5,0.5)$ \SI{}{\mole \per \mole}; liquid mole fractions $\bm{x}$ for pot and head stage are shown in a) and b), and activity coefficients $\bm{\gamma}$ for pot and head stage are shown in c) and d)}
	\label{fig:dynamic_diagram_acetone_butanol}
\end{figure}

\begin{figure}[H]
	\centering
	\makebox[\textwidth]{\includegraphics[width=\textwidth]{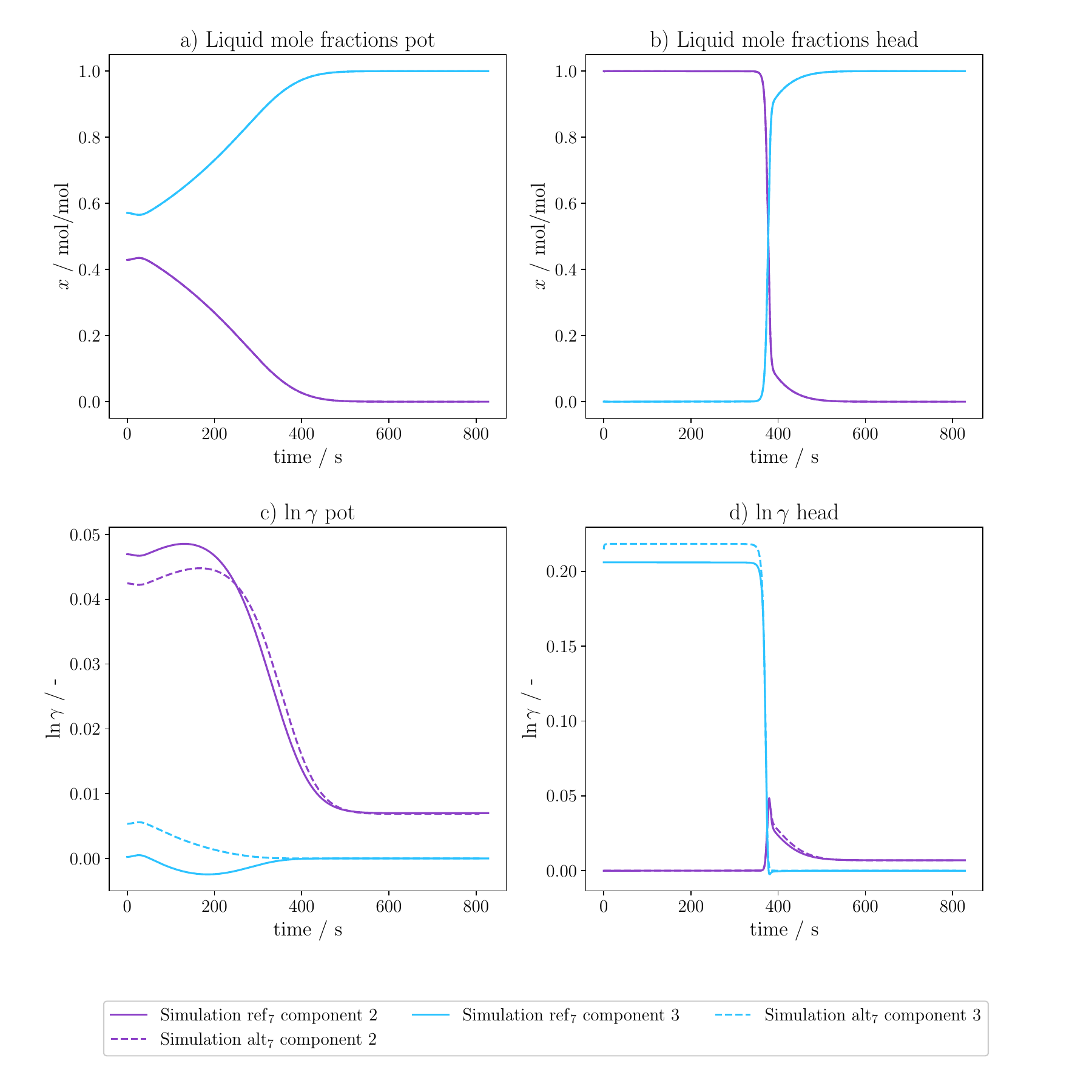}}
	\caption{Simulation results for mixture methanol (2), butanol (3) with initial composition $\bm{x}^{\text{app},0} = (0.5,0.5)$ \SI{}{\mole \per \mole}; liquid mole fractions $\bm{x}$ for pot and head stage are shown in a) and b), and activity coefficients $\bm{\gamma}$ for pot and head stage are shown in c) and d)}
	\label{fig:dynamic_diagram_methanol_butanol}
\end{figure}

\begin{figure}[H]
	\centering
	\makebox[\textwidth]{\includegraphics[width=\textwidth]{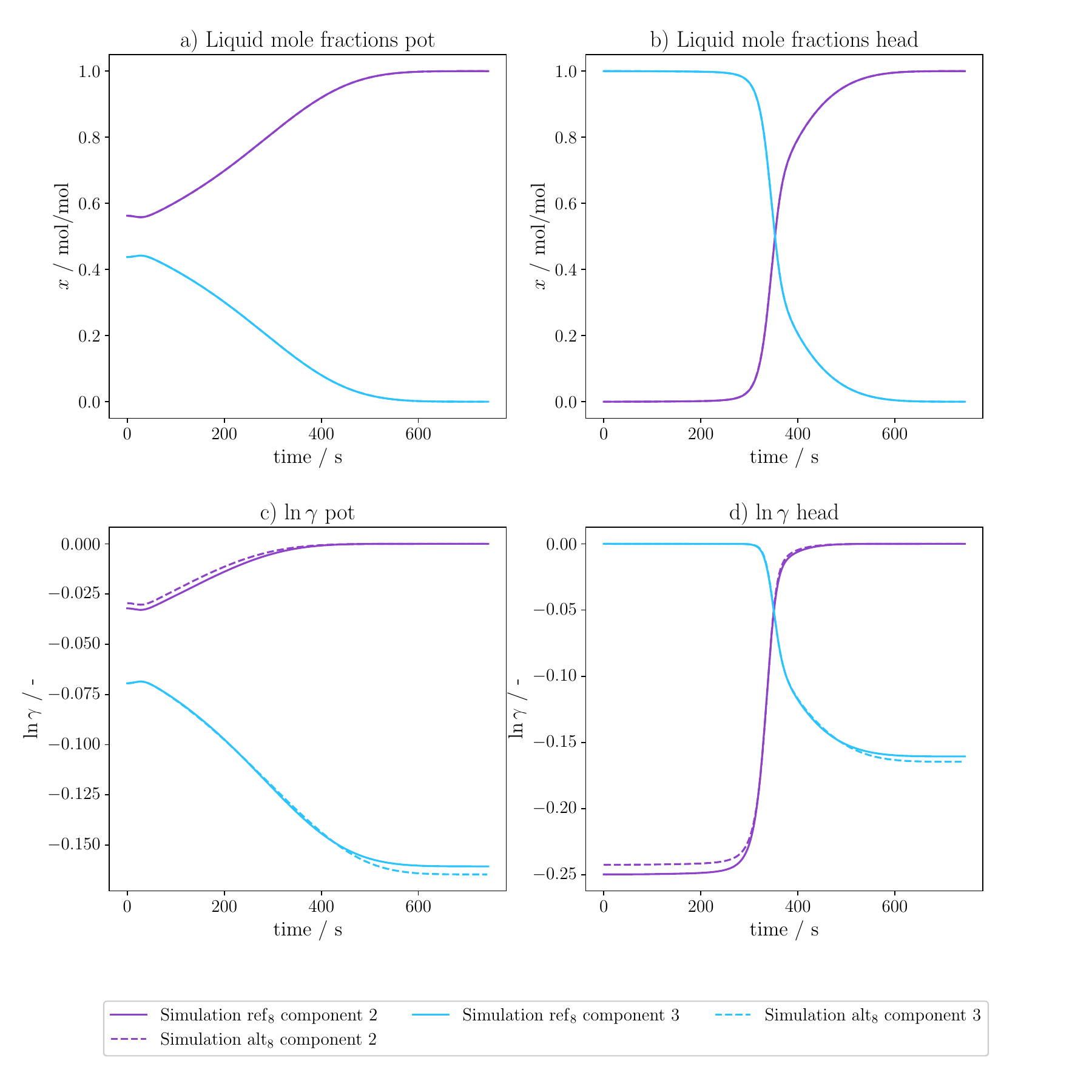}}
	\caption{Simulation results for mixture acetic acid (2), ethanol (3) with initial composition $\bm{x}^{\text{app},0} = (0.5,0.5)$ \SI{}{\mole \per \mole}; liquid mole fractions $\bm{x}$ for pot and head stage are shown in a) and b), and activity coefficients $\bm{\gamma}$ for pot and head stage are shown in c) and d)}
	\label{fig:dynamic_diagram_aceticAcid_ethanol}
\end{figure}

\subsection{Additional results for the three-component mixture} \label{app:additional_results_mixture1}

\begin{figure}[H]
	\centering
	\makebox[\textwidth]{\includegraphics[width=\textwidth]{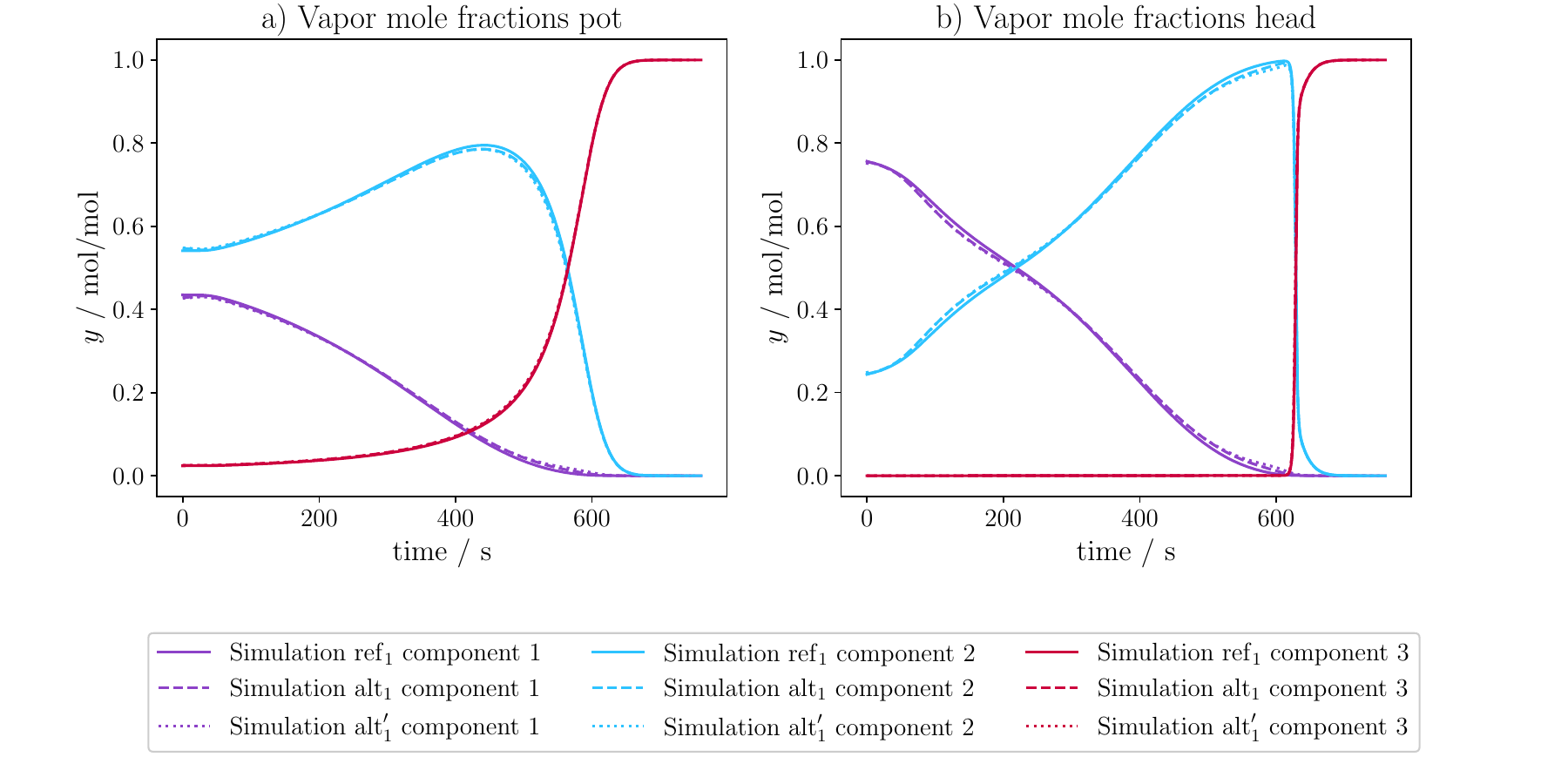}}
	\caption{Simulation results for mixture 1 acetone (1), methanol (2), and butanol (3); vapor mole fractions $\bm{y}$ for pot and head shown in a) and b)}
	\label{fig:dynamic_diagram_example1_vapor}
\end{figure}

\begin{figure}[H]
	\centering
	\makebox[\textwidth]{\includegraphics[width=\textwidth]{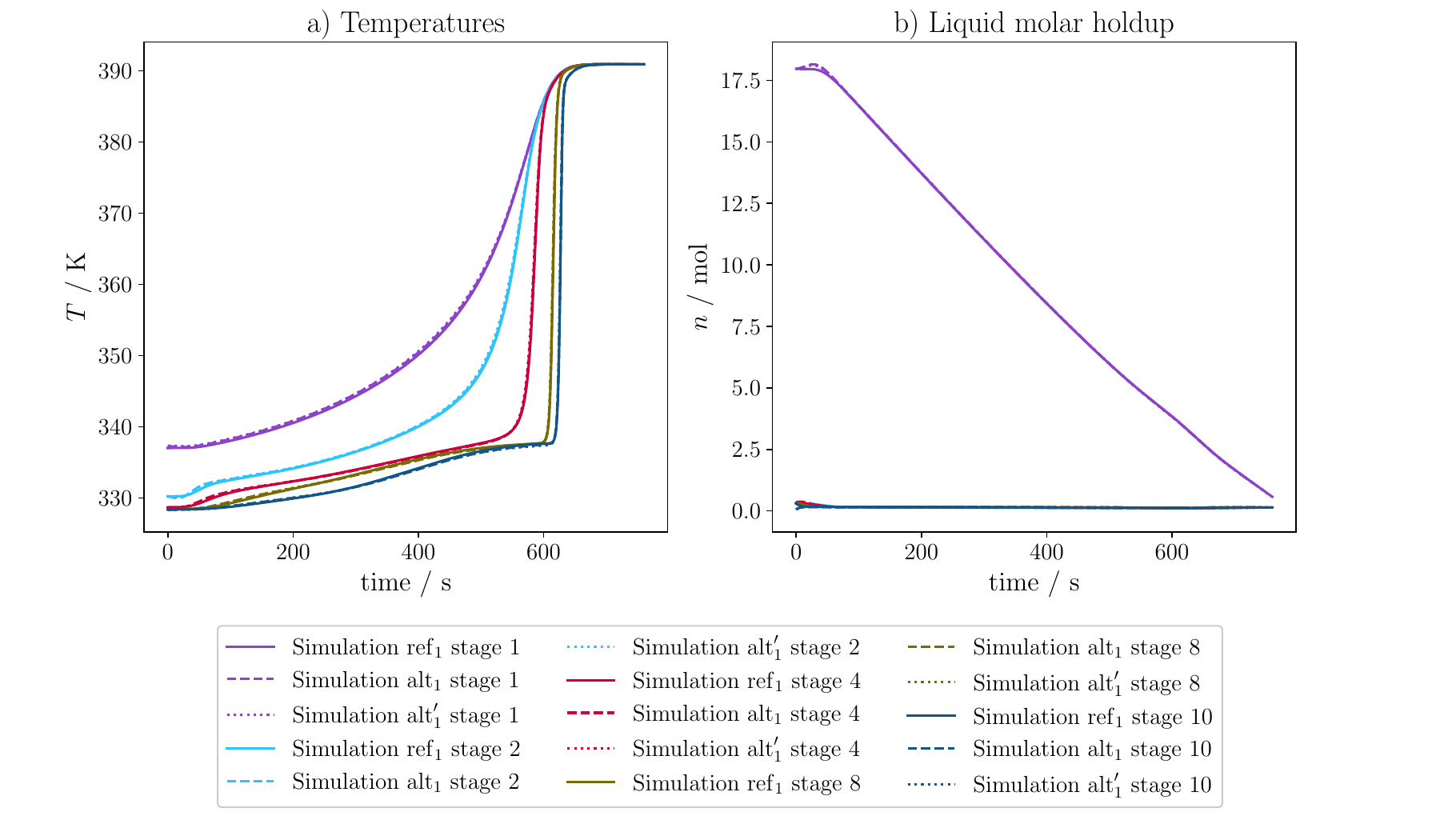}}
	\caption{Simulation results for mixture 1 acetone (1), methanol (2), and butanol (3); a) temperatures $T$ and b) molar liquid holdup $n$ on all stages}
	\label{fig:dynamic_diagram_example1_temp_n}
\end{figure}

\begin{figure}[H]
	\centering
	\makebox[\textwidth]{\includegraphics[width=\textwidth]{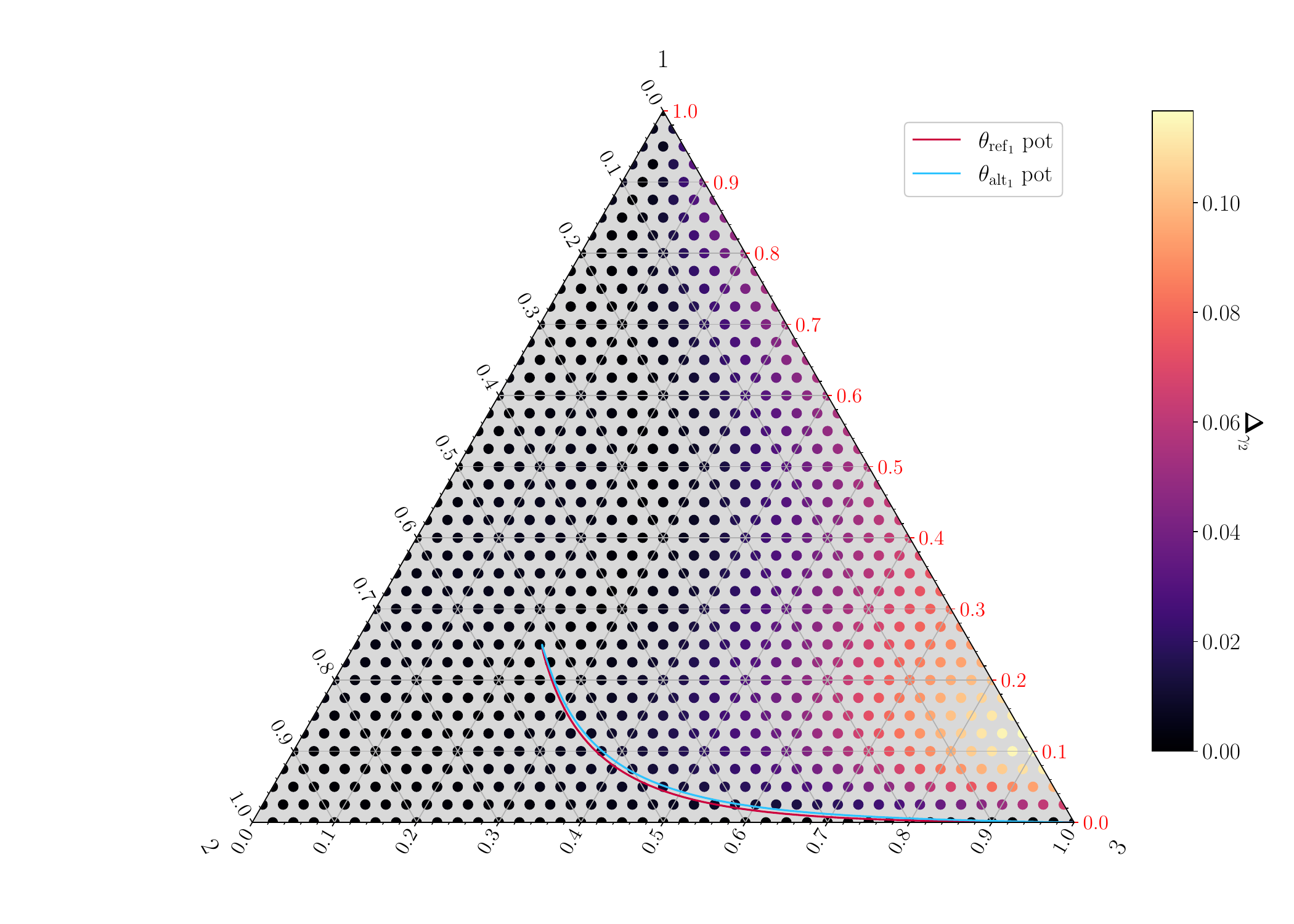}}
	\caption{Ternary diagram of mixture acetone (1), methanol (2), and butanol (3); the metric $\bm{\Delta}_{\gamma_2}$ is plotted as a heatmap within the ternary diagram; the liquid mole fraction trajectories of the pot stage for simulations 1 and $\text{alt}_1$ are visualized by the red and blue line-plots; the red-marked axis indicates that a high $\bm{\Delta}_{\gamma_2}$ can be expected for the binary sub-system (1)-(3)}
	\label{fig:ternary_plots_example1_2}
\end{figure}

\begin{figure}[H]
	\centering
	\makebox[\textwidth]{\includegraphics[width=\textwidth]{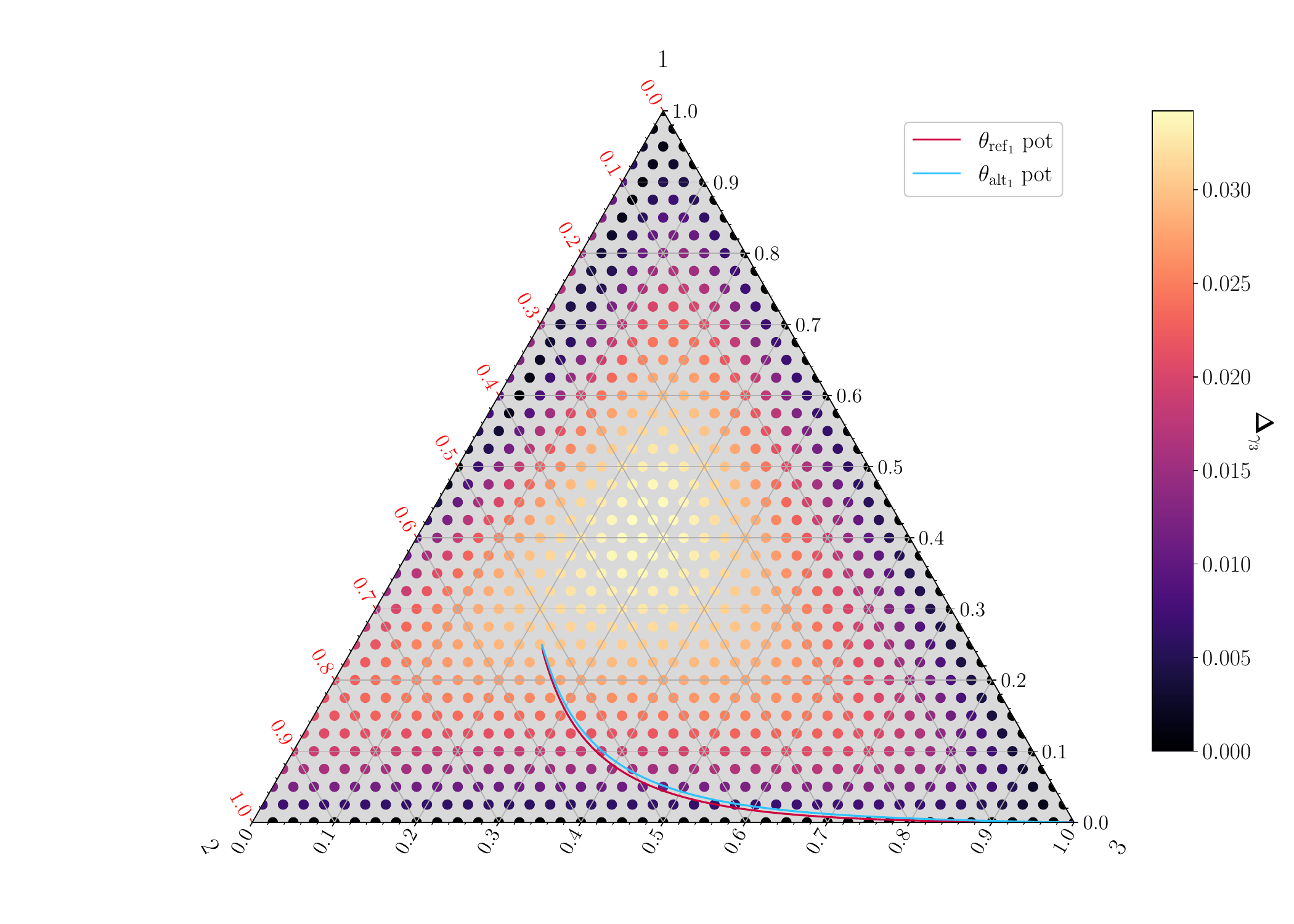}}
	\caption{Ternary diagram of mixture acetone (1), methanol (2), and butanol (3); the metric $\bm{\Delta}_{\gamma_3}$ is plotted as a heatmap within the ternary diagram; the liquid mole fraction trajectories of the pot stage for simulations 1 and $\text{alt}_1$ are visualized by the red and blue line-plots; the red-marked axis indicates that a high $\bm{\Delta}_{\gamma_3}$ can be expected for the binary sub-system (1)-(2)}
	\label{fig:ternary_plots_example1_3}
\end{figure}

\subsection{Additional results for the four-component mixture} \label{app:additional_restults_mixture2}

\begin{figure}[H]
	\centering
	\makebox[\textwidth]{\includegraphics[width=\textwidth]{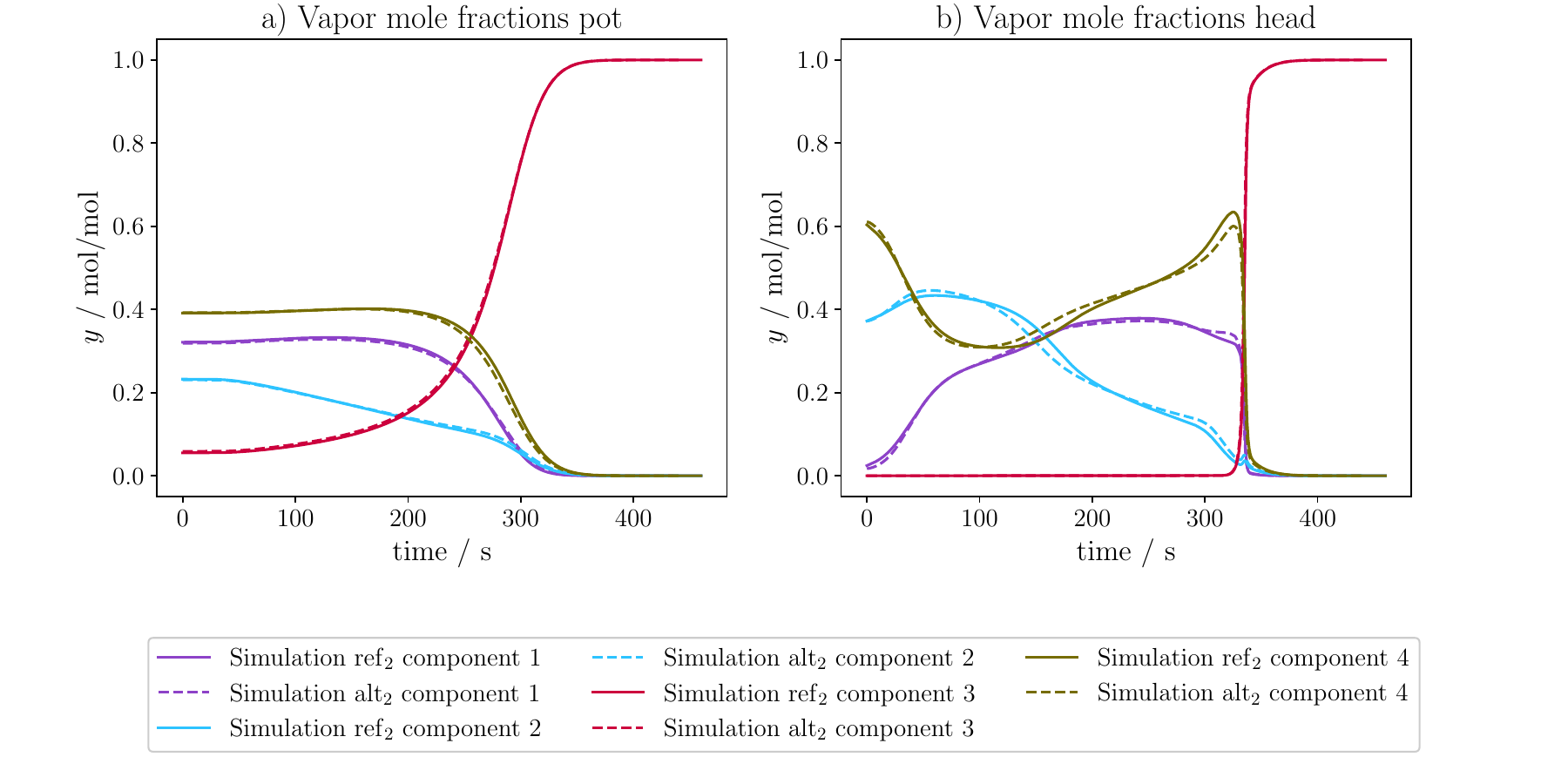}}
	\caption{Simulation results for mixture 2 acetone (1), methanol (2), butanol (3), and chloroform (4); vapor mole fractions $\bm{y}$ for pot and head shown in a) and b)}
	\label{fig:dynamic_diagram_example2_vapor}
\end{figure}

\begin{figure}[H]
	\centering
	\makebox[\textwidth]{\includegraphics[width=\textwidth]{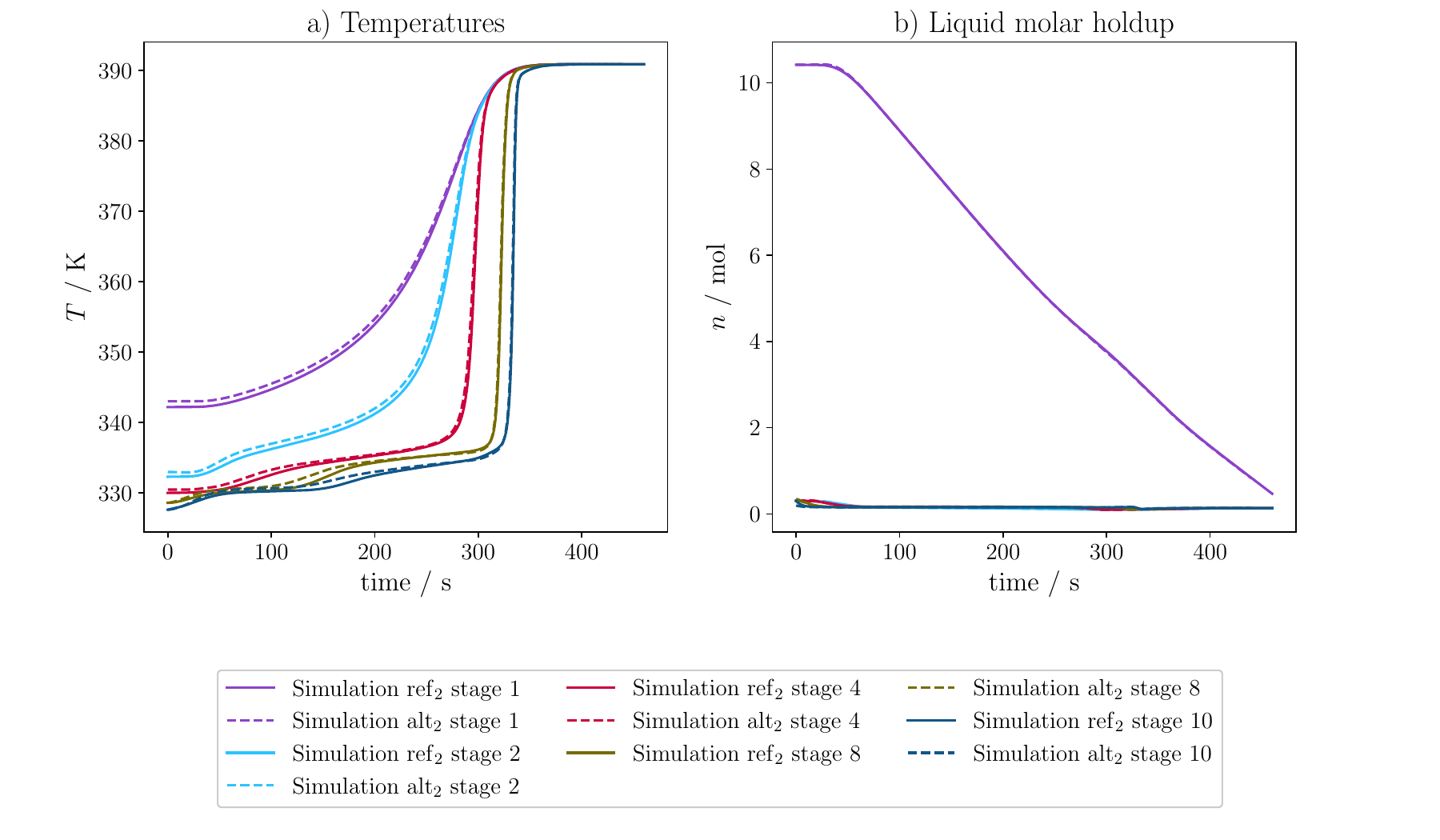}}
	\caption{Simulation results for mixture 2 acetone (1), methanol (2), butanol (3), and chloroform (4); a) temperatures $T$ and b) molar liquid holdup $n$ on all stages}
	\label{fig:dynamic_diagram_example2_temp_n}
\end{figure}

\subsection{Additional results for the five-component mixture} \label{app:additional_results_mixture3}

\begin{figure}[H]
	\centering
	\makebox[\textwidth]{\includegraphics[width=\textwidth]{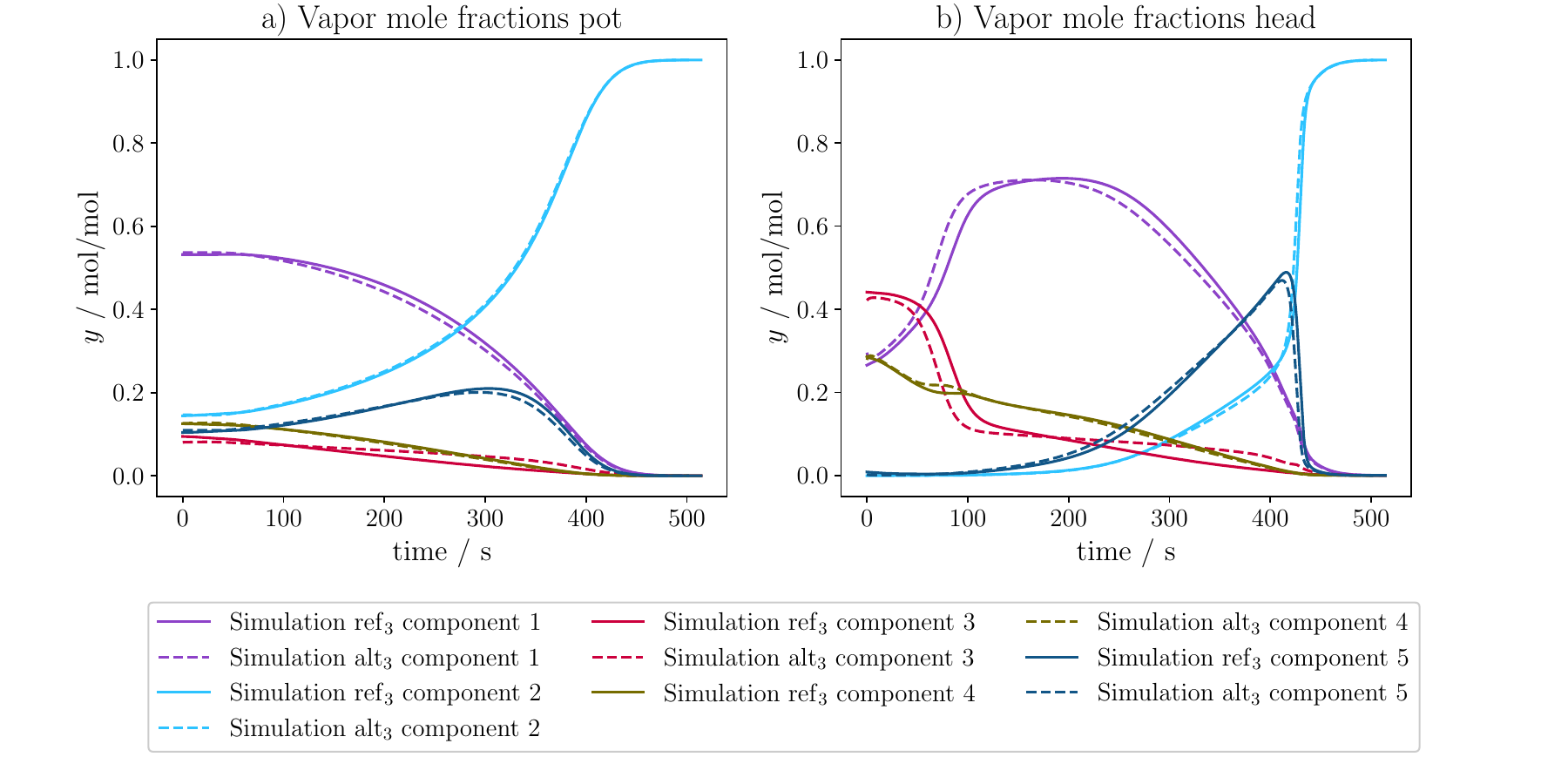}}
	\caption{Simulation results for mixture 2 2-butanone (1), acetic acid (2), ethanol (3), ethyl acetate (4) and toluene (5); vapor mole fractions $\bm{y}$ for pot and head shown in a) and b)}
	\label{fig:dynamic_diagram_example3_vapor}
\end{figure}

\begin{figure}[H]
	\centering
	\makebox[\textwidth]{\includegraphics[width=\textwidth]{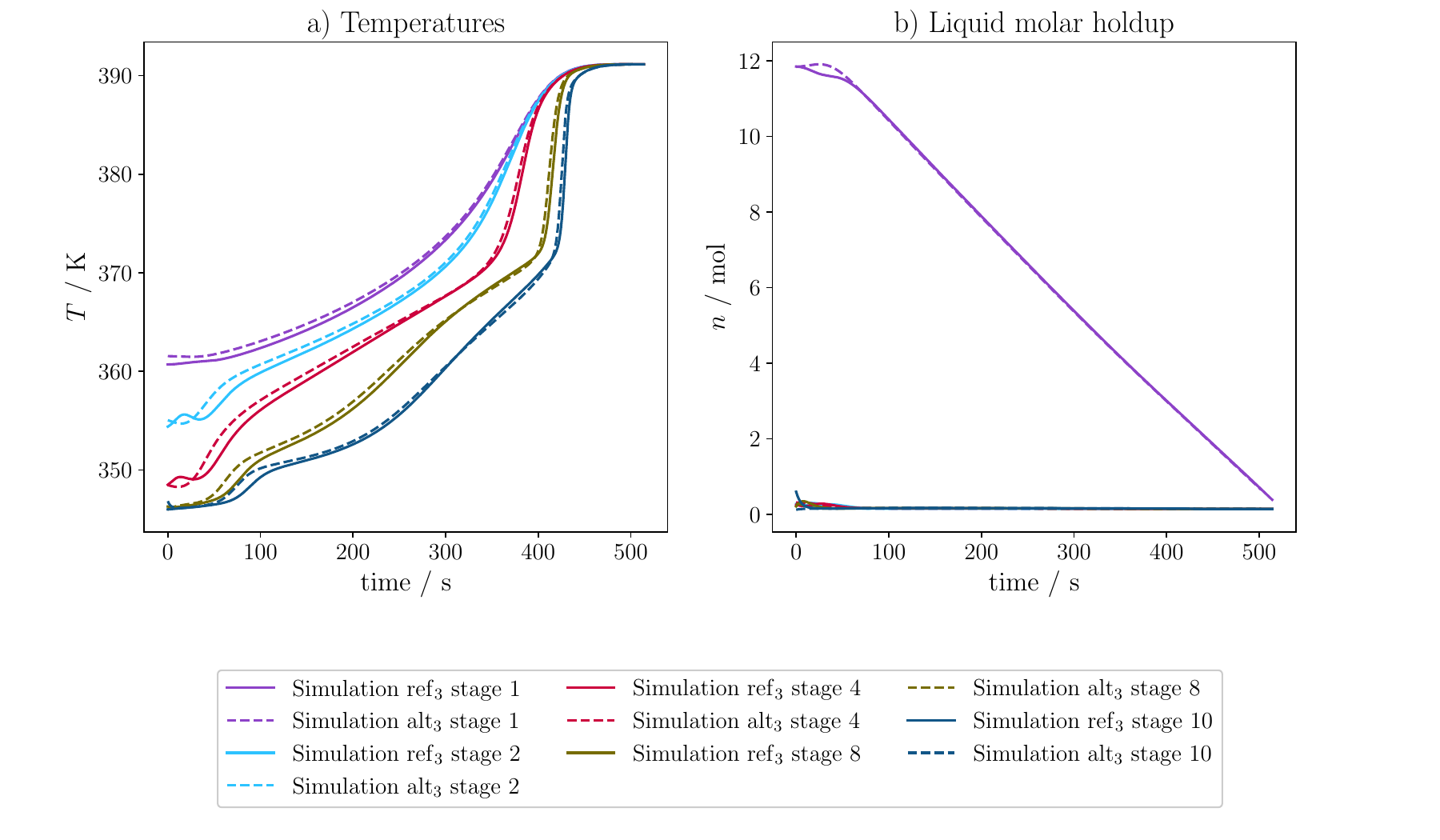}}
	\caption{Simulation results for mixture 2 2-butanone (1), acetic acid (2), ethanol (3), ethyl acetate (4) and toluene (5); a) temperatures $T$ and b) molar liquid holdup $n$ on all stages}
	\label{fig:dynamic_diagram_example3_temp_n}
\end{figure}


\end{document}